\documentclass[12pt]{amsart}

\usepackage[utf8]{inputenc}
\usepackage{float}
\usepackage{aliascnt}
\usepackage{enumitem}
\usepackage{mathtools}
\usepackage{mathrsfs}
\usepackage{bbm}
\usepackage{amsmath,amssymb,amsthm,bm}
\usepackage[top    = 25mm,  
            bottom = 25mm, 
            right  = 25mm, 
            left   = 25mm]{geometry}
\usepackage{graphicx}
\usepackage{subcaption}
\usepackage{url}
\usepackage{tikz}
\usepackage{pgfplots} 
\pgfplotsset{compat=newest} 
\usetikzlibrary{intersections, pgfplots.fillbetween}
\pgfdeclarelayer{pre main}
\usepackage[colorlinks=true,
            linkcolor=blue,
            filecolor=blue,
            citecolor=red,
            urlcolor=blue, 
            colorlinks=true]{hyperref}
\usepackage[capitalize,noabbrev]{cleveref}
\usepackage{extpfeil}
\usepackage{tikz}
\usepackage{tikz-cd}
\usetikzlibrary{patterns}

\newtheorem{theorem}{Theorem}[section]

\newaliascnt{proposition}{theorem}
\newtheorem{proposition}[proposition]{Proposition}
\aliascntresetthe{proposition}

\newaliascnt{lemma}{theorem}
\newtheorem{lemma}[lemma]{Lemma}
\aliascntresetthe{lemma}

\newaliascnt{corollary}{theorem}

\aliascntresetthe{corollary}

\theoremstyle{definition}

\newaliascnt{definition}{theorem}
\newtheorem{definition}[definition]{Definition}
\aliascntresetthe{definition}

\newaliascnt{problem}{theorem}
\newtheorem{problem}[problem]{Problem}
\aliascntresetthe{problem}

\newaliascnt{example}{theorem}
\newtheorem{example}[example]{Example}
\aliascntresetthe{example}

\newaliascnt{remark}{theorem}
\newtheorem{remark}[remark]{Remark}
\aliascntresetthe{remark}

\newcommand{\Span}{\mathrm{span}}

\newcommand{\Mat}{\mathrm{Mat}}

\newcommand{\PP}{\mathbb{P}}

\newcommand{\R}{\mathbb{R}}
\newcommand{\C}{\mathbb{C}}

\newcommand{\OGr}{\mathrm{OGr}}
\newcommand{\Gr}{\mathrm{Gr}}
\newcommand{\sech}{\mathrm{sech}}

\newcommand{\disk}{\mathbb{D}}

\newcommand{\wt}{{\mathrm{wt}}}
\newcommand{\Rpos}{\mathbb{R}_{>0}}

\newcommand{\rt}{\vec{\tau}_{\mathscr{M}}}
\newcommand{\lvec}[1]{\reflectbox{$\vec{\reflectbox{$#1$}}$}}
\newcommand{\lt}{\lvec{\tau}_{\mathscr{M}}}

\tikzset{qvert/.style={draw,black,circle,fill=gray,minimum size=5pt,inner sep=0pt}  } 
\tikzset{bvert/.style={draw,circle,fill=black,minimum size=5pt,inner sep=0pt}  }  
\tikzset{gbvert/.style={draw, gray, circle,fill=gray,minimum size=5pt,inner sep=0pt}  } 
\tikzset{gvert/.style={draw,gray,circle,fill=white,minimum size=5pt,inner sep=0pt}  } 
\tikzset{wvert/.style={draw,circle,fill=white,minimum size=5pt,inner sep=0pt}  } 
\tikzset{fvert/.style={text=MidnightBlue}  } 
\tikzset{sqvert/.style={draw,black,rectangle,fill=black,minimum size=5pt,inner sep=0pt}  } 
\tikzset{lvert/.style={draw,circle,fill=black,minimum size=4pt,inner sep=0pt}  }  
\usetikzlibrary{arrows}
\tikzcdset{arrow style=tikz, diagrams={>={Stealth[round,length=4pt,width=4.95pt,inset=2.75pt]}}}
\tikzset{nvert/.style={draw,circle,fill=black,minimum size=3pt,inner sep=0pt}  } 
\usetikzlibrary{decorations.markings}
\tikzset{>=latex}
\def\ep{0.16}
\def\rc{7}
\def\lw{1.1pt}

\title{$C$-networks and the planar Ising inverse problem}

\author[Yassine Elmaazouz]{Yassine {Elmaazouz}} 

\address{Yassine {ElMaazouz} (Caltech)}
\email{maazouz@caltech.edu}

\author[Terrence George]{Terrence {George}} 

\address{Terrence {George} (TIFR-CAM)}
\email{terrence@tifrbng.res.in}

\date{\today}

\begin{document}

    \begin{abstract}
  We solve the inverse problem for Ising models on reduced planar graphs in a disk, i.e., recovering the edge coupling constants from the boundary spin correlations. A recursive solution to this problem was provided by Galashin--Pylyavskyy. Our solution is non-recursive and is based on an Ising analog of the chamber ansatz which asserts that the inverse map should factor through variables on the graph that transform under the Ising Y-$\Delta$ move according to the discrete CKP equation. 
    \end{abstract}

    \maketitle


\section{Introduction}

An \emph{Ising model} in a disk $\disk$ consists of a planar graph $G=(V,E,F)$ embedded in $\disk$ with $n$ boundary vertices labeled $b_1^\partial,\dots,b_n^\partial$ in clockwise cyclic order together with a function $J:E \rightarrow \R_{>0}$ called the \emph{coupling constant}. A \emph{spin configuration} is a function 
\(
\sigma : V \rightarrow \{ -1,1\}
\)
assigning to each vertex $v \in V$ a spin $\sigma_v$. The probability of a spin configuration is defined as
\[
\mathbf{P}(\sigma):= \frac{1}{Z} \prod_{e = uv \in E} e^{J_e \sigma_u \sigma_v},
 \quad
\text{where}
\quad
Z := \sum_{\sigma: V \rightarrow \{-1,1\} } \prod_{e = uv \in E} e^{J_e \sigma_u \sigma_v} 
\] is the \emph{partition function}. We denote by $[n]$ the finite set $[n]:=\{1, \dots, n\}$ and by $\binom{[n]}{k}$ the set of all subsets of $[n]$ of size $k$. Given $i,j \in [n]$, we define the \emph{boundary correlation}  
\[
\langle \sigma_i \sigma_j \rangle : = \sum_{\sigma : V \rightarrow \{-1,1\}} \mathbf{P}(\sigma) \sigma_{b_i^\partial} \sigma_{b_j^\partial}.
\]
These correlations are organized into an $n \times n$ symmetric real matrix with $1$s on the diagonal
\[
M(G,J):=(\langle \sigma_i \sigma_j \rangle)_{ i,j \in [n]},
\]
called the \emph{boundary correlation matrix}. We write
\[
\operatorname{Corr}_G:
\mathcal I_G^{>0}
\rightarrow
\Mat_n^{\mathrm{sym}}(\R,1),
\qquad
J\mapsto M(G,J),
\]
where
\[
\mathcal I_G^{>0}
:= \Big\{ J:E(G)\to\Rpos \Big\}
\]
is the {space of Ising models on the graph $G$} and \(\Mat_n^{\mathrm{sym}}(\R,1)\) denotes the space of real symmetric
\(n\times n\) matrices with \(1\)s on the diagonal. The
\emph{inverse problem for the Ising model} asks to recover the coupling constant \(J\) from the boundary correlation matrix \(M(G,J)\).
 
In order for this inverse problem to have a unique solution, one must impose a reducedness assumption on \(G\). This is analogous to the situation for electrical networks. Lam~\cite{Lam} showed that reduced electrical networks in a disk, modulo certain local transformations of the graph called Y-\(\Delta\) moves, are classified by matchings of \([2n]\). In the Ising setting, the same combinatorial object appears: to a
reduced graph \(G\) one associates a matching \(
\pi
\) of \([2n]\). Throughout the paper, \(G\) will be assumed reduced, and all constructions will be compatible with Y-\(\Delta\) moves.

Galashin--Pylyavskyy~\cite{GalashinPylyavskyy} gave a recursive solution to
the Ising inverse problem using operations called adjoining a boundary edge
and adjoining a boundary spike. These operations are the Ising analogs of
operations for electrical networks introduced by
Curtis--Ingerman--Morrow~\cite{CIM98}. The purpose of this paper is to give a direct and
non-recursive solution to the Ising inverse problem using ideas that originate in the chamber ansatz of Berenstein--Fomin--Zelevinsky~\cite{BFZ}, later developed for the positive Grassmannian by Marsh--Scott~\cite{MarshScott}, extended to arbitrary positroid cells by Muller--Speyer~\cite{MullerSpeyer}, and applied to
electrical networks in the big cell by George~\cite{GeorgeElectrical}. In this
paper, we develop an Ising analog of this chamber-ansatz framework. 

\bigskip

We define two auxiliary functions $s, c : E(G) \rightarrow (0,1)$ as follows: 
\[
s_e := \operatorname{sech}(2J_e),\qquad c_e:= \operatorname{tanh}(2J_e). 
\]
Rather than solving directly for the coupling constant \(J\), we solve for the functions \(s\) and \(c\) from which we can recover $J$ as $\frac12\operatorname{arctanh}(c)$. The chamber ansatz for the Ising model asks that these functions factor through a collection of positive variables attached to the vertices and faces of \(G\). More precisely, a \emph{\(C\)-network} on \(G\) is a function
\[
C:V(G) \sqcup F(G) \rightarrow \R_{>0}
\]
defined modulo rescaling by $\Rpos$. Let \[ \mathcal C_G^{>0} \cong \Rpos^{|V(G)| + |F(G)|-1} \]
be the space of $C$-networks with underlying graph $G$. In terms of the $C$-network, the functions $s$ and $c$ are given by
\begin{equation}\label{eq:intro_s_and_c}
s_e := \sqrt{\frac{C_fC_g}{C_uC_v+C_fC_g}}, \qquad      c_e := \sqrt{\frac{C_uC_v}{C_uC_v+C_fC_g}},  
\end{equation}
where $e=uv$ and $f,g$ are the two faces adjacent to $e$. 
In other words,~\eqref{eq:intro_s_and_c} defines a map 
\[
q_{G}:\mathcal C_G^{>0} \rightarrow \mathcal I_G^{>0}.
\]
There is a positive torus
\begin{equation}\label{pos:torus}
T_\pi^{>0}
\subset
\Rpos^{2n},
\end{equation}
depending only on the matching \(\pi\), that acts on
\( C_G^{>0} \). The map \(q_G\) is invariant under this action, and therefore descends to a map
\[
q_G:\mathcal C_G^{>0}/T_\pi^{>0}\longrightarrow \mathcal I_G^{>0}.
\]
In this framework, solving the inverse problem reduces to constructing the dotted map making the following diagram commute:
\begin{equation}\label{eq:dotted}
\begin{tikzcd}
\mathcal C_{G}^{>0}/T_\pi^{>0}
  \arrow[rr, "q_G"]  
& &
\mathcal I_{G}^{>0} 
\\
&\{\substack{\mathrm{Boundary~correlation} \\ \mathrm{matrices}}\}  \subset \Mat_n^{\mathrm{sym}}(\R,1) \arrow[ur, "\operatorname{Corr}_{G}^{-1}"'] \arrow[ul, dotted ,"?"]  &
\end{tikzcd}
\end{equation}

The terminology $C$-network is motivated by the transformation rule for the variables $C$
under the Y-$\Delta$ move. This transformation is the discrete CKP equation
in the form introduced by Kashaev in his study of the Ising Y-$\Delta$ move~\cite{Kashaev}; see also Schief~\cite{Schief}. It is Cayley's $2\times 2\times 2$
hyperdeterminant~\cite{GKZ} and appears in the work of 
Holtz--Sturmfels~\cite{HoltzSturmfels} on the principal minor
assignment problem for symmetric matrices. Related incarnations of the same
equation appear in the work of Kenyon--Pemantle
\cite{KenyonPemantle1,KenyonPemantle2}, Leaf~\cite{Leaf},
Melotti~\cite{Melotti}, and Arthamonov--Harnad--Hurtubise~\cite{AHH},
where it is connected to statistical mechanics, cluster algebras, and
Grassmannians. It also appears in the work of Doliwa on discrete integrable systems~\cite{Doliwa} and in discrete differential geometry in the work of Bobenko--Schief~\cite{BS2,BobenkoSchief}. The chamber ansatz formula~\eqref{eq:intro_s_and_c} appears in Melotti~\cite[Remark~3]{Melotti} and, up to a sign, in the formula (10) for the partial correlations in Sturmfels--Tsukerman--Williams~\cite{STW}.

We will construct the dotted map in~\eqref{eq:dotted} via a modification of the twist map of Muller--Speyer~\cite{MullerSpeyer} for positroid cells. We first recall how boundary correlation matrices are related to the totally nonnegative Grassmannian. The \emph{Grassmannian} $\Gr_{n,2n}$ is the space of $n$-dimensional subspaces of $\R^{2n}$. Such a subspace $V\subset \R^{2n}$ can be represented as the rowspan of a full-rank $n\times 2n$ matrix. Its maximal minors
\[
\Delta_I(V),
\qquad
I\in \binom{[2n]}{n},
\]
are homogeneous coordinates on $\Gr_{n,2n}$, called \emph{Pl\"ucker coordinates}. The \emph{totally nonnegative Grassmannian} is the subset
\[
\Gr_{n,2n}^{\geq 0}
:=
\left\{
V\in \Gr_{n,2n}:
\Delta_I(V)\geq 0
\text{ for all } I\in \binom{[2n]}{n}
\right\}.
\]
Postnikov~\cite{Postnikov2006} showed that $\Gr_{n,2n}^{\geq 0}$ decomposes into \emph{positroid cells}
\[
\Gr_{n,2n}^{\geq 0}
=
\bigsqcup_{\mathscr M}
\Pi_{\mathscr M}^{>0},
\quad
\text{where}
\quad
\Pi_{\mathscr M}^{>0}
:=
\left\{
V\in \Gr_{n,2n}^{\geq 0}:
\Delta_I(V)>0
\text{ if and only if }
I\in \mathscr M
\right\}.
\]
The indexing sets $\mathscr M\subset \binom{[2n]}{n}$ that arise in this way are called \emph{positroids}. Equivalently, positroid cells are indexed by (decorated) permutations of $[2n]$; we denote the positroid corresponding to a permutation $\pi$ by $\mathscr{M}_\pi$.

\medskip

Galashin--Pylyavskyy~\cite[Section 2.3]{GalashinPylyavskyy} define a map
\[
\phi_n:
\Mat_n^{\mathrm{sym}}(\R,1)
\rightarrow
\Gr_{n,2n},
\]
called the \emph{doubling map}, whose image lies in the orthogonal Grassmannian
\[
\OGr_{n,2n}
:=
\left\{
V\in \Gr_{n,2n}:
\Delta_I(V)=\Delta_{[2n]\setminus I}(V)
\text{ for all } I\in \binom{[2n]}{n}
\right\}.
\]
Moreover, if $G$ is reduced and $\pi$ is the associated matching of $[2n]$, then $\pi$ determines a positroid $\mathscr M_{\pi}$ (viewing the matching $\pi$ as a permutation of $[2n]$), and 
\[
\phi_n(M(G,J))
\in
\OGr_{n,2n}
\cap
\Pi_{\mathscr M_{\pi}}^{>0}.
\]

Next, we relate $C$-networks to the totally nonnegative Grassmannian. $C$-networks on graphs $G$ with the maximal matching (i.e., $i \mapsto i+n ~(\text{mod}~2n))$ were first defined by Kenyon--Pemantle~\cite{KenyonPemantle2}, who used them to parameterize the space of positive-definite symmetric matrices. Here, the $C$-variables are principal and almost-principal minors of the matrix. Equivalently, this space can be identified with the positive part ${\rm LG}^{> 0}_{n,2n}$ of a Lagrangian Grassmannian $\text{LG}_{n,2n} \subset \Gr_{n,2n}$ with respect to the symplectic form 
\[
\omega (x,y) := \sum_{i=1}^{n} (-1)^{i-1} (x_{i}y_{i+n} - x_{i+n}y_{i}), \quad \text{for } x,y \in \R^{2n}.
\]
This semialgebraic set appears in work of~Shevchenko~\cite{Shevchenko}, and differs from the totally positive Lagrangian Grassmannian (in Lusztig’s sense~\cite{Lusztig}) studied by Karpman~\cite{Karpman} and the totally positive Lagrangian Grassmannian arising in the theory of electrical networks in work of Lam~\cite{Lam}, Chepuri--George--Speyer~\cite{CGS} and Bychkov--Gorbounov--Kazakov--Talalaev~\cite{BGKT}, which use other symplectic forms. Under this identification with ${\rm LG}^{>0}_{n,2n}$, the $C$-variables, i.e., the principal and almost-principal minors, are Pl\"ucker coordinates of the point in $\text{LG}^{>0}_{n,2n}$ and this construction can be viewed as a specialization of a construction of Scott~\cite{scott} for the totally nonnegative Grassmannian. Generalizing this, we define subsets 
\[
\Lambda_{\pi}^{>0} \subset \Pi_{\mathscr M_{\pi}}^{>0},
\] 
which we call \emph{locally Lagrangian positroid cells}, together with maps
\[
\bm H_G:
\Lambda_\pi^{>0}
\rightarrow
\mathcal C_G^{>0},
\]
where each $C$-variable is given by a Pl\"ucker coordinate of the point in $\Lambda_{\pi}^{>0}$. 
We show that the conditions defining $\Lambda_{\pi}^{>0}$ inside $\Pi_{\mathscr M_{\pi}}^{>0}$ may be viewed as local Lagrangian conditions, thereby justifying the terminology. We then prove that $\bm H_G$ is a bijection and that these bijections are compatible
with Y-$\Delta$ moves.

\begin{remark}
    A different collection of subsets of the Grassmannian generalizing the big cell of the Lagrangian Grassmannian appears in the study of Gaussoids by Boege--D'Al\`i--Kahle--Sturmfels \cite{gaussoids}.
\end{remark}

The final ingredient is a modification of the twist map. For an
arbitrary positroid \(\mathscr M\), Muller--Speyer~\cite{MullerSpeyer} define
right and left twist maps
\[
\rt,\lt:
\Pi_{\mathscr M}^{>0}
\rightarrow
\Pi_{\mathscr M}^{>0},
\]
which are inverse to each other and which relate the \(A\)- and \(X\)-cluster
coordinates of the positroid cell. In the Ising setting, the twist maps must be modified by the natural action of $\Rpos^{2n}$ rescaling the columns of a matrix representative to be compatible with $q_G$; a similar modification was required for electrical networks in George~\cite{GeorgeElectrical}. 

The positive torus~\eqref{pos:torus} acts on $\Lambda_{\pi}^{>0}$, and the map $\bm H_G$ is equivariant and therefore descends to a map
\[
\bm H_G:
\Lambda_{\pi}^{>0}/T_{\pi}^{>0}
\rightarrow
\mathcal C_G^{>0}/T_{\pi}^{>0}.
\]
The \emph{Ising twist and inverse twist maps} 
\[
\hat{\tau}_{\mathscr M_\pi}:
\Lambda_{\pi}^{>0}/T_{\pi}^{>0}
\to
\OGr_{n,2n}
\cap
\Pi_{\mathscr M_{\pi}}^{>0}, \qquad
\hat{\tau}_{\mathscr M_{\pi}}^{-1}:
\OGr_{n,2n}
\cap
\Pi_{\mathscr M_{\pi}}^{>0}
\to
\Lambda_{\pi}^{>0}/T_{\pi}^{>0}.
\]
defined after passing to the quotient \(\Lambda_\pi^{>0}/T_\pi^{>0}\). We can now state our main theorem.

\begin{theorem}\label{thm:intro_main_thm_inverse}
    The Ising twist map $\hat \tau_{{\mathscr{M}}_{\pi}}:\Lambda_{{\pi}}^{>0}/T_{\pi}^{>0} \rightarrow \OGr^{\ge 0}_{n,2n} \cap \Pi_{\mathscr{M}_{\pi}}^{>0} $ is a bijection whose inverse is $\hat \tau^{-1}_{\mathscr{M}_{\pi}}$. They fit into the following commutative diagram in which every map is a bijection:
\[
\begin{tikzcd}[row sep = large]
\mathcal C_{G}^{>0}/T_{\pi}^{>0} 
  \arrow[r, "q_G"] 
& 
\mathcal I_{G}^{>0} 
  \arrow[d, "\operatorname{Corr}_{G}"] 
\\
& \{\substack{\mathrm{Boundary~correlation} \\ \mathrm{matrices}}\} \subset \Mat_n^{\mathrm{sym}}(\R,1)   \arrow[d, "\phi_n"] 
\\
\Lambda_{{\pi}}^{>0}/T_{\pi}^{>0}
  \arrow[uu, "\bm H_{G}"]   
  \arrow[r,
    bend right=20,
    swap,
    "{\hat \tau_{{\mathscr{M}}_{\pi}}}"
  ] 
& 
\OGr^{\ge 0}_{n,2n} \cap \Pi_{\mathscr{M}_{\pi}}^{>0}
  \arrow[l,
    bend right=20,
    swap,
    "\hat \tau^{-1}_{\mathscr{M}_{\pi}}"
  ]
\end{tikzcd}
\]
\end{theorem}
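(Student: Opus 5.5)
The plan is to prove the theorem by closing the diagram, using three facts established earlier in the paper as black boxes. First, $\bm H_G$ is a $T_\pi^{>0}$-equivariant bijection compatible with Y-$\Delta$ moves. Second, $q_G$ is $T_\pi^{>0}$-invariant. Third, $\phi_n\circ\operatorname{Corr}_G$ takes values in $\OGr^{\ge 0}_{n,2n}\cap\Pi^{>0}_{\mathscr M_\pi}$. By Galashin--Pylyavskyy, $\operatorname{Corr}_G$ is injective for reduced $G$, and its image is exactly the set of matrices that $\phi_n$ sends onto $\OGr^{\ge0}_{n,2n}\cap\Pi^{>0}_{\mathscr M_\pi}$. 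Since $\phi_n$ is injective, the composite $\Phi:=\phi_n\circ\operatorname{Corr}_G$ is therefore a bijection $\mathcal I_G^{>0}\to\OGr^{\ge0}_{n,2n}\cap\Pi^{>0}_{\mathscr M_\pi}$. The core of the argument is then the single identity
\[
\hat\tau_{\mathscr M_\pi}=\phi_n\circ\operatorname{Corr}_G\circ q_G\circ\bm H_G \quad\text{on } \Lambda_\pi^{>0}/T_\pi^{>0},
\]
together with the fact that $q_G$ is a bijection on the quotient.

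\textbf{Step 1: $q_G$ is a bijection.} I will show directly that $q_G:\mathcal C_G^{>0}/T_\pi^{>0}\to\mathcal I_G^{>0}$ is bijective. Equivalently, since $c_e^2/s_e^2=C_uC_v/(C_fC_g)$, it suffices to show that the map $C\mapsto (C_uC_v/C_fC_g)_e$ is a bijection modulo the torus. Take logarithms. This turns the map into a linear map $\R^{V\sqcup F}/\R\to\R^{E}$ whose kernel should be exactly $\log T_\pi^{>0}$; the kernel directions correspond to the strands of the medial graph, giving $2n$ directions minus relations. Surjectivity then reduces to the dimension count $|V|+|F|-1-\dim T_\pi=|E|$. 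This count holds for reduced graphs by Euler's formula together with the strand description of $T_\pi$. An alternative route is to induct via Y-$\Delta$ moves and adjoining edges or spikes.

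\textbf{Step 2: the key identity.} It is enough to check the identity on a single reduced graph $G$ for each $\pi$. All the maps involved are compatible with Y-$\Delta$ moves: $\bm H_G$ by the earlier result, $q_G$ because the discrete CKP equation pushes forward to the Ising Y-$\Delta$ rule, and $\operatorname{Corr}_G$ because it is invariant. I would choose $G$ to be built by the Galashin--Pylyavskyy recursion, namely adjoining boundary edges and spikes, and prove the identity by induction on the number of edges. Adjoining a boundary edge or spike acts on $\phi_n(M)$ by an explicit elementary matrix, essentially a rotation $x_i(t)$ in columns $i,i+1$, with $t$ depending on $c_e,s_e$. On the other side, I would track how the twist $\hat\tau$ (the Muller--Speyer right twist followed by the column rescaling) and the Plücker coordinates $C=\bm H_G(V)$ change. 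The Muller--Speyer formula expresses the twisted Plücker coordinates as face-variable ratios via matchings or flows. The column rescaling should make the new edge's $c_e,s_e$ appear exactly as in \eqref{eq:intro_s_and_c}. This local computation is the main obstacle. It requires showing that the rescaled twist of a point in $\Lambda_\pi^{>0}$ is isotropic, i.e., lies in $\OGr$, and that the elementary factor it produces matches $c_e,s_e$. I would first check it for the maximal-matching case, using Kenyon--Pemantle's principal-minor parametrization, and then localize.

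\textbf{Step 3: conclusion.} The identity from Step 2 exhibits $\hat\tau_{\mathscr M_\pi}$ as a composite of bijections, so it is itself a bijection onto $\OGr^{\ge0}_{n,2n}\cap\Pi^{>0}_{\mathscr M_\pi}$, and the diagram commutes. It remains to identify the inverse with $\hat\tau^{-1}_{\mathscr M_\pi}$. Muller--Speyer give $\lt\circ\rt=\mathrm{id}$ on $\Pi^{>0}_{\mathscr M}$, and the torus rescalings intertwine with the twists. Hence $\hat\tau^{-1}\circ\hat\tau=\mathrm{id}$ on the quotient. Because $\hat\tau$ is a bijection, this one-sided identity already forces $\hat\tau^{-1}$ to be its two-sided inverse.
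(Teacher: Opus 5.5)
\textbf{The gap is Step~2.} It carries the whole content of the theorem, and you leave it as an unexecuted plan; you call it ``the main obstacle'' yourself. Nothing in the plan explains how the Muller--Speyer twist behaves when a boundary edge or spike is adjoined. The twist $\rt$ is a global construction determined by the whole Grassmann necklace. Adjoining an edge changes $\pi$, the necklace, the face labels and the cell $\Lambda_\pi^{>0}$. So there is no evident relation between $\hat\tau_{\mathscr M_{\pi'}}(V')$ and $\hat\tau_{\mathscr M_\pi}(V)$ multiplied by an elementary factor $x_i(t)$. ``Check the maximal matching via Kenyon--Pemantle and localize'' does not supply such a relation either. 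As written, the identity $\hat\tau_{\mathscr M_\pi}=\phi_n\circ\operatorname{Corr}_G\circ q_G\circ\bm H_G$ is asserted, not proved.

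\textbf{The missing idea.} No induction is needed: the identity is a diagram chase through the dimer model on $G^\square$.
\begin{itemize}
\item By \cref{thm:Muller_Speyer}, $\rt=\operatorname{Meas}_{G^\square}\circ p_{G^\square}\circ\bm F_{G^\square}$.
\item $\operatorname{Meas}$ is $\Rpos^{2n}$-equivariant, and the torus only rescales boundary edge weights. So the factor $\sqrt{\Delta_{I_i}/\Delta_{I_{i+1}}}$ in $\hat\tau$ turns the boundary weights $1/A_{f^\partial_{i-1,i}}$ and $1/A_{f^\partial_{i,i+1}}$ into $1/\sqrt{A_{f^\partial_{i-1,i}}A_{f^\partial_{i,i+1}}}$. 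That is, $\hat\tau_{\mathscr M}=\operatorname{Meas}\circ\hat p\circ\bm F$ (\cref{prop:modified_twist_diagram}).
\item On $\Lambda_\pi^{>0}$, the locally Lagrangian condition together with the three-term Pl\"ucker relation gives $A_e=\sqrt{C_uC_v+C_fC_g}$ on square faces. Hence $\bm F_{G^\square}$ factors through $\bm H_G$.
\item A gauge transformation on each square turns $\hat p_{G^\square}(A)$ into the Ising weights $s_e=\sqrt{A_fA_g}/A_e$ and $c_e=\sqrt{A_uA_v}/A_e$, which is exactly $q_G(C)$ (\cref{lem:modified_twist_equals_qG}).
\item Galashin--Pylyavskyy (\cref{thm:gal_pyl_main}) identify $\operatorname{Meas}$ of that dimer model with $\phi_n\circ\operatorname{Corr}_G$. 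This also shows for free that $\hat\tau_{\mathscr M_\pi}(V)$ is isotropic, which you listed as part of the obstacle.
\end{itemize}

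\textbf{The rest of your argument.} With Step~2 done this way, your architecture is valid and differs usefully from the paper.
\begin{itemize}
\item Step~1 is correct once made precise. In the quad graph on $V(G)\sqcup F(G)$, the kernel of $L$ modulo constants has one parameter per strand. Its dimension is therefore $n$, not ``$2n$ minus relations''. Moreover, $\log T_\pi^{>0}$ maps isomorphically onto it, because $\bm T(v)$ and $\bm T(f)$ differ by swapping $i\leftrightarrow\bar i$ for the strand separating them.
\item Your one-sided-inverse argument in Step~3 legitimately replaces \cref{lem:hat_tau_inverse_well_defined}.
\item In Step~3 you do need \cref{lem:MS}(2) and $I_{i+1}=I_i\setminus\{i\}\cup\{\bar i\}$, not equivariance alone, to see that the two torus factors cancel modulo $T_\pi^{>0}$.
\end{itemize}
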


In particular, the dotted map in~\eqref{eq:dotted} which solves the inverse problem is given by the composition
\[
 \bm H_G \circ \hat \tau^{-1}_{\mathscr{M}_{\pi}} \circ \phi_n.
\]

\subsection*{Open problems}

We end the introduction with some open problems that we believe are interesting for further study.

\begin{problem}
It is natural to study the algebro-geometric counterparts of locally Lagrangian positroid cells, i.e., the Zariski closures of the semialgebraic sets $\Lambda_\pi^{>0}$ inside the complex Lagrangian Grassmannian, similarly to the positroid varieties of Knutson--Lam--Speyer~\cite{KLS}.
\end{problem}

\begin{problem}
The chamber ansatz formula~\eqref{eq:intro_s_and_c} also appears in
formula~(10) for the partial correlations in Sturmfels--Tsukerman--Williams~\cite{STW}. This suggests that the two settings
may be related and it would be very interesting to understand the precise nature of this relation.
\end{problem}

\begin{problem}
It would be interesting to find the electrical-network analogs of the locally Lagrangian positroid cells. For the maximal matching, this is a totally positive orthogonal Grassmannian; see Henriques--Speyer~\cite{HS} and George~\cite{GeorgeElectrical}. 
\end{problem}

\begin{problem}
    The target space of the inverse map $\hat{\tau}^{-1}_{\mathscr{M}_{\pi}}$ is the quotient $\Lambda_{\pi}^{> 0}$ of the locally Lagrangian positroid cell modulo the action of $T_{\pi}^{> 0}$. When $\pi$ is the matching $\pi_{0} \colon i \mapsto i + n~({\rm mod}~2n)$, the cell $\Lambda_{\pi_0}^{> 0}$ is the big cell ${\rm LG}^{> 0}_{n,2n}$ of the Lagrangian Grassmannian in \cite{Shevchenko}. It would be interesting to study the quotient $ \Lambda_{\pi_0}^{> 0} / T^{ > 0}_{\pi_0} = {\rm LG}^{> 0}_{n,2n} / T^{> 0}_{\pi_{0}}$ in light of \cite{PositiveConfSpace}.
\end{problem}

\subsection*{Organization of the paper}
In~\cref{sec:2}, we review the necessary background on the positive Grassmannian and its connections to the dimer model and $A$-networks. In~\cref{sec:3}, we recall the corresponding background on the positive orthogonal Grassmannian and its relation to the Ising model. In~\cref{sec:4}, we introduce $C$-networks and locally Lagrangian positroid cells. Finally, in~\cref{sec:ising_cluster_ensemble_mod_twist}, we define the maps appearing in the diagram of the main result, \cref{thm:intro_main_thm_inverse}, and prove the theorem.


   \section{Background on the positive Grassmannian, the dimer model and \texorpdfstring{$A$}{A}-networks}\label{sec:2}

    In this section we collect some background on the totally nonnegative Grassmannian which we shall need throughout this article.
    
   \subsection{The Grassmannian and positivity}
   Let $1 \leq k \leq n$ be positive integers. The real \emph{Grassmannian} $\Gr_{k,n} := \Gr_{k,n}(\R) $ is the moduli space of $k$-dimensional subspaces of the vector space $\R^n$.   A $k$-dimensional subspace $V \subset \R^{n}$ can be represented as the rowspan of a full-rank $k \times n$ matrix $M$, and the map
   \[
        \Gr_{k,n} \rightarrow \PP^{\binom{n}{k}-1} , \quad  V  = {\rm rowspan}(M) \mapsto \big(\Delta_I(V) = \det(M_I)\big)_{I \in \binom{[n]}{k}},
   \]
   is the \emph{Pl\"ucker embedding} of the Grassmannian $\Gr_{k,n}$ into the projective space $\PP^{\binom{n}{k}-1}$. Here $M_I$ denotes the $k \times k$-submatrix of $M$ with all rows but columns only from the index set $I$. The Grassmannian $\Gr_{k,n}$ is a projective variety and the $\Delta_I(V)$ are called \emph{Pl\"ucker coordinates}. We will often omit the vector space in the notation and simply write $\Delta_I$. The \emph{totally positive Grassmannian} $\Gr_{k,n}^{> 0}$ is the semialgebraic subset of points in $\Gr_{k,n}$ such that $\Delta_{I} > 0$ for all $I \in \binom{[n]}{k}$. Its topological closure in $\PP^{\binom{n}{k}-1}$ is the \emph{totally nonnegative Grassmannian} consisting of points in $\Gr_{k,n}^{\geq 0}$ such that $\Delta_{I} \geq 0$ for any $I \in \binom{[n]}{k}$. Given a point $V \in \Gr_{k,n}^{\geq 0}$, its \emph{matroid} $\mathscr{M}_V$ is defined by
   \[
   \mathscr{M}_V := \left\{I \in \binom{[n]}{k}: \Delta_I(V) \neq 0\right\}.
   \]
    In this article, we shall identify any rank $k$ matroid $\mathscr{M}$ on the ground set $[n]$ with its set of bases and write $\mathscr{M} \subset \binom{[n]}{k}$. Any matroid $\mathscr{M} \subset \binom{[n]}{k}$ such that there exists a $V \in \Gr_{k,n}^{\geq 0}$ with $\mathscr{M} = \mathscr{M}_V$ is called a \emph{positroid}. The corresponding \emph{positroid cell} is 
    \[
    \Pi_{\mathscr{M}}^{>0}:= \left\{V \in \Gr_{k,n}^{\geq 0}: \mathscr{M}_V = \mathscr{M} \right\}.
    \]
By definition of positroid cells, we have
\begin{equation}
    \Gr_{k,n}^{\geq 0} = \bigsqcup_{\text{positroids}~\mathscr{M}} \Pi_{ \mathscr{M}}^{>0}, \label{eq:positroidstratification}
\end{equation}
where the union is taken over all positroids of rank $k$ on the ground set $[n]$.

Torus actions will play an important role in this paper and we will systematically keep track of actions on the various objects we define. In particular, we note that $\Rpos^n$ acts on $\Gr_{k,n}^{\geq 0}$ by rescaling the basis vectors of $\R^n$, i.e., by rescaling the columns of any matrix representative. More explicitly, we have
\[
    (t \cdot \Delta)_I = \left(\prod_{i \in I} t_i\right) \Delta_I, \quad \text{for } \Delta \in \Gr_{k,n}^{\geq 0} \text{ and } t = (t_1, \dots, t_n) \in \Rpos^n,
\]
and this action preserves each positroid cell $\Pi^{>0}_{\mathscr{M}}$.

   \subsection{Plabic graphs in the disk}

          A \emph{plabic graph} $\Gamma$ is a bipartite graph embedded in the disk $\disk$ such that:
          \begin{enumerate}
              \item $\Gamma$ has exactly $n$ degree-$1$ boundary vertices, labeled \( u_1^\partial, \dots, u_{n}^\partial \) in clockwise cyclic order, each of which is adjacent to exactly one internal vertex.
              \item The internal vertices are colored black or white and adjacent vertices have opposite colors.
          \end{enumerate}
        Although there is a unique choice of coloring of boundary vertices that makes $\Gamma$ bipartite, it will be more convenient to treat the boundary vertices as uncolored. We denote the vertices, edges and faces of $\Gamma$ by $V(\Gamma), E(\Gamma)$ and $F(\Gamma)$ respectively. 
        We write $B(\Gamma), W(\Gamma)$ and $\partial V(\Gamma)$ for the black, white and boundary vertices of $\Gamma$ respectively, so 
        \(
            V(\Gamma) = B(\Gamma) \sqcup W(\Gamma) \sqcup \partial V(\Gamma).
        \)
        
         We denote by $f^\partial_{i,i+1}$ the boundary face between $u_{i}^\partial$ and $u_{i+1}^\partial$. We denote by 
          \[
          k:= |W(\Gamma)|-|B(\Gamma)|+ \Big|\{b \in B(\Gamma): b~\text{is adjacent to the boundary}\}\Big|
          \]
          the \emph{helicity} of $\Gamma$ and say that $\Gamma$ is of \emph{type} $(k,n)$.

\begin{figure}[ht]
    \centering
    \scalebox{0.9}{
    \begin{tikzpicture}[scale=0.55]
        \begin{scope}[shift={(-3,0)}]
            \def\r{2};
            \fill[black!5] (0,0) circle (1.05*\r cm);
            \coordinate[] (n1) at (2,0);
           \coordinate[ label = left: $u_i^\partial$] (n2) at (-2,0);
            \draw[] (n1) -- (n2);
        \end{scope}
        
        \node at (0.5,0) {$\rightarrow$};	
        
        \begin{scope}[shift={(4,0)}]
            \def\r{2};
            \fill[black!5] (0,0) circle (1.05*\r cm);
            \draw[blue,line width=\lw] (-2,0) -- (45:2) 
                                       (-2,0) -- (-45:2);
             \coordinate[nvert,label = left: $u_i^\partial$] (n3) at (-2,0);
        \end{scope}

        \begin{scope}[shift={(10,0)}]
            \def\r{2};
            \fill[black!5] (0,0) circle (1.05*\r cm);
            \coordinate[bvert] (n1) at (2,0);
            \coordinate[wvert] (n2) at (-2,0);
            \draw[] (n1) -- (n2);
        \end{scope}
        
        \node at (13.5,0) {$\rightarrow$};	
        
        \begin{scope}[shift={(17,0)}]
          \def\r{2};
            \fill[black!5] (0,0) circle (1.05*\r cm);
            \coordinate[] (n1) at (135:\r);
            \coordinate[] (n2) at (225:\r);
            \draw[blue,line width=\lw] (n1) -- (-45:\r)
                                       (n2) -- (45:\r);
            \node[nvert] at (0,0) {};
        \end{scope}

        \draw[] (7,-2) -- (7,2);
        \node at (0.5,-3) {(a)};	
        \node at (13.5,-3) {(b)};	

        \begin{scope}[shift={(-3,-6)}]
            \def\r{2};
            \fill[black!5] (0,0) circle (1.05*\r cm);
            \coordinate[bvert] (n1) at (0,0);
            \draw[] (0,0) edge (2,0) edge (120:2) edge (240:2);
        \end{scope}
        \node at (0.5,-6) {$\rightarrow$};	
         \begin{scope}[shift={(4,-6)}]
          \def\r{2};
            \fill[black!5] (0,0) circle (1.05*\r cm);
            \coordinate[] (n1) at (135:\r);
            \coordinate[] (n2) at (225:\r);
      
            \draw[blue,->,line width=\lw,rounded corners=2*\rc] (20:2) --(60:1)-- (100:2);
            \draw[blue,->,line width=\lw,rounded corners=2*\rc] (120+20:2) --(120+60:1)-- (120+100:2);
            \draw[blue,->,line width=\lw,rounded corners=2*\rc] (240+20:2) --(240+60:1)-- (240+100:2);
        \end{scope}

        \begin{scope}[shift={(10,-6)}]
            \def\r{2};
            \fill[black!5] (0,0) circle (1.05*\r cm);
            \draw[] (0,0) edge (2,0) edge (120:2) edge (240:2);
            \coordinate[wvert] (n1) at (0,0);
        \end{scope}
            \node at (13.5,-6) {$\rightarrow$};	
         \begin{scope}[shift={(17,-6)}]
            \def\r{2};
            \fill[black!5] (0,0) circle (1.05*\r cm);
            \coordinate[] (n1) at (135:\r);
            \coordinate[] (n2) at (225:\r);
      
            \draw[blue,<-,line width=\lw,rounded corners=2*\rc] (20:2) --(60:1)-- (100:2);
            \draw[blue,<-,line width=\lw,rounded corners=2*\rc] (120+20:2) --(120+60:1)-- (120+100:2);
            \draw[blue,<-,line width=\lw,rounded corners=2*\rc] (240+20:2) --(240+60:1)-- (240+100:2);
        \end{scope}
        
        \draw[] (7,-8) -- (7,-4);
        \node at (0.5,-9) {(c)};	
        \node at (13.5,-9) {(d)};	
    \end{tikzpicture}}
    \caption{The construction of the oriented medial graph \( \Gamma^\times \) from \( \Gamma \) at (a) boundary edges, (b) internal edges, (c) black vertices and (d) white vertices. The vertices may be of any degree.}
    \label{fig:medial_graph_plabic}
\end{figure}

    \begin{figure}[ht]
        \centering
        \scalebox{0.95}{
        \begin{tikzpicture}
        \begin{scope}[shift={(0,0)}]
            \def\r{2};
            \fill[black!5] (0,0) circle (1*\r cm);
            \draw[dashed, gray] (0,0) circle (1*\r cm);
            \coordinate[label=below:  $u_1^\partial$] (u1) at (0,-2);
            \coordinate[label=left:  $u_2^\partial$] (u2) at (-1.79,-0.9);
            \coordinate[label=left:  $u_3^\partial$] (u3) at (-1.79, 0.9);
            \coordinate[label=above:  $u_4^\partial$] (u4) at (0,2);
            \coordinate[label=right:  $u_5^\partial$] (u5) at (1.79, 0.9);
            \coordinate[label=right:  $u_6^\partial$] (u6) at (1.79, -0.9);

            \coordinate[bvert,] (v1) at (0,-1.4);
            \coordinate[bvert,] (v2) at (0,-0.4);
            \coordinate[wvert] (v3) at (-0.5,-0.9);
            \coordinate[wvert] (v4) at (0.5,-0.9);

            \coordinate[wvert] (v5) at (0,0.4);
            \coordinate[wvert] (v6) at (0,1.4);
            \coordinate[bvert] (v7) at (-0.5,0.9);
            \coordinate[bvert] (v8) at ( 0.5,0.9);
            
            \draw[] (u1) -- (v1);
            \draw[] (u2) -- (v3);
            \draw[] (u3) -- (v7);
            \draw[] (u4) -- (v6);
            \draw[] (u5) -- (v8);
            \draw[] (u5) -- (v8);
            \draw[] (u6) -- (v4);

            \draw[] (v1) -- (v3);
            \draw[] (v1) -- (v4);
            \draw[] (v3) -- (v2);
            \draw[] (v4) -- (v2);
            \draw[] (v5) -- (v7);
            \draw[] (v5) -- (v8);
            \draw[] (v7) -- (v6);
            \draw[] (v8) -- (v6);
            \draw[] (v2) -- (v5);

            \node[scale=1, blue] at (-1, 1.25) {$456$};
            \node[scale=1, blue] at (-1, 0) {$346$};
            \node[scale=1, blue] at (-1, -1.25) {$234$};
            \node[scale=1, blue] at (0, 0.9) {$356$};
            \node[scale=1, blue] at (0, -0.9) {$134$};
            \node[scale=1, blue] at (1, 1.25) {$156$};
            \node[scale=1, blue] at (1, 0) {$136$};
            \node[scale=1, blue] at (1, -1.25) {$123$};
            
            \node[scale=1] at (0, -3.2) {(a) $\Gamma$};        
        \end{scope}

        \begin{scope}[shift={(6,0)}]
            \def\r{2};
            \fill[black!5] (0,0) circle (1*\r cm);
            \draw[dashed, gray] (0,0) circle (1*\r cm);
            \coordinate[] (u1) at (0,-2);
            \coordinate[] (u2) at (-1.79,-0.9);
            \coordinate[] (u3) at (-1.79, 0.9);
            \coordinate[] (u4) at (0,2);
            \coordinate[] (u5) at (1.79, 0.9);
            \coordinate[] (u6) at (1.79, -0.9);

            \coordinate[bvert] (v1) at (0,-1.4);
            \coordinate[bvert] (v2) at (0,-0.4);
            \coordinate[wvert] (v3) at (-0.5,-0.9);
            \coordinate[wvert] (v4) at (0.5,-0.9);

            \coordinate[wvert] (v5) at (0,0.4);
            \coordinate[wvert] (v6) at (0,1.4);
            \coordinate[bvert] (v7) at (-0.5,0.9);
            \coordinate[bvert] (v8) at ( 0.5,0.9);

            \draw[gray,dashed] (u1) -- (v1);
            \draw[gray,dashed] (u2) -- (v3);
            \draw[gray,dashed] (u3) -- (v7);
            \draw[gray,dashed] (u4) -- (v6);
            \draw[gray,dashed] (u5) -- (v8);
            \draw[gray,dashed] (u5) -- (v8);
            \draw[gray,dashed] (u6) -- (v4);

            \draw[gray,dashed] (v1) -- (v3);
            \draw[gray,dashed] (v1) -- (v4);
            \draw[gray,dashed] (v3) -- (v2);
            \draw[gray,dashed] (v4) -- (v2);
            \draw[gray,dashed] (v5) -- (v7);
            \draw[gray,dashed] (v5) -- (v8);
            \draw[gray,dashed] (v7) -- (v6);
            \draw[gray,dashed] (v8) -- (v6);
            \draw[gray,dashed] (v2) -- (v5);

            \draw[yellow!80!red,->,line width=\lw,rounded corners=\rc]
                (u1) -- (0.3,-1.5) -- (0.3,-0.5) -- (-0.3,0.5) -- (-0.3,1.5) -- (u4);
            \draw[red,->,line width=\lw,rounded corners=\rc]
                (u4) -- (0.3,1.5) -- (0.3,0.5) -- (-0.3,-0.5) --  (-0.3,-1.5) --  (u1); 
            \draw[blue!80!red,->,line width=\lw,rounded corners=\rc]
                (u2) --  (0,-0.65) -- (u6); 
            \draw[green!80!black,->,line width=\lw,rounded corners=\rc]
                (u6) --  (0,-1.15) -- (u2); 
            \draw[cyan,->,line width=\lw,rounded corners=\rc]
                (u3) --  (0,0.65) -- (u5); 
            \draw[violet,->,line width=\lw,rounded corners=\rc]
                (u5) --  (0,1.15) -- (u3); 
            \node[scale=1] at (0, -3.2) {(b) $\Gamma^\times$};
        \end{scope}
        \end{tikzpicture}}
        \caption{Example of the medial graph $\Gamma^\times$ and target face labels of a plabic graph $\Gamma$.}
        \label{fig:medialGraphExample}
    \end{figure}

Let $\Gamma^\times$ denote the \emph{directed medial graph} of $\Gamma$ constructed as illustrated in \cref{fig:medial_graph_plabic}. The boundary vertices of $\Gamma^\times$ naturally correspond to the boundary vertices of $\Gamma$; we therefore label them by $u_1^\partial,\dots,u_n^\partial$ as well. A \emph{strand} of \( \Gamma \) is a maximal directed path in $\Gamma^\times$. Each strand is either a closed loop or a directed path connecting two boundary vertices. The \emph{decorated permutation} $\pi_\Gamma^: =(\pi_\Gamma,\text{col})$ consists of:
        \begin{enumerate}
            \item A permutation \(\pi_\Gamma : [n] \rightarrow [n] \) defined by
        \[
\pi_\Gamma(i) := j, \quad \text{if the strand starting at \( u_i^\partial \) ends at \( u_j^\partial \)}.
        \]
        \item When \( i \) is a fixed point of \( \pi_\Gamma \), a \emph{color} $\text{col}(i) \in \{ \bullet, \circ\}$. The color will not play a role for the plabic graphs that arise from Ising models that we treat in this paper; hence, we omit the details of its construction.
        \end{enumerate}

        We say that the graph $\Gamma$ is \emph{reduced} if the following hold:
        \begin{enumerate}
            \item No strand is a closed loop.
            \item No strand has a self-intersection.
            \item No two strands form a \emph{parallel bigon}, i.e., there are no two edges \( e_1, e_2 \in E(\Gamma) \) such that both strands go through \( e_1\) and \( e_2 \) with the same orientation.
        \end{enumerate}
    Hereafter, we assume that our plabic graphs are reduced. 

    \medskip
    
    Given a reduced plabic graph $\Gamma$ of type $(k,n)$, we define two maps
    \[
    \bm S, \bm T: F(\Gamma) \rightarrow \binom{[n]}{k},
    \]
    called \emph{source} and \emph{target labels} respectively, as follows. Each strand of $\Gamma$ is assigned a label $i \in [n]$, determined by its source vertex or its target vertex, depending on the chosen convention. Since $\Gamma$ is reduced, there are exactly $n$ such strands labeled by $[n]$. Each face $f$ of $\Gamma$ is labeled by the subset of strand labels such that $f$ lies on the left side of the strand. It is easy to see that each face of $\Gamma$ thus obtains a label $I \subset [n]$ of the same cardinality \( k \). 
    
    A \emph{Grassmann necklace} $\bm I = (I_1,\dots,I_n)$ is a sequence of $k$-element subsets of $[n]$ such that for each $i \in [n]$, there is a $j_i \in [n]$ such that $I_{i+1}=I_i \setminus \{i\} \cup \{j_i\}$. The \emph{Grassmann necklace} $\bm I_\Gamma = (I_1,\dots,I_n)$ of $\Gamma$ is defined by $I_{i} := \bm T(f^\partial_{i-1,i})$. Here $f^\partial_{i-1,i} \in F(\Gamma)$ denotes the boundary face of $\Gamma$ between the boundary vertices $u_{i-1}^\partial$ and $u_i^\partial$. Similarly, there is a dual object called the \emph{reverse Grassmann necklace} defined by $\bm I^{\text{rev}}_\Gamma = (I_1^\text{rev},\dots,I_n^\text{rev})$ where $I_i^{\text{rev}} := \bm S(f_{i,i+1}^\partial)$. 

   \begin{example}
   Figure~\ref{fig:medialGraphExample} shows a plabic graph $\Gamma$ with $\pi_\Gamma = 465132$ (in one-line notation) together with its medial graph $\Gamma^\times$ and target face labels, from which we get that the Grassmann necklace 
   \[
   \bm I_\Gamma = (123,234,346,456,156,136).
   \]
   \end{example}

 A \emph{dimer cover} or \emph{almost perfect matching} of $\Gamma$ is a subset of the edges that uses every internal vertex exactly once and every boundary vertex at most once. The \emph{boundary} of a dimer cover $M$ is defined as follows:
    \[
\partial M := 
    \left\{ 
        i \in [n] \colon 
            \begin{matrix}
            u_i^\partial \text{ is adjacent to a  white vertex and covered by $M$}, \\ \text{or is adjacent to a black vertex and \emph{not} covered by $M$}
        \end{matrix} 
    \right\}.
    \]
The \emph{positroid} $\mathscr{M}_\Gamma$ of $\Gamma$ is then defined as
\[
\mathscr{M}_\Gamma := \left\{ I \in \binom{[n]}{k}: \text{there is a dimer cover $M$ with $\partial M=I$ }\right\}.
\]

The local modifications of plabic graphs shown in \cref{fig:moves:A} are called \emph{moves}. Two plabic graphs $\Gamma$ and $\Gamma'$ are said to be \emph{move-equivalent} if they are related by a sequence of moves.  

\begin{theorem}[Postnikov~\cite{Postnikov2006}] \label{thm:post_bijections}
  The maps $\Gamma \mapsto \pi_{\Gamma}^{:}$, $\Gamma \mapsto \bm{I}_{\Gamma}$ and $\Gamma \mapsto \mathscr{M}_\Gamma$ that associate to a reduced plabic graph $\Gamma$ (up to move-equivalence) its decorated permutation, Grassmann necklace and its positroid, respectively, are all bijections. In particular, decorated permutations, Grassmann necklaces and positroids all classify move-equivalence classes of reduced plabic graphs.

\end{theorem}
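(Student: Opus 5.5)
Since this is Postnikov's theorem, I will only outline the structure of the standard argument; the diagram-by-diagram checks are routine. The plan is to treat the decorated permutation as the primary invariant and to derive the statements for Grassmann necklaces and positroids from it. The first step is \emph{well-definedness}: each move in \cref{fig:moves:A} (square move, contraction/expansion of same-colored vertices, removal of degree-two vertices) is a local modification that reroutes strands only inside a small disk. Inspecting the medial graph $\Gamma^\times$ before and after each move shows that every strand entering the disk at a given boundary point leaves at the same boundary point. It also shows that no closed loops, self-intersections or parallel bigons are created. Hence $\pi^:_\Gamma$ is a move invariant and reducedness is preserved, and the colors of fixed points, which come from isolated boundary components, are untouched.

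The second step relates the three invariants. The target labels $\bm T$ are determined by which strands have a given face on their left. For a boundary face $f^\partial_{i-1,i}$, this depends only on the cyclic positions of the endpoints of each strand relative to $i$, and therefore only on $\pi_\Gamma$. Concretely, $I_i=\{\, j : j <_i \pi_\Gamma^{-1}(j)\,\}$ in the cyclic order starting at $i$, with fixed points included according to their color. Conversely, $\pi_\Gamma(i)=j_i$ is read off from $I_{i+1}=I_i\setminus\{i\}\cup\{j_i\}$, so decorated permutations and Grassmann necklaces determine each other.

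For positroids, I would show that the boundaries of dimer covers are exactly the sets $I$ satisfying $I\geq_i I_i$ in the $i$-th cyclically shifted Gale order for all $i$; that is, $\mathscr M_\Gamma$ is the positroid $\mathscr M_{\bm I_\Gamma}$ determined by the necklace. Its lexicographically minimal bases in each cyclic order recover $\bm I_\Gamma$. One inclusion follows from a flow/dimer argument. The boundary-face label $I_i$ is realized by the dimer cover that is extremal with respect to a height function normalized at $f^\partial_{i-1,i}$. Every other dimer cover differs from it by alternating cycles and paths, which can only increase the boundary in the $\leq_i$ order. The reverse inclusion follows from the Lindström–Gessel–Viennot/boundary measurement description, in which Plücker coordinates of the associated point of $\Gr^{\ge0}_{k,n}$ are positive sums over dimer covers.

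The third step is bijectivity. \emph{Surjectivity:} for each decorated permutation, build a reduced plabic graph, for instance from its Le-diagram (a bridge decomposition), and check that its strand permutation is the given one. \emph{Injectivity:} two reduced plabic graphs with the same decorated permutation must be move-equivalent. I expect this to be the main obstacle. I would prove it by transferring to triple-crossing or strand diagrams, using the fact that reduced strand diagrams with the same connectivity are related by $2\leftrightarrow2$ and $3\leftrightarrow3$ moves (a Thurston-type lemma). The proof is by induction on the number of crossings: find a boundary strand that can be pulled to the boundary through a "lens"-free region, remove it via a bridge, and apply induction. Finally, I would check that each such moves on strand diagrams corresponds to a sequence of plabic graph moves.
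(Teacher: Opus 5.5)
The paper gives no proof of this statement; it only cites Postnikov. Your outline for decorated permutations and Grassmann necklaces is a reasonable sketch of the standard argument. That part covers move invariance of strands, the formula $I_i=\{j : j<_i\pi_\Gamma^{-1}(j)\}$ (which matches the paper's conventions, e.g.\ the example $\pi_\Gamma=465132$), its inverse $j_i=\pi_\Gamma(i)$, existence via Le-diagrams or bridges, and move-equivalence via a Thurston-type argument. The gap is in the positroid part.

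First, the reverse inclusion $\{I : I\ge_i I_i \text{ for all } i\}\subseteq \mathscr M_\Gamma$ does not ``follow from the Lindstr\"om--Gessel--Viennot/boundary measurement description.'' That description only says $\Delta_I(V)=Z_I$ is a positive sum over dimer covers with boundary $I$. In other words, $\Delta_I(V)\neq 0$ if and only if such a cover exists, which restates the question rather than answering it. It gives no way to produce a dimer cover with a prescribed boundary from the Gale inequalities. This inclusion is Oh's theorem, that positroids are intersections of cyclically shifted Schubert matroids. Its proof needs a genuine construction, for example explicit families of non-crossing paths in a Le-network.

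Second, and more seriously, you never show that $\Gamma\mapsto\mathscr M_\Gamma$ is \emph{surjective} onto positroids. In this paper a positroid is \emph{any} matroid $\mathscr M_V$ of a point $V\in\Gr_{k,n}^{\ge 0}$. Your surjectivity step only builds a reduced graph for each decorated permutation. Your identification $\mathscr M_\Gamma=\mathscr M_{\bm I_\Gamma}$, even if granted, concerns only matroids already known to come from plabic graphs. To close the argument you need one of the following:
\begin{enumerate}
\item[(i)] Postnikov's theorem that every point of $\Gr_{k,n}^{\ge 0}$ lies in the image of the boundary measurement map of some reduced plabic graph. This is the Le-diagram cell decomposition, and it gives $\mathscr M_V=\mathscr M_\Gamma$ for some $\Gamma$.
\item[(ii)] The fact that an arbitrary positroid is determined by its Grassmann necklace, i.e.\ by its $\le_i$-minimal bases. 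This is again Postnikov/Oh, and nothing in your outline yields it.
\end{enumerate}

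Injectivity on move classes is fine as you set it up. Granting your extremal-matching argument, $I_i$ is the $\le_i$-minimal element of $\mathscr M_\Gamma$. So $\mathscr M_\Gamma$ recovers $\bm I_\Gamma$, hence $\pi^:_\Gamma$, hence the move class.
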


\subsection{Dimer models}

An \emph{edge weight} on a plabic graph $\Gamma$ is a function 
\[
\wt: E(\Gamma) \rightarrow \Rpos.
\]

We say that two edge weights $\wt_1$ and $\wt_2$ are \emph{gauge equivalent} if there exists a function 
\[
g: V(\Gamma) \rightarrow \Rpos
\]
satisfying $g(u_i^\partial)=1$ for $i \in [n]$ such that 
\[
\wt_1(e) = g(b)^{-1} \wt_2(e) g(w)
\qquad \text{for all } e = bw \in E(\Gamma).
\]
We denote the gauge equivalence class of a weight $\wt$ by $[\wt]$. For any face $f$ of $\Gamma$ delimited by a sequence of edges
    \[
    b_1 \xrightarrow{e_1} w_1 \xrightarrow{e_2} b_2 \xrightarrow{e_3} \cdots \xrightarrow{e_{2k-2}} b_k \xrightarrow{e_{2k-1}} w_k \xrightarrow{e_{2k}} b_1,
    \]
    the function
    \[
    X_f(\wt):= \frac{\wt(e_1) \cdots \wt(e_{2k-1})}{\wt(e_2) \cdots \wt(e_{2k})}
    \]
    is gauge-invariant. These functions are called \emph{$X$-cluster variables}.

    A \emph{dimer model} is a pair $(\Gamma,[\wt])$ where $\Gamma$ is a reduced plabic graph and $[\wt]$ is a gauge equivalence class of edge weights on $\Gamma$. The $X$-cluster variables provide coordinates on the space of edge weights modulo gauge equivalence up to the unique relation $\prod_{f \in F(\Gamma)}X_f=1$. Let 
    \[
    \mathcal X_\Gamma^{>0} \cong \Rpos^{|F(\Gamma)|-1}
    \]
    denote the \emph{space of dimer models} with underlying graph $\Gamma$. Each move $\Gamma \to \Gamma'$ induces a bijection between the corresponding spaces of dimer models (see \cref{fig:moves:X})
\[
 \mathcal X_\Gamma^{>0} \xrightarrow[]{\sim} \mathcal X_{\Gamma'}^{>0}.
\]
Given a positroid $\mathscr{M}$, the \emph{space of dimer models with positroid $\mathscr{M}$} is defined as
\[
\mathcal{X}_{\mathscr{M}}^{>0}
:= 
\bigsqcup_{\Gamma:\mathscr{M}_\Gamma=\mathscr{M} }
\mathcal{X}_\Gamma^{>0}
\!\Big/\! \text{moves}.
\]

\begin{figure}[ht]
\centering

\begin{tikzpicture}[scale=0.6]
\def\ep{0.3}

\begin{scope}[shift={(-17,0)},rotate=135]
  \def\r{2};
  \fill[black!5] (0,0) circle (1.05*\r cm);
  \coordinate[wvert] (n1) at (0:\r);
  \coordinate[wvert] (n2) at (90:\r);
  \coordinate[wvert] (n3) at (180:\r);
  \coordinate[wvert] (n4) at (270:\r);
  \coordinate[bvert] (b1) at (0:0.5*\r);
  \coordinate[bvert] (b2) at (180:0.5*\r);
  \draw[-]
    (n1) -- (b1) -- (n2) -- (b2) -- (n4) -- (b1)
    (b2) -- (n3);
  \node at (45:0.8*\r)  {$X_3$};
  \node at (135:0.8*\r) {$X_4$};
  \node at (225:0.8*\r) {$X_1$};
  \node at (315:0.8*\r) {$X_2$};
  \node at (0, 0) {$X_0$};
\end{scope}

\begin{scope}[shift={(-9.5,0)},rotate=45]
  \def\r{2};
  \fill[black!5] (0,0) circle (1.05*\r cm);
  \coordinate[wvert] (n1) at (0:\r);
  \coordinate[wvert] (n2) at (90:\r);
  \coordinate[wvert] (n3) at (180:\r);
  \coordinate[wvert] (n4) at (270:\r);
  \coordinate[bvert] (b1) at (0:0.5*\r);
  \coordinate[bvert] (b2) at (180:0.5*\r);
  \draw[-]
    (n1) -- (b1) -- (n2) -- (b2) -- (n4) -- (b1)
    (b2) -- (n3);
  \node at (45:\r)    {\scriptsize $X_2(1+X_0^{-1})^{-1}$};
  \node at (135:1.2*\r){\scriptsize $X_3(1+X_0)$};
  \node at (225:1*\r) {\scriptsize $X_4(1+X_0^{-1})^{-1}$};
  \node at (315:1.2*\r){\scriptsize $X_1(1+X_0)$};
  \node at (0, 0)     {\scriptsize $X_0^{-1}$};
\end{scope}

\node at (-14,0) {$\leftrightarrow$};
\node at (-14,-3) {(a)};

\draw[] (-5,-3) -- (-5,3);

\begin{scope}[shift={(-1,0)}, rotate=-45]
  \def\r{2};
  \fill[black!5] (0,0) circle (\r cm);
  \coordinate[wvert] (n1) at (0:\r);
  \coordinate[wvert] (n2) at (90:\r);
  \coordinate[wvert] (n3) at (180:\r);
  \coordinate[wvert] (n4) at (270:\r);
  \coordinate[bvert] (n5) at (0,0);
  \draw[] (n5)--(n1) (n5)--(n2) (n5)--(n3) (n5)--(n4);
  \node at (45:0.8*\r)  {$X_1$};
  \node at (135:0.8*\r) {$X_2$};
  \node at (225:0.8*\r) {$X_3$};
  \node at (315:0.8*\r) {$X_4$};
\end{scope}

\begin{scope}[shift={(5,0)},rotate=-45]
  \def\r{2};
  \fill[black!5] (0,0) circle (\r cm);
  \coordinate[wvert] (n1) at (0:\r);
  \coordinate[wvert] (n2) at (90:\r);
  \coordinate[wvert] (n3) at (180:\r);
  \coordinate[wvert] (n4) at (270:\r);
  \coordinate[wvert] (n5) at (0,0);
  \coordinate[bvert] (b1) at (-0.5,0.5);
  \coordinate[bvert] (b2) at (0.5,-0.5);
  \draw[-]
    (n1)--(b2)--(n4)
    (n2)--(b1)--(n3)
    (b1)--(n5)--(b2);
  \node at (45:0.8*\r)  {$X_1$};
  \node at (135:0.8*\r) {$X_2$};
  \node at (225:0.8*\r) {$X_3$};
  \node at (315:0.8*\r) {$X_4$};

\end{scope}

\node at (2,0) {$\leftrightarrow$};
\node at (2,-3) {(b)};	
\end{tikzpicture}
\caption{Transformation of $X$-cluster variables under (a) spider move and (b) contraction-uncontraction move.}
\label{fig:moves:X}
\end{figure}

    Let $\wt$ be an edge weight. For $I \in \binom{[n]}{k}$, let 
    \[
    Z_I(\wt):=\sum_{M: \partial M=I } \prod_{e \in M}\wt(e)
    \]
     denote the \emph{partition function} for dimer covers with boundary $I$. The map 
    \[
    \operatorname{Meas}_\Gamma: \mathcal X_\Gamma^{>0} \rightarrow \mathbb{P}^{\binom{n}{k}-1}, \qquad [\wt] \mapsto (Z_I(\wt))_{I \in \binom{[n]}{k}}
    \]
    is called \emph{boundary measurement}. Here, $\wt$ denotes any edge weight representing the gauge equivalence class $[\wt]$. Different choices of representatives result in the same vector $(Z_I(\wt))_{I \in \binom{[n]}{k}}$ modulo rescaling, and therefore the point $(Z_I(\wt))_{I \in \binom{[n]}{k}} \in \mathbb{P}^{\binom{n}{k}-1}$ is the same.

\begin{theorem}[Postnikov~\cite{Postnikov2006}] \label{thm:postnikov_boundary_meas}
    Let $\Gamma$ be a reduced plabic graph and let $\mathscr{M}=\mathscr{M}_\Gamma$ be its positroid. Then 
    \[
     \operatorname{Meas}_\Gamma: \mathcal X_\Gamma^{>0} \xrightarrow[]{\sim} \Pi_{\mathscr{M}}^{>0}
    \]
    is a bijection. Moreover, $\operatorname{Meas}_\Gamma$ is compatible with moves: for any move $\Gamma \to \Gamma'$, the diagram
\[
\begin{tikzcd}
\mathcal X_\Gamma^{>0}
  \arrow[rr, "\sim"] 
  \arrow[dr, swap, "\operatorname{Meas}_\Gamma"] 
& 
& 
\mathcal X_{\Gamma'}^{>0}
  \arrow[dl, "\operatorname{Meas}_{\Gamma'}"] 
\\
& \Pi_{\mathscr{M}}^{>0} &
\end{tikzcd}
\]
commutes. Therefore, the maps $(\operatorname{Meas}_\Gamma)_{\Gamma:\mathscr{M}_\Gamma =\mathscr{M}}$ glue together to a bijection
\[
\operatorname{Meas}_{\mathscr{M}} : \mathcal X_{\mathscr{M}}^{>0} \xrightarrow[]{\sim} \Pi_{\mathscr{M}}^{>0}.
\]
\end{theorem}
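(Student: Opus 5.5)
The plan is to prove the bijectivity of $\operatorname{Meas}_\Gamma$ by induction on the number of faces, using the standard bridge decomposition of reduced plabic graphs. I would then check compatibility with moves as a separate, local computation.

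\emph{Well-definedness and positivity.} First I would check that the image lies in $\Gr_{k,n}$ and that its matroid is exactly $\mathscr M_\Gamma$. For this, I realize $Z_I(\wt)$ as a maximal minor of a boundary measurement matrix. Choose a perfect orientation of $\Gamma$ whose source set is a dimer cover boundary $I_0\in\mathscr M_\Gamma$. The standard bijection between dimer covers and flows identifies $Z_I/Z_{I_0}$ with the flow partition function from $I_0\setminus I$ to $I\setminus I_0$. By Lindstr\"om--Gessel--Viennot (Talaska's version for planar networks with cycles), these flow sums are the maximal minors of the $k\times n$ matrix $A$ of path generating functions, which has an identity in the columns $I_0$ and signs adjusted by the planarity convention. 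Hence $(Z_I)$ satisfies the Pl\"ucker relations and is nonnegative. Moreover $Z_I>0$ exactly when some dimer cover has boundary $I$, that is, when $I\in\mathscr M_\Gamma$. Gauge invariance is immediate, since gauging at an interior vertex multiplies every $Z_I$ by the same scalar.

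\emph{Compatibility with moves.} This is a finite local check. For contraction and uncontraction, dimer covers of the two graphs correspond bijectively with equal weights once the weights of the new edges are chosen as in \cref{fig:moves:X}. For the spider move, I would fix the dimer configuration outside the square and sum over the local completions in the two graphs. Using the transformation in \cref{fig:moves:X}(a), a direct computation with the four boundary vertices of the local picture shows that the local partition functions agree up to a common factor, for each of the possible boundary conditions. Therefore the whole vector $(Z_I)$ changes by a scalar.

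\emph{Bijectivity.} I expect this to be the main obstacle, and I would handle it by induction. Since both sides are invariant under moves, I may pick any convenient reduced $\Gamma$ in the move class, which is allowed by \cref{thm:post_bijections}; I would take a BCFW/Le-diagram bridge graph. Adding a bridge between adjacent boundary strands $i,i+1$ acts on $A$ by the column operation $x_{i+1}\mapsto x_{i+1}+t\,x_i$ with $t>0$. It maps $\Pi^{>0}_{\mathscr M'}\times\Rpos$ bijectively onto $\Pi^{>0}_{\mathscr M}$, where $\mathscr M'$ is the positroid of the smaller graph. The inverse is obtained by recovering
\[
t=\frac{\Delta_{J\cup\{i+1\}}}{\Delta_{J\cup\{i\}}}
\]
for a suitable $J$ determined by the Grassmann necklace, and checking that the resulting point again lies in the smaller cell. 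Adding lollipops handles the fixed points. The face count increases by one at each bridge, matching the dimension count $\Rpos^{|F(\Gamma)|-1}$, and the new face weight $X_f$ corresponds to $t$ up to a monomial in the previous $X$'s. Induction then gives the bijection. The delicate point is verifying positivity of the inverse step, i.e.\ that subtracting $t\,x_i$ keeps all minors in $\mathscr M'$ positive and kills exactly the rest. I would use the three-term Pl\"ucker relations together with the necklace description of $\mathscr M'$ to establish this.

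Finally, the gluing statement follows formally. Once each $\operatorname{Meas}_\Gamma$ is a bijection commuting with the move maps, the maps descend to the colimit $\mathcal X^{>0}_{\mathscr M}$, and the induced map is a bijection because each chart surjects onto $\Pi^{>0}_{\mathscr M}$ and the identifications are exactly those given by moves.
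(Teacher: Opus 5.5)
The paper gives no proof of this statement; it is quoted from Postnikov. Postnikov first proves bijectivity for Le-networks, a bridge-type family, and then transports it along moves, so your overall architecture mirrors his. Your first two steps (Talaska's flow formula for the image, and the local checks for moves) are fine, as are the reduction to one convenient graph per move class, injectivity of the bridge step, and the final gluing.

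\textbf{The gap.} The gap is exactly the step you flag as delicate and then leave as a plan. You need surjectivity of $(V,t)\mapsto Vx_i(t)$ from $\Pi^{>0}_{\mathscr M'}\times\Rpos$ onto $\Pi^{>0}_{\mathscr M}$, i.e.\ that $Wx_i(-t)\in\Pi^{>0}_{\mathscr M'}$ for every $W\in\Pi^{>0}_{\mathscr M}$. Given your first two steps, this is the whole content of the theorem, and ``three-term Pl\"ucker relations plus the necklace'' does not deliver it as stated.

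For $K$ avoiding $i,i+1$ one has $\Delta_{K\cup\{i+1\}}(Wx_i(-t))=\Delta_{K\cup\{i+1\}}(W)-t\,\Delta_{K\cup\{i\}}(W)$, and all other minors are unchanged. So with $t=\Delta_{J\cup\{i+1\}}(W)/\Delta_{J\cup\{i\}}(W)$ you need
\[
\Delta_{K\cup\{i+1\}}(W)\,\Delta_{J\cup\{i\}}(W)\ \ge\ \Delta_{J\cup\{i+1\}}(W)\,\Delta_{K\cup\{i\}}(W)
\]
for \emph{all} such $K$, with equality precisely when $K\cup\{i+1\}\notin\mathscr M'$. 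A three-term relation only compares sets differing in one element. For $K=L\cup\{b\}$ and $J=L\cup\{a\}$, with $a$ preceding $b$ in the cyclic order starting at $i+1$, the difference equals $\Delta_{L\cup\{i,i+1\}}\Delta_{L\cup\{a,b\}}\ge 0$. To reach a general $K$ you would need a chain of such exchanges from $J=I_{i+1}\setminus\{i+1\}$ along which the relevant minors of $W$ stay nonzero, which is not automatic inside an arbitrary cell. The equality clause also requires knowing exactly which products $\Delta_{L\cup\{i,i+1\}}\Delta_{L\cup\{a,b\}}$ vanish.

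\textbf{A possible repair.}
\begin{enumerate}
\item Let $\sigma_i$ replace $i$ by $i+1$ in sets containing $i$ but not $i+1$. Define $t$ as the minimum of $\Delta_{K\cup\{i+1\}}(W)/\Delta_{K\cup\{i\}}(W)$ over $K$ with $\Delta_{K\cup\{i\}}(W)>0$. These ratios are positive because $\mathscr M=\mathscr M'\cup\sigma_i(\mathscr M')$ is closed under $\sigma_i$.
\item Then $V=Wx_i(-t)$ is totally nonnegative for free. Its matroid $\mathscr N$ is a proper subset of $\mathscr M$, and $\mathscr M=\mathscr N\cup\sigma_i(\mathscr N)$.
\item Since $\sigma_i$ moves an element to its immediate successor in every cyclic order except the one starting at $i+1$, the positroids $\mathscr N$, $\mathscr M'$ and $\mathscr M$ share all Grassmann necklace entries except possibly $I_{i+1}$.
\item You must then show that $\mathscr M'$ is the only positroid other than $\mathscr M$ with these entries, which gives $\mathscr N=\mathscr M'$. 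For this, use that positroids are determined by their necklaces (granting, as you already do, \cref{thm:post_bijections}) together with the necklace--decorated-permutation dictionary.
\end{enumerate}
Some argument of this kind has to be written out. Be careful not to cite the bridge-parametrization literature for this step instead: there the bijection is typically deduced from Postnikov's theorem itself, which would make your induction circular.
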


Consider the following action of $\Rpos^n$ on $\mathcal X_\Gamma^{>0}$. For $\bm \lambda = (\lambda_1,\dots,\lambda_n) \in \Rpos^n$,
    \[
\bm \lambda \cdot \wt(e) := \begin{cases}
\lambda_i \wt(e), &\text{if $e$ is the boundary edge incident to $u_i^\partial$ and a white internal vertex},\\
\lambda_i^{-1} \wt(e), &\text{if $e$ is the boundary edge incident to $u_i^\partial$ and a black internal vertex},\\
\wt(e),&\text{otherwise}. 
\end{cases}
    \]
The action is compatible with moves, and therefore gives an action of $\Rpos^n$ on $\mathcal X_\mathscr{M}^{>0}$. Moreover, the definition is such that $\operatorname{Meas}_\Gamma$ and $\operatorname{Meas}_\mathscr{M} $ are equivariant.

  \subsection{\texorpdfstring{$A$}{A}-networks}

    An \emph{$A$-network} is a pair $(\Gamma,A)$ where $\Gamma$ is a reduced plabic graph and 
    \(
    A: F(\Gamma) \rightarrow \Rpos
    \)
    is a function defined modulo rescaling by $\Rpos$. Let 
    \[
    \mathcal A_\Gamma^{>0} \cong \Rpos^{|F(\Gamma)|-1}
    \]
    denote the \emph{space of $A$-networks with underlying plabic graph $\Gamma$}.

\begin{figure}[ht]
\centering
\begin{tikzpicture}[scale=0.6]
    \def\ep{0.3}
    \begin{scope}[shift={(-10,0)},rotate=135]
      \def\r{2};
      \fill[black!5] (0,0) circle (1.05*\r cm);
      \coordinate[wvert] (n1) at (0:\r);
      \coordinate[wvert] (n2) at (90:\r);
      \coordinate[wvert] (n3) at (180:\r);
      \coordinate[wvert] (n4) at (270:\r);
      \coordinate[bvert] (b1) at (0:0.5*\r);
      \coordinate[bvert] (b2) at (180:0.5*\r);
      \draw[-]
        (n1) -- (b1) -- (n2) -- (b2) -- (n4) -- (b1)
        (b2) -- (n3);
      \node at (45:0.8*\r)  {$A_{12}$};
      \node at (135:0.8*\r) {$A_{14}$};
      \node at (225:0.8*\r) {$A_{23}$};
      \node at (315:0.8*\r) {$A_{34}$};
      \node at (0, 0) {$A_{13}$};
    \end{scope}

    \begin{scope}[shift={(-3,0)},rotate=45]
      \def\r{2};
      \fill[black!5] (0,0) circle (1.05*\r cm);
      \coordinate[wvert] (n1) at (0:\r);
      \coordinate[wvert] (n2) at (90:\r);
      \coordinate[wvert] (n3) at (180:\r);
      \coordinate[wvert] (n4) at (270:\r);
      \coordinate[bvert] (b1) at (0:0.5*\r);
      \coordinate[bvert] (b2) at (180:0.5*\r);
      \draw[-]
        (n1) -- (b1) -- (n2) -- (b2) -- (n4) -- (b1)
        (b2) -- (n3);
      \node at (45:\r)    { $A_{34}$};
      \node at (135:1.2*\r){ $A_{12}$};
      \node at (225:1*\r) { $A_{14}$};
      \node at (315:1.2*\r){ $A_{23}$};
      \node at (0, 0)     {$A_{24}$};
    
      \node at (-3,-2.8) {$A_{24}:=\frac{A_{12} A_{34}+A_{14} A_{23}}{A_{13}}$};
    \end{scope}

    \node at (-7,0) {$\leftrightarrow$};
    \node at (-7,-4) {(a)};

    \draw[] (1,-3) -- (1,3);

    \begin{scope}[shift={(5,0)}, rotate=-45]
      \def\r{2};
      \fill[black!5] (0,0) circle (\r cm);
      \coordinate[wvert] (n1) at (0:\r);
      \coordinate[wvert] (n2) at (90:\r);
      \coordinate[wvert] (n3) at (180:\r);
      \coordinate[wvert] (n4) at (270:\r);
      \coordinate[bvert] (n5) at (0,0);
      \draw[] (n5)--(n1) (n5)--(n2) (n5)--(n3) (n5)--(n4);
      \node at (45:0.8*\r)  {$A_1$};
      \node at (135:0.8*\r) {$A_2$};
      \node at (225:0.8*\r) {$A_3$};
      \node at (315:0.8*\r) {$A_4$};
    \end{scope}
    
    \begin{scope}[shift={(12,0)},rotate=-45]
      \def\r{2};
      \fill[black!5] (0,0) circle (\r cm);
      \coordinate[wvert] (n1) at (0:\r);
      \coordinate[wvert] (n2) at (90:\r);
      \coordinate[wvert] (n3) at (180:\r);
      \coordinate[wvert] (n4) at (270:\r);
      \coordinate[wvert] (n5) at (0,0);
      \coordinate[bvert] (b1) at (-0.5,0.5);
      \coordinate[bvert] (b2) at (0.5,-0.5);
      \draw[-]
        (n1)--(b2)--(n4)
        (n2)--(b1)--(n3)
        (b1)--(n5)--(b2);
      \node at (45:0.8*\r)  {$A_1$};
      \node at (135:0.8*\r) {$A_2$};
      \node at (225:0.8*\r) {$A_3$};
      \node at (315:0.8*\r) {$A_4$};
    
    \end{scope}

    \node at (8.5,0) {$\leftrightarrow$};
    \node at (9,-4) {(b)};	
    
\end{tikzpicture}
\caption{Transformation of $A$-cluster variables under (a) spider move and (b) contraction-uncontraction move.}
\label{fig:moves:A}
\end{figure}

\begin{figure}[ht]
\begin{tikzpicture}[scale=0.6]
\def\ep{0.3}

\begin{scope}[shift={(-3,0)},rotate=135]
  \def\r{2};
  \fill[black!5] (0,0) circle (1.05*\r cm);
  \coordinate[wvert] (n1) at (0:\r);
  \coordinate[wvert] (n2) at (90:\r);
  \coordinate[wvert] (n3) at (180:\r);
  \coordinate[wvert] (n4) at (270:\r);
  \coordinate[bvert] (b1) at (0:0.5*\r);
  \coordinate[bvert] (b2) at (180:0.5*\r);
  \draw[-]
    (n1) -- (b1) -- (n2) -- (b2) -- (n4) -- (b1)
    (b2) -- (n3);
    

  \node at (45:0.8*\r)  {$Skl$};
  \node at (135:0.8*\r)  {$Sjk$};
  \node at (225:0.8*\r)  {$Sij$};
  \node at (315:0.8*\r)  {$Sil$};
  \node at (0, 0) {$Sik$};

\end{scope}

\begin{scope}[shift={(4,0)},rotate=45]
  \def\r{2};
  \fill[black!5] (0,0) circle (1.05*\r cm);
  \coordinate[wvert] (n1) at (0:\r);
  \coordinate[wvert] (n2) at (90:\r);
  \coordinate[wvert] (n3) at (180:\r);
  \coordinate[wvert] (n4) at (270:\r);
  \coordinate[bvert] (b1) at (0:0.5*\r);
  \coordinate[bvert] (b2) at (180:0.5*\r);
  \draw[-]
    (n1) -- (b1) -- (n2) -- (b2) -- (n4) -- (b1)
    (b2) -- (n3);

  \node at (45:0.8*\r)  {$Sil$};
  \node at (135:0.8*\r)  {$Skl$};
  \node at (225:0.8*\r)  {$Sjk$};
  \node at (315:0.8*\r)  {$Sij$};
  \node at (0, 0) {$Sjl$};
\end{scope}

\draw[] (7,-2) -- (7,2);
\node at (0.5,0) {$\leftrightarrow$};		
\node at (0.5,-3) {(a)};	
\node at (13.5,-3) {(b)};	

\begin{scope}[shift={(10,0)}, rotate=-45]
  \def\r{2};
  \fill[black!5] (0,0) circle (\r cm);
  \coordinate[wvert] (n1) at (0:\r);
  \coordinate[wvert] (n2) at (90:\r);
  \coordinate[wvert] (n3) at (180:\r);
  \coordinate[wvert] (n4) at (270:\r);
  \coordinate[bvert] (n5) at (0,0);

  \draw[] (n5) -- (n1)
          (n5) -- (n2)
          (n5) -- (n3)
          (n5) -- (n4);

\node at (45:0.8*\r)  {$Sk$};
  \node at (135:0.8*\r)  {$Sj$};
  \node at (225:0.8*\r)  {$Sl$};
  \node at (315:0.8*\r)  {$Si$};
  
\end{scope}

\node at (13.5,0) {$\leftrightarrow$};

\begin{scope}[shift={(17,0)},rotate=-45]
  \def\r{2};
  \fill[black!5] (0,0) circle (\r cm);
  \coordinate[wvert] (n1) at (0:\r);
  \coordinate[wvert] (n2) at (90:\r);
  \coordinate[wvert] (n3) at (180:\r);
  \coordinate[wvert] (n4) at (270:\r);
  \coordinate[wvert] (n5) at (0,0);

  \coordinate[bvert] (b1) at (-0.5,0.5);
  \coordinate[bvert] (b2) at (0.5,-0.5);

  \draw[-]
    (n1) -- (b2) -- (n4)
    (n2) -- (b1) -- (n3)
    (b1) -- (n5) -- (b2);

\node at (45:0.8*\r)  {$Sk$};
  \node at (135:0.8*\r)  {$Sj$};
  \node at (225:0.8*\r)  {$Sl$};
  \node at (315:0.8*\r)  {$Si$};

\end{scope}  

\end{tikzpicture}
\caption{(a) Spider move and (b) contraction-uncontraction move with face labels.}
\label{fig:moves_labels_plabic}
\end{figure}

Each move $\Gamma \to \Gamma'$ induces a bijection between the corresponding spaces of $A$-networks
\[
\mathcal A_\Gamma^{>0} \xrightarrow[]{\sim} \mathcal A_{\Gamma'}^{>0}
\]
as shown in \cref{fig:moves:A}. Given a positroid $\mathscr{M}$, we define the \emph{space of $A$-networks with positroid $\mathscr{M}$} to be
\[
 \mathcal{A}_{\mathscr{M}}^{>0}
:= 
\bigsqcup_{\Gamma:\mathscr{M}_\Gamma=\mathscr{M} }
\mathcal{A}_\Gamma^{>0}
\!\Big/\! \text{moves}.
\]

Recall that Pl\"ucker coordinates are defined modulo rescaling. For any reduced plabic graph $\Gamma$ with $\mathscr{M}_\Gamma = \mathscr{M}$, we define a map
    \[
    \bm F_\Gamma: \Pi^{>0}_{\mathscr{M}} \rightarrow \mathcal A_\Gamma^{>0}, \qquad V \mapsto (\Delta_{\bm T(f)}(V))_{f \in F(\Gamma)}
    \]
assigning to each point in the positroid cell $\Pi^{>0}_{\mathscr{M}}$ an $A$-network.

The following is due to Scott~\cite{scott} for the uniform positroid $\mathscr{M}=\binom{[n]}{k}$ and Muller--Speyer~\cite{MullerSpeyer} in general.
\begin{theorem}\label{thm:scott_MS}
    For any reduced plabic graph $\Gamma$ with $\mathscr{M}_\Gamma = \mathscr{M}$, the map
    \[
    \bm F_\Gamma: \Pi^{>0}_{\mathscr{M}} \xrightarrow[]{\sim} \mathcal A_\Gamma^{>0}
    \]
    is a bijection. Moreover, $\bm F_\Gamma$ is compatible with moves in the following sense. For any move $\Gamma \to \Gamma'$, the diagram
\begin{equation}
    \begin{tikzcd}
\mathcal A_\Gamma^{>0}
    \arrow[rr, "\sim"]
&&
\mathcal A_{\Gamma'}^{>0}\\
& \Pi_{\mathscr{M}}^{>0}
    \arrow[ul, swap, "\bm F_\Gamma"]
    \arrow[ur, "\bm F_{\Gamma'}"]
& \\
\end{tikzcd}. \label{eq:comm:moves_A}
\end{equation}
commutes. Therefore, the maps $(\bm F_\Gamma)_{\Gamma: \mathscr{M}_\Gamma=\mathscr{M}}$ glue together to a bijection
\[
\bm F_{\mathscr{M}} : \Pi_{\mathscr{M}}^{>0} \xrightarrow[]{\sim} \mathcal A_{\mathscr{M}}^{>0}.
\]
\end{theorem}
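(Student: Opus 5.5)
This is Theorem~\ref{thm:scott_MS}, which is cited from Scott and Muller--Speyer. The plan is to reduce the statement to two ingredients: compatibility of $\bm F_\Gamma$ with moves, and bijectivity for a single convenient graph in each move class.

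\textbf{Compatibility with moves.} I would check this first, since it is purely local.
\begin{itemize}
\item For the contraction--uncontraction move in \cref{fig:moves_labels_plabic}(b), the strands are unchanged up to isotopy. Hence every face keeps its target label, and the diagram~\eqref{eq:comm:moves_A} commutes trivially.
\item For the spider move in \cref{fig:moves_labels_plabic}(a), the four outer faces keep their labels $Sij, Sjk, Skl, Sil$. The central label changes from $Sik$ to $Sjl$. Commutativity then amounts to the three-term Pl\"ucker relation
\[
\Delta_{Sik}\Delta_{Sjl}=\Delta_{Sij}\Delta_{Skl}+\Delta_{Sil}\Delta_{Sjk},
\]
which matches the exchange rule in \cref{fig:moves:A}(a) (with $i<j<k<l$ cyclically).
\end{itemize}
By \cref{thm:post_bijections}, any two reduced $\Gamma$ with positroid $\mathscr M$ are move-equivalent. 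So once bijectivity holds for one $\Gamma$, it holds for all of them, and the maps glue to a bijection $\bm F_{\mathscr M}$.

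\textbf{Well-definedness.} One must check that $\Delta_{\bm T(f)}(V)>0$ for every face $f$ and every $V\in\Pi^{>0}_{\mathscr M}$, i.e.\ that every face label lies in $\mathscr M$. I would show this by exhibiting, for each face $f$, a dimer cover $M$ with $\partial M=\bm T(f)$. The route is Postnikov's perfect orientations, or equivalently the extremal matchings of Muller--Speyer: the minimal matching with respect to the face $f$.

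\textbf{Bijectivity.} I would use \cref{thm:postnikov_boundary_meas} to identify $\Pi^{>0}_{\mathscr M}$ with $\mathcal X^{>0}_\Gamma$, and prove that the composite $\bm F_\Gamma\circ\operatorname{Meas}_\Gamma$ is a bijection $\mathcal X_\Gamma^{>0}\to\mathcal A_\Gamma^{>0}$. Both sides are $\Rpos^{|F(\Gamma)|-1}$. The key claim, which is the content of Muller--Speyer's twist theorem, is that each face variable $X_f$ is a Laurent monomial in the values $\Delta_{\bm T(g)}$ after applying the twist. Then $\bm F_\Gamma$ becomes the composition of the twist automorphism of $\Pi^{>0}_{\mathscr M}$ with a monomial map whose exponent matrix is invertible, and such a composition is a bijection on positive parts.

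\textbf{Main obstacle.} The hard step is the monomial/triangularity statement: for a specific graph, each Pl\"ucker coordinate $\Delta_{\bm T(f)}$ is a positive Laurent monomial in the edge weights. This holds because the minimal matching for $\bm T(f)$ is the unique matching with that boundary in a suitable gauge. It should follow from uniqueness of the extremal matching in the face lattice of matchings of a reduced graph.

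My first attempt would be to prove this for a Le-diagram graph in each move class. There the matching with boundary $\bm T(f)$ is unique, so the argument reduces to a triangular change of variables, from which invertibility follows by induction on the faces of the Le-diagram.
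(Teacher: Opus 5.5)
Your move-compatibility argument (trivial for contraction--uncontraction, the three-term Pl\"ucker relation for the spider move) agrees with what the paper says. So do the well-definedness via a matching with boundary $\bm T(f)$ and the transport of bijectivity along moves. The paper itself does not prove bijectivity; it cites Scott and Muller--Speyer.

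The gap is the step you call the main obstacle. You claim that for some graph in the move class (e.g.\ a Le-graph) each \emph{untwisted} coordinate $\Delta_{\bm T(f)}(\operatorname{Meas}_\Gamma[\wt])$ is a single monomial, because only one matching has boundary $\bm T(f)$. This is false. In \cref{ex:MS_twist_rotated} the central face has target label $24$. The dimer covers with boundary $\{2,4\}$ are exactly the two perfect matchings of the inner square, so $\Delta_{24}=ac+bd$. Up to contraction--uncontraction, which does not change the number of matchings with a given boundary, the only other reduced graph for $\Gr^{>0}_{2,4}$ is the spider-move image. That is the same picture rotated one step, with central label $13$ and again two matchings. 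So no graph in this move class, Le-graph or not, makes $\bm F_\Gamma\circ\operatorname{Meas}_\Gamma$ monomial, let alone triangular. Passing to ``a suitable gauge'' cannot help, since gauge rescales weights without changing the set of matchings. Likewise, extremality of $M_f$ does not make it the only matching with boundary $\bm T(f)$.

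What is true, and is the reason the twist appears at all, is monomiality \emph{after twisting}. Muller--Speyer prove $\bm F_\Gamma\circ\lt\circ\operatorname{Meas}_\Gamma=\lvec{\mathbb M}_\Gamma=p_\Gamma^{-1}$. That is, $\Delta_{\bm T(f)}\bigl(\lt(\operatorname{Meas}_\Gamma[\wt])\bigr)$ equals $1/\wt(M_f)$ up to a common scalar; in the example, $\Delta_{24}$ of the twisted matrix is $1/(bd)$. Combined with $\rt$ and $\lt$ being mutually inverse bijections of $\Pi^{>0}_{\mathscr M}$, this gives $\bm F_\Gamma=p_\Gamma^{-1}\circ\operatorname{Meas}_\Gamma^{-1}\circ\rt$, a composition of bijections. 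Your first formulation of the bijectivity paragraph points at this.

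However, both inputs, monomiality of the twisted face coordinates and invertibility of the twist, are the substantive content of \cref{thm:Muller_Speyer}. Neither can be obtained from a triangularity argument on untwisted Pl\"ucker coordinates. As written, your bijectivity step either reduces to citing Muller--Speyer, which is what the paper does, or rests on a lemma that already fails for $\Gr^{>0}_{2,4}$.
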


The target face labels change under moves as shown in \cref{fig:moves_labels_plabic}. Therefore, the commutativity of \eqref{eq:comm:moves_A} is trivial for the contraction-uncontraction move and is a consequence of the three-term Pl\"ucker relation
\[
\Delta_{Sik} \Delta_{Sjl} = \Delta_{Sij} \Delta_{Skl}+\Delta_{Sil}\Delta_{Sjk}
\]
for the spider move.

The group $\Rpos^n$ acts on $\mathcal A_\Gamma^{>0}$ by 
\[
\bm \lambda \cdot A = \left( \left(\prod_{i \in \mathbf{T}(f)} \lambda_i \right) A_f \right)_{f \in F(\Gamma)}
\]
making the map $\bm F_\Gamma$ equivariant. The action is compatible with moves, and therefore the same is true for $\bm F_\mathscr{M}$.

\subsection{The twist maps}

Let $\mathscr{M}$ be a positroid, let $\bm I_{\mathscr{M}} = (I_1,\dots,I_n)$ be its Grassmann necklace, and let $\Pi_{\mathscr{M}}^{>0}$ denote the corresponding positroid cell.  
The \emph{right twist} $\rt$ is the map
\[
  \rt : \Pi_{\mathscr{M}}^{>0} \rightarrow \Pi_{\mathscr{M}}^{>0}
\]
defined as follows.  
Let $M$ be a $k \times n$ matrix representative of $V \in \Pi_{\mathscr{M}}$, and denote by $M_i$ the $i$-th column of $M$.  
Define a new matrix $\rt(M)$ by the conditions
\begin{equation}
      \langle \rt(M)_i, M_i \rangle = 1, 
  \qquad 
  \langle \rt(M)_i, M_j \rangle = 0 
  \quad \text{for } j \in I_i \setminus \{i\}. \label{eq:def_right_twist}
\end{equation}
Here $\langle \cdot , \cdot \rangle$ is the standard inner product on $\C^k$.
Finally, set
\[
  \rt(V) := \operatorname{rowspan}\!\big(\rt(M)\big).
\]
The \emph{left twist} $\lt$ is defined similarly but using the reverse Grassmann necklace instead in \eqref{eq:def_right_twist}.

\begin{lemma}[Muller--Speyer~\cite{MullerSpeyer}]  \label{lem:MS}
The right twist has the following properties:
\begin{enumerate}
    \item For any $\bm \lambda \in \Rpos^n,$ $\rt(\bm \lambda \cdot V) = \bm \lambda^{-1} \cdot \rt(V)$. 
    \item For any boundary face $f_{i,i+1}^\partial$, $ \Delta_{\bm T(f_{i,i+1}^\partial)}(\rt(V))=\Delta_{\bm T(f_{i,i+1}^\partial)}(V)^{-1}$.
\end{enumerate}
\end{lemma}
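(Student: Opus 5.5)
We have to prove Lemma~\ref{lem:MS}: (1) the twist is anti-equivariant for the column torus, and (2) the twist inverts the Plücker coordinates indexed by the Grassmann necklace. Both follow directly from the defining conditions \eqref{eq:def_right_twist}. Throughout, fix a $k\times n$ representative $M$ of $V$. Each column $\rt(M)_i$ is determined by $k$ linear conditions. These conditions are independent because $I_i\in\mathscr M$, so the columns $(M_j)_{j\in I_i}$ form a basis of $\R^k$ and $\rt(M)_i$ is the corresponding dual basis vector.

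For (1), take $\bm\lambda\in\Rpos^n$. The matrix $M\,\mathrm{diag}(\bm\lambda)$ represents $\bm\lambda\cdot V$ and has columns $\lambda_jM_j$. Set $N_i:=\lambda_i^{-1}\rt(M)_i$. Then $\langle N_i,\lambda_iM_i\rangle=1$ and $\langle N_i,\lambda_jM_j\rangle=0$ for $j\in I_i\setminus\{i\}$. By uniqueness, $\rt(M\,\mathrm{diag}(\bm\lambda))=\rt(M)\,\mathrm{diag}(\bm\lambda)^{-1}$. Taking row spans gives $\rt(\bm\lambda\cdot V)=\bm\lambda^{-1}\cdot\rt(V)$.

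Independence from the chosen representative is the same kind of check. Replace $M$ by $gM$ with $g\in GL_k$. The conditions then force the new columns to be $g^{-T}\rt(M)_i$, so the row span is unchanged. This also shows that rescaling Plücker coordinates is harmless: $\Delta_I(gM)=\det g\,\Delta_I(M)$ and $\Delta_I(g^{-T}\rt(M))=\det g^{-1}\,\Delta_I(\rt(M))$, so the projective relation in (2) is preserved. To make (2) a well-defined statement, I would normalize by choosing representatives with $\Delta_{I}$'s measured against a common scale. The identity is then a statement for the pair of matrices $M,\rt(M)$ linked as above.

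For (2), write $I=\bm T(f^\partial_{i,i+1})=I_{i+1}$. The key claim is that the $k\times k$ matrix $P:=\rt(M)_{I}^{T}M_{I}$ is triangular with $1$s on the diagonal, after ordering $I$ cyclically starting from $i+1$. Its $(a,b)$ entry is $\langle\rt(M)_a,M_b\rangle$ for $a,b\in I$. The diagonal entries equal $1$ by definition. The entry vanishes whenever $b\in I_a\setminus\{a\}$, so it suffices to show the following: for $a,b\in I_{i+1}$ with $b$ cyclically after $a$ in the order $i+1<i+2<\dots<i$, we have $b\in I_a$.

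This is the main combinatorial step, and it uses the necklace property. Moving from $I_{i+1}$ to $I_{a}$, each step $I_{m+1}=I_m\setminus\{m\}\cup\{j_m\}$ removes only the element $m$, where $m$ ranges over $i+1,\dots,a-1$. So an element $b\in I_{i+1}$ lying cyclically at or after $a$ is never removed and survives into $I_a$.

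Thus $P$ is unitriangular and $\det P=1$, which gives $\Delta_I(\rt(M))\,\Delta_I(M)=1$, i.e.\ $\Delta_{I}(\rt(V))=\Delta_{I}(V)^{-1}$. The obstacle is entirely this cyclic-order bookkeeping, in particular handling the wraparound and the case where $a$ is a loop or coloop. In that case $I_a=I_{a+1}$ and the same survival argument applies. Everything else is linear algebra.
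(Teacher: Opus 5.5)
Your proof is correct, and it is the standard Muller--Speyer argument; the paper cites this lemma and does not prove it. Part (1) follows from the uniqueness of the dual-basis solution of \eqref{eq:def_right_twist}. For part (2), the pairing matrix $\rt(M)_I^{T}M_I$ is unitriangular because $I_{i+1}\cap\{a,a+1,\dots,i\}\subseteq I_a$ (cyclic interval) by the necklace exchange property. You also correctly read (2) as the matrix-level identity $\Delta_I(\rt(M))\,\Delta_I(M)=1$ and checked that it is invariant under $M\mapsto gM$.

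One small loose end: your uniqueness claim in (1) needs $i\in I_i$. For a loop $i$, the defining conditions are inconsistent, and one uses the Muller--Speyer convention $\rt(M)_i=0$. This does not matter in the paper's setting, since fixed-point-free matchings have no loops.
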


Let $\mathscr{M}$ be a positroid. There is a canonical map 
\[
p_\mathscr{M}: \mathcal A_{\mathscr{M}}^{>0} \rightarrow \mathcal X_{\mathscr{M}}^{>0},
\]
called the \emph{cluster ensemble map}, defined as follows. Let $\Gamma$ be a reduced plabic graph such that $\mathscr{M}_\Gamma = \mathscr{M}$. Let $A \in \mathcal A_{\Gamma}^{>0}$, let
\[
\wt(e) :=\begin{cases} 1/{A_f A_g}, &\text{if $e$ is internal and adjacent to faces $f$ and $g$},\\
1/{A_{f_{i-1,i}^\partial}}, &\text{if $e$ is adjacent to $u_i^\partial$ and a white internal vertex},\\
1/{A_{f_{i,i+1}^\partial}}, &\text{if $e$ is adjacent to $u_i^\partial$ and a black internal vertex},\\
 \end{cases} 
\]
and define $p_\Gamma(A):=[\wt]$. If $\Gamma \to \Gamma'$ is a move, then the following diagram commutes:
\[
\begin{tikzcd}
\mathcal{A}_{\Gamma}^{>0}
  \arrow[r, "\sim"]
  \arrow[d, "p_{\Gamma}"']
& 
\mathcal{A}_{\Gamma'}^{>0}
  \arrow[d, "p_{\Gamma'}"]
\\
\mathcal{X}_{\Gamma}^{>0}
  \arrow[r, "\sim"]
&
\mathcal{X}_{\Gamma'}^{>0}
\end{tikzcd}.
\]
Thus, the maps $(p_\Gamma)_{\Gamma: \mathscr{M}_\Gamma = \mathscr{M}}$ glue together to give $p_{\mathscr{M}}$.

\begin{theorem}[Muller--Speyer~\cite{MullerSpeyer}] \label{thm:Muller_Speyer}
The right twist is a bijection whose inverse is the left twist. They fit into the following commutative diagram:
\[
\begin{tikzcd}
\mathcal A_{\mathscr{M}}^{>0} 
  \arrow[r, "p_{\mathscr{M}}"] 
& 
\mathcal X_{\mathscr{M}}^{>0} 
  \arrow[d, "\operatorname{Meas}_{\mathscr{M}}"] 
\\
\Pi_{\mathscr{M}}^{>0}
  \arrow[u, "\bm F_{\mathscr{M}}"]   \arrow[r,  bend right=15, swap, "\rt"]
& 
\Pi_{\mathscr{M}}^{>0}
  \arrow[l,  bend right=15, swap, "\lt"]
\end{tikzcd}.
\]
In particular, the boundary measurement map can be inverted as \[\operatorname{Meas}_{\mathscr{M}}^{-1}=p_{\mathscr{M}} \circ \bm F_{\mathscr{M}} \circ \lt.\]
\end{theorem}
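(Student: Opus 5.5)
The statement just before this point is the Muller--Speyer theorem (\cref{thm:Muller_Speyer}). The plan is to prove the identity $\operatorname{Meas}_{\mathscr{M}}\circ p_{\mathscr{M}}\circ \bm F_{\mathscr{M}} = \rt$ and then deduce bijectivity from it.

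\textbf{Reductions.} All three maps are compatible with moves (\cref{thm:postnikov_boundary_meas}, \cref{thm:scott_MS} and the commutative square for $p_\Gamma$). It therefore suffices to fix one reduced plabic graph $\Gamma$ with $\mathscr{M}_\Gamma=\mathscr{M}$ and show $\operatorname{Meas}_\Gamma(p_\Gamma(\bm F_\Gamma(V)))=\rt(V)$ for every $V\in\Pi^{>0}_{\mathscr{M}}$. Since $\operatorname{Meas}_\Gamma$ and $\bm F_\Gamma$ are bijections, the composite $\operatorname{Meas}_\Gamma\circ p_\Gamma\circ\bm F_\Gamma$ will then be identified with $\rt$.

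\textbf{Core identity via flows.} Take a matrix representative $M$ of $V$ and a representative $\rt(M)$ defined by \eqref{eq:def_right_twist}. I would show that for every $I$,
\[
\Delta_I(\rt(M)) \;=\; \frac{1}{\Delta_{I_1}(M)}\cdots \sum_{\partial N = I}\prod_{e\in N}\wt(e), \qquad \wt(e)=\frac{1}{\bm F_\Gamma(V)_f\,\bm F_\Gamma(V)_g},
\]
with the overall prefactor independent of $I$, so that the two sides agree projectively.

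The route to this goes through the Muller--Speyer ``edge vector'' construction:
\begin{enumerate}
\item For each edge $e$ of $\Gamma$, assign the vector $\rt(M)$-column data obtained from the strands through $e$. Concretely, the vector is in the annihilator of $M_j$ for the target labels of the adjacent faces.
\item Check that at each white vertex the adjacent vectors are parallel.
\item Check that at each black vertex the vectors satisfy a linear relation, with coefficients given by ratios of face Plücker coordinates.
\item Read off from these relations that $\rt(M)$ is the boundary measurement of the edge weight $\wt$. This uses Postnikov's/Talaska's realization of boundary measurements via such ``vector relaxations.''
\end{enumerate}
The coefficients in step 3 come from Cramer's rule. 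Each relation reduces to the three-term Plücker relations together with the Grassmann-necklace property that the labels of faces around a vertex differ by single swaps.

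\textbf{Main obstacle.} The delicate step is the local black-vertex relation and the normalization at the boundary. The boundary normalization must reproduce exactly the boundary weights $1/A_{f^\partial_{i-1,i}}$ and $1/A_{f^\partial_{i,i+1}}$. I would check it using \cref{lem:MS}(2) and the torus equivariance \cref{lem:MS}(1), which fixes the boundary gauge.

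\textbf{Inverse.} Finally, to see that $\lt$ is the inverse of $\rt$, I would verify directly that the defining orthogonality relations are symmetric. The reverse necklace conditions for $\lt(\rt(M))$ are satisfied by $M$ itself, up to the column scaling in \cref{lem:MS}(1), using that $I_i$ and $I^{\mathrm{rev}}$ form dual bases under the pairing. Bijectivity of $\rt$ then also follows from the commutative diagram, since each of $\bm F_{\mathscr{M}}$, $p_{\mathscr{M}}$ and $\operatorname{Meas}_{\mathscr{M}}$ is a bijection. For $p_{\mathscr{M}}$ this holds because the face-count dimensions match and the weights determine the $A$-values up to the torus, and the torus action is compatible on both sides.
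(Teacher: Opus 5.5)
The paper does not prove this statement. It is quoted from Muller--Speyer, who phrase it through the matchings $M_f$ and the monomial map $\lvec{\mathbb M}_\Gamma$ (recalled in the proof of \cref{lem:hat_tau_inverse_well_defined}), which is an explicit inverse of $p_\Gamma$. Your route is genuinely different: a linear-algebra, vector-configuration argument. Its core does work.

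Put $\xi_T(v):=\det(M_T,v)$ for a $(k-1)$-set $T$, and give the edge between faces $f,g$ the vector $\xi_{\bm T(f)\cap\bm T(g)}$.
\begin{itemize}
\item Around a white vertex the labels are $S'\rho_1,S'\rho_2,\dots$ with $|S'|=k-1$, so all its edges carry the same vector $\xi_{S'}$.
\item Around a trivalent black vertex the labels are $S\sigma_2\sigma_3,\ S\sigma_1\sigma_3,\ S\sigma_1\sigma_2$ with $|S|=k-2$. The three-term Grassmann--Plücker relation reads $\sum_l \pm A_{F_l}\,\xi_{S\sigma_l}=0$. Dividing by $A_{F_1}A_{F_2}A_{F_3}$ gives $\sum_e\pm\xi_{S_e}/(A_fA_g)=0$, so the coefficients are exactly the internal weights of $p_\Gamma$.
\item At $u_i^\partial$ the edge carries $\xi_{I_i\setminus\{i\}}=\pm\Delta_{I_i}(M)\,\rt(M)_i$. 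This reproduces exactly the boundary weights $1/A_{f^\partial_{i-1,i}}$ (white neighbour) and $1/A_{f^\partial_{i,i+1}}$ (black neighbour).
\end{itemize}
So the boundary normalization you call the main obstacle is automatic. In any case \cref{lem:MS}(1) could not have fixed it: the $\Rpos^n$-action moves the point; it is not a gauge.

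What genuinely remains in the core identity is:
\begin{itemize}
\item reduce to trivalent black vertices by moves;
\item check that the signs in these Plücker relations form a Kasteleyn signing and leave no residual column signs;
\item supply the lemma that boundary vectors of a configuration with $\sum_w K_{bw}x_w=0$ at every black vertex span $\operatorname{Meas}(\wt)$.
\end{itemize}
That last lemma is the Kasteleyn-kernel or vector-relation-configuration picture (as in Affolter--Glick--Pylyavskyy--Ramassamy), not a theorem of Postnikov or Talaska as stated. Compared with Muller--Speyer, you gain that every local identity is literally a Plücker relation. You lose the explicit inverse of $p$.

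That loss shows up in your last paragraph, which has two real gaps.

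\textbf{Bijectivity of $p_{\mathscr M}$.} Saying it holds ``because the face counts match'' is not an argument. A monomial map between positive tori of equal dimension is bijective iff its exponent matrix is nonsingular, and cluster ensemble maps are often degenerate. This nondegeneracy is exactly what $\lvec{\mathbb M}$ supplies in Muller--Speyer. Drop the claim: bijectivity of $p_{\mathscr M}$ is a \emph{consequence} of the theorem.

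\textbf{The inverse step.} It needs a precise lemma. It is neither a duality between $I_i$ and $I^{\mathrm{rev}}$, nor ``$j\in I^{\mathrm{rev}}_i\Rightarrow i\in I_j$''. The latter fails for the rowspan of $M=[\,e_1\ e_2\ e_2\ {-e_1}\,]$ in $\Gr^{\ge0}_{2,4}$, where $I^{\mathrm{rev}}_3=\{1,3\}$ but $I_1=\{1,2\}$. What is true: if $j\in I^{\mathrm{rev}}_i\setminus\{i\}$, then $M_i\in\operatorname{span}\{M_l: l\in I_j\setminus\{j\}\}$. The reason is that $M_j\notin W:=\operatorname{span}(M_{j+1},\dots,M_i)$, which forces the greedy construction of $I_j$ on $[j+1,i]$ to coincide with the greedy basis of $W$, and $W\ni M_i$.

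Hence $M$ satisfies the defining equations of $\lt(\rt(M))$ on the nose; no column rescaling is needed. The solution is unique only because $\rt(M)_{I^{\mathrm{rev}}_i}$ is invertible, i.e.\ $\rt(V)\in\Pi^{>0}_{\mathscr M}$, and you must take that from the core identity. The mirror-image argument, using the mirrored core identity for $\lt$, gives $\rt\circ\lt=\mathrm{id}$. Only then does $\operatorname{Meas}_{\mathscr M}^{-1}=p_{\mathscr M}\circ\bm F_{\mathscr M}\circ\lt$ follow.
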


\begin{figure}[ht]
    \centering
    \begin{tikzpicture}
        \begin{scope}[shift={(0,0)}]
            \def\r{2};
            \fill[black!5] (0,0) circle (1*\r cm);
                \draw[dashed, gray] (0,0) circle (1*\r cm);
            \coordinate[wvert] (w1) at (225:0.5*\r);
            \coordinate[wvert] (w2) at (45:0.5*\r);
            \coordinate[bvert] (b1) at (135:0.5*\r);
            \coordinate[bvert] (b2) at (315:0.5*\r);

            \coordinate[label=left: $u_1^\partial$] (b1') at (225:\r);
            \coordinate[label=right: $u_3^\partial$] (b2') at (45:\r);
            \coordinate[label=left: $u_2^\partial$] (w1') at (135:\r);
            \coordinate[label=right: $u_4^\partial$] (w2') at (315:\r);

            \draw[]
            (w1) -- node[left]{$a$} (b1)
                 -- node[above]{$b$} (w2)
                 -- node[right]{$c$} (b2)
                 -- node[below]{$d$} (w1)
            (w1) -- node[above]{$1$} (b1')
            (w2) -- node[above]{$1$} (b2')
            (b1) -- node[above]{$1$} (w1')
            (b2) -- node[above]{$1$} (w2');

            \node[blue] at (0,0) {$24$};
            \node[blue] at (0:0.8*\r) {$14$};
            \node[blue] at (90:0.8*\r) {$34$};
            \node[blue] at (180:0.8*\r) {$23$};
            \node[blue] at (270:0.8*\r) {$12$};

            \node at (0,-2.5) {(a)};
        \end{scope}

        \begin{scope}[shift={(5,0)}]
            \def\r{2};
            \fill[black!5] (0,0) circle (1*\r cm);
            \draw[dashed, gray] (0,0) circle (1*\r cm);
            \coordinate[wvert] (w1) at (225:0.5*\r);
            \coordinate[wvert] (w2) at (45:0.5*\r);
            \coordinate[bvert] (b1) at (135:0.5*\r);
            \coordinate[bvert] (b2) at (315:0.5*\r);

            \coordinate[label=left: $u_1^\partial$] (b1') at (225:\r);
            \coordinate[label=right: $u_3^\partial$] (b2') at (45:\r);
            \coordinate[label=left: $u_2^\partial$] (w1') at (135:\r);
            \coordinate[label=right: $u_4^\partial$] (w2') at (315:\r);

            \draw[]
            (w1) -- node[left]{$\frac{1}{A_{23}A_{24}}$} (b1)
                 -- node[above]{$\frac{1}{A_{24}A_{34}}$} (w2)
                 -- node[right]{$\frac{1}{A_{14}A_{24}}$} (b2)
                 -- node[below]{$\frac{1}{A_{12}A_{24}}$} (w1)
            (w1) -- node[above]{$\frac{1}{A_{12}}$} (b1')
            (w2) -- node[above]{$\frac{1}{A_{34}}$} (b2')
            (b1) -- node[above]{$\frac{1}{A_{34}}$} (w1')
            (b2) -- node[above]{$\frac{1}{A_{12}}$} (w2');

            \node at (0,-2.5) {(b)};
        \end{scope}

        \begin{scope}[shift={(10,0)}]
            \def\r{2};
            \fill[black!5] (0,0) circle (1*\r cm);
                \draw[dashed, gray] (0,0) circle (1*\r cm);
            \coordinate[wvert] (w1) at (225:0.5*\r);
            \coordinate[wvert] (w2) at (45:0.5*\r);
            \coordinate[bvert] (b1) at (135:0.5*\r);
            \coordinate[bvert] (b2) at (315:0.5*\r);

            \coordinate[label=left: $u_1^\partial$] (b1') at (225:\r);
            \coordinate[label=right: $u_3^\partial$] (b2') at (45:\r);
            \coordinate[label=left: $u_2^\partial$] (w1') at (135:\r);
            \coordinate[label=right: $u_4^\partial$] (w2') at (315:\r);

            \draw[]
            (w1) -- node[left]{$abd$} (b1)
                 -- node[above]{$bd^2$} (w2)
                 -- node[right]{$bcd$} (b2)
                 -- node[below]{$b^2d$} (w1)
            (w1) -- node[above]{$b$} (b1')
            (w2) -- node[above]{$d$} (b2')
            (b1) -- node[above]{$d$} (w1')
            (b2) -- node[above]{$b$} (w2');

            \node at (0,-2.5) {(c)};
        \end{scope}
    \end{tikzpicture}
    \caption{Inverting boundary measurement via the left twist after cyclically rotating the boundary labels.}
    \label{fig:example_MS_twist_rotated}
\end{figure}

\begin{example}\label{ex:MS_twist_rotated}
Consider the reduced plabic graph $\Gamma$ in \cref{fig:example_MS_twist_rotated}(a). Its boundary measurement is
\[
\Delta_{12} = b, \quad
\Delta_{13} = 1,\quad
\Delta_{14} = c,\quad
\Delta_{23} = a,\quad
\Delta_{24} = ac+bd,\quad
\Delta_{34} = d.
\]
Since all Pl\"ucker coordinates are positive, the positroid $\mathscr{M} = \binom{[4]}{2}$ and the positroid cell $\Pi_{\mathscr{M}}^{>0} = \Gr^{>0}_{2,4}$. A matrix representative is given by
\[
M =
\begin{bmatrix}
1 & a & 0 & -d \\
0 & b & 1 & c
\end{bmatrix}.
\]
The reverse Grassmann necklace is
\[
\bm I^{\text{rev}}_\Gamma = (14,12,23,34),
\]
using which the left twist is computed to be
\[
\lt(M) =
\begin{bmatrix}
1 & 0 & -\frac{b}{a} & -\frac{1}{d} \\
\frac{d}{c} & \frac{1}{b} & 1 & 0
\end{bmatrix}.
\]
The twisted Pl\"ucker coordinates are given by
\begin{equation}
    A_{12} = \frac 1 b,\quad
    A_{13} = \frac{ac+bd}{ac},\quad
    A_{14} = \frac 1 c,\quad
    A_{23} = \frac 1 a,\quad
    A_{24} = \frac{1}{bd},\quad
    A_{34} = \frac 1 d.
    \label{eq:eg_MS_twist_rotated}
\end{equation}

The target face labels are shown in \cref{fig:example_MS_twist_rotated}(a), using which the map $p_\Gamma$ is computed as in \cref{fig:example_MS_twist_rotated}(b). Plugging in \eqref{eq:eg_MS_twist_rotated}, we get the edge weights shown in \cref{fig:example_MS_twist_rotated}(c), which are gauge equivalent to the edge weights in \cref{fig:example_MS_twist_rotated}(a).
\end{example}

\section{The positive orthogonal Grassmannian and the Ising model}\label{sec:3}
  Let $n \geq 1$ be a positive integer. We consider the following quadratic form on $\R^{2n}$ 
  \[
    q \colon \R^{2n} \xrightarrow[]{} \R, \quad \bm x = (x_1,\dots,x_{2n}) \mapsto \sum_{i = 1}^{2n} (-1)^{i-1} \ x_{i}^2.
  \]
   We denote by $Q := {\rm diag}(1, -1, 1, \dots 1, -1) \in \R^{2n \times 2n}$ the matrix that represents $q$ in the standard basis. The \emph{orthogonal Grassmannian} $\OGr_{n,2n}$ is the subvariety of the Grassmannian $\Gr_{n,2n}$ that consists of vector spaces $V$ such that 
   \[
        q(\bm x) = 0 \quad \text{for all}~\bm x \in V.
   \]
    It is known that this variety has two isomorphic connected components that are irreducible and cut out in $\Gr_{n,2n}$ by linear equations as follows
    \begin{align*}
        \OGr_{n,2n}^{\pm} := \left\{ V \in \Gr_{n,2n} \colon \Delta_I(V) = \pm \Delta_{I^c}(V) \quad \text{for all } I \in \binom{[2n]}{n}  \right\}.
    \end{align*}
    We define
    \[
    \OGr_{n,2n}^{ \geq 0} := \OGr_{n,2n} \cap \Gr_{n,2n}^{\geq 0} \quad \text{and} \quad  \OGr_{n,2n}^{ > 0} := \OGr_{n,2n} \cap \Gr_{n,2n}^{> 0} \ .
    \]
    Note that $\OGr_{n,2n}^{ \geq 0} \subset \OGr_{n,2n}^{+}$. Intersecting $\OGr_{n,2n}^{ \geq 0}$ with the positroid stratification~\eqref{eq:positroidstratification}, we get an induced stratification
\[
\OGr_{n,2n}^{\geq 0} = \bigsqcup_{\mathscr{M}} \OGr^{\geq 0}_{n,2n} \cap \Pi_{\mathscr{M}}^{>0}.
\]

 Let us denote by $P_n$ the set of fixed-point-free involutions on $[2n]$. Given $\pi \in P_n$, viewing it as a decorated permutation, we have an associated positroid $\mathscr{M}_\pi$ via the bijections in \cref{thm:post_bijections}.

\begin{theorem}[Galashin--Pylyavskyy~\cite{GalashinPylyavskyy}]
The positroid cells that have nonempty intersection with $\OGr_{n,2n}^{\geq 0}$ are precisely those of the form $\Pi_{\mathscr{M}_\pi}^{>0}$ for a fixed-point-free involution $\pi:[2n] \to [2n]$. Thus,
\[
\OGr_{n,2n}^{\geq 0} = \bigsqcup_{\pi \in P_n } (\OGr^{\geq 0}_{n,2n} \cap \Pi_{\mathscr{M}_\pi}^{>0}).
\]
\end{theorem}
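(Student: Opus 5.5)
The statement just above is the Galashin--Pylyavskyy theorem: the positroid cells meeting $\OGr^{\geq 0}_{n,2n}$ are exactly the $\Pi^{>0}_{\mathscr M_\pi}$ with $\pi \in P_n$. I would prove the two inclusions separately, using the characterization of $\OGr^{\geq 0}_{n,2n}$ by the linear relations $\Delta_I = \Delta_{[2n]\setminus I}$.

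\emph{Step 1: the cell's permutation is an involution.} Let $V \in \OGr^{\geq 0}_{n,2n} \cap \Pi^{>0}_{\mathscr M}$. The relation $\Delta_I(V)=\Delta_{I^c}(V)$ gives the matroid symmetry $I \in \mathscr M \iff [2n]\setminus I \in \mathscr M$. Hence $\mathscr M$ equals its complement-dual matroid $\mathscr M^* = \{ I^c : I \in \mathscr M\}$. For positroids, the decorated permutation of $\mathscr M^*$ is $\pi^{-1}$, with the colors of fixed points swapped. I would check this with Grassmann necklaces: complementing and reversing turns $\bm I$ into a necklace whose exchange pattern is the inverse permutation. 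So $\mathscr M = \mathscr M^*$ forces $\pi = \pi^{-1}$, and forces every fixed point to have color equal to its own swap, which is impossible. Therefore $\pi$ is a fixed-point-free involution, i.e.\ $\pi \in P_n$.

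\emph{Step 2: every $\pi \in P_n$ occurs.} Given $\pi \in P_n$, choose a reduced planar Ising graph $G$ whose medial strands realize the matching $\pi$; such graphs exist by the Lam-style classification of medial pairings recalled in the introduction. Fix any positive coupling $J$. By the result quoted in the introduction, $\phi_n(M(G,J))$ lies in $\OGr_{n,2n} \cap \Pi^{>0}_{\mathscr M_\pi}$. This point is totally nonnegative, because the doubling map sends $M(G,J)$ to the boundary measurement of a dimer model on a plabic graph built from $G$ with positive weights, and by \cref{thm:postnikov_boundary_meas} such measurements lie in $\Gr^{\geq 0}$. So the intersection is nonempty.

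The disjoint-union decomposition then follows by intersecting \eqref{eq:positroidstratification} with $\OGr^{\geq 0}_{n,2n}$ and discarding the empty pieces.

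\emph{Main obstacle.} I expect the hard part to be Step 2, namely identifying the positroid of $\phi_n(M(G,J))$ with $\mathscr M_\pi$, i.e.\ showing that the strand permutation of the associated plabic graph equals the medial matching $\pi$. This needs an explicit local construction replacing each Ising edge by a small plabic gadget, followed by a strand-by-strand comparison. The combinatorial identity $\pi_{\mathscr M^*}=\pi^{-1}$ in Step 1 is standard but requires care with the decorations.
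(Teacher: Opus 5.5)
The paper does not prove this statement; it is quoted from Galashin--Pylyavskyy, so there is no internal argument to compare against. Your two-step argument is a valid derivation from results the paper does state, after one correction.

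Step~1 is sound, and it only uses that the support of $V$ is closed under $I\mapsto[2n]\setminus I$. The duality fact you flag as delicate has a short proof in the paper's own language. Let $\Gamma$ be a reduced plabic graph for $\mathscr M$. The color-changed graph $\overline{\Gamma}$ has the same dimer covers, with $\partial_{\overline{\Gamma}}M=[2n]\setminus\partial_\Gamma M$. Its medial graph is that of $\Gamma$ with every orientation reversed (compare (c) and (d) in \cref{fig:medial_graph_plabic}). Hence $\mathscr M_{\overline{\Gamma}}=\mathscr M^*$ and $\pi_{\overline{\Gamma}}=\pi_\Gamma^{-1}$, with loops and coloops exchanged. \cref{thm:post_bijections} then gives your conclusion without any necklace bookkeeping.

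The correction concerns Step~2. To realize every $\pi\in P_n$ you need reduced \emph{cactus} graphs (the classification in \cref{sec:3}), not graphs in the disk. For instance, for $n=2$ one has $\Delta_{14}=\Delta_{23}=1-\langle\sigma_1\sigma_2\rangle^2$ (\cref{eg:ising_n_2}). This is positive for any Ising model on a graph in the disk. So the cell of the matching $\{1,4\},\{2,3\}$, where these coordinates vanish, is reached only by gluing $b_1^\partial$ and $b_2^\partial$. With $G$ a cactus graph, \cref{thm:galashin_pylyavskyy} gives nonemptiness immediately. Total nonnegativity needs no separate argument, since $\Pi^{>0}_{\mathscr M_\pi}\subset\Gr^{\geq 0}_{n,2n}$ by definition.

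You have also misplaced the difficulty. The equality $\pi_{G^\square}=\pi_G$ is a local check on the gadget of \cref{fig:medial_graph_to_G_square}. The substantive input you use as a black box is the identity $\phi_n\circ\operatorname{Corr}_G=\operatorname{Meas}_{G^\square}\circ\iota$ of \cref{thm:gal_pyl_main}, where $\iota\colon\mathcal I_G^{>0}\hookrightarrow\mathcal X_{G^\square}^{>0}$ is the embedding.

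For nonemptiness alone you can avoid correlations and $\phi_n$ entirely, as the paper notes after \cref{prop:George_characterization}:
\begin{itemize}
\item the measurement of $(G^\square,[\wt^\square])$ lies in $\Pi^{>0}_{\mathscr M_\pi}$ by \cref{thm:postnikov_boundary_meas};
\item color-change replaces $Z_I$ by $Z_{[2n]\setminus I}$;
\item spider moves do not change the measurement.
\end{itemize}
So \cref{prop:George_characterization} gives $Z_{[2n]\setminus I}=\mu Z_I$ for all $I$ and some $\mu>0$. Applying this twice forces $\mu=1$, i.e.\ the point lies in $\OGr^{\geq 0}_{n,2n}$.
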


\subsection{Cactus graphs}

Let \( \disk \) be a disk with boundary vertices labeled \( b_1^\partial, \dots, b_n^\partial \) in clockwise cyclic order. Let $P$ be a noncrossing partition of $[n]$. A \emph{cactus with shape $P$} is the topological space $\disk/P$ obtained by gluing together the points $b_1^\partial, \dots, b_n^\partial$ whose indices are in the same part of $P$. A \emph{cactus graph with shape $P$} is a graph $G$ embedded in a cactus $\disk/P$ with boundary vertices \( b_1^\partial, \dots, b_n^\partial \).

\begin{figure}[ht]
    \centering
    \scalebox{0.9}{
    \begin{tikzpicture}[scale=0.6]
        \begin{scope}[shift={(-3,0)}]
            \def\r{2};
            \fill[black!5] (0,0) circle (1.05*\r cm);
            \coordinate[nvert] (n1) at (2,0);
            \coordinate[nvert, label = left: $b_i^\partial$] (n2) at (-2,0);
            \draw[] (n1) -- (n2);
        \end{scope}
        
        \node at (0.5,0) {$\rightarrow$};	
        
         \begin{scope}[shift={(4,0)}]
            \def\r{2};
            \fill[black!5] (0,0) circle (1.05*\r cm);
            \coordinate[nvert, label = left: $u_{i+1}^\partial$] (n1) at (135:\r);
            \coordinate[nvert, label = left: $u_i^\partial$] (n2) at (225:\r);
            \node[nvert] at (0,0) {};
            \draw[] (n1) -- (-45:\r)
                    (n2) -- (45:\r);
        \end{scope}

        \begin{scope}[shift={(10,0)}]
            \def\r{2};
            \fill[black!5] (0,0) circle (1.05*\r cm);
            \coordinate[nvert] (n1) at (2,0);
            \coordinate[nvert] (n2) at (-2,0);
            \draw[] (n1) -- (n2);
        \end{scope}
        
        \node at (13.5,0) {$\rightarrow$};	
        
        \begin{scope}[shift={(17,0)}]
          \def\r{2};
            \fill[black!5] (0,0) circle (1.05*\r cm);
            \coordinate[] (n1) at (135:\r);
            \coordinate[] (n2) at (225:\r);
            \node[nvert] at (0,0) {};
            \draw[] (n1) -- (-45:\r)
                    (n2) -- (45:\r);
        \end{scope}

        \draw[] (7,-2) -- (7,2);
        \node at (0.5,-3) {(a)};	
        \node at (13.5,-3) {(b)};	
    \end{tikzpicture}}
    \caption{The construction of the medial graph \( G^\times \) from \( G \) at (a) boundary edges and (b) internal edges.}
    \label{fig:medial_graph}
\end{figure}

 Let $G$ be a cactus graph with shape $P$. The \emph{medial graph} $G^\times$ of \( G \) is the graph in $\disk$ obtained from \( G \) as follows. Place boundary vertices \( u_1^\partial,\dots,u_{2n}^\partial \) of \( G^\times \) around the boundary of the cactus $\disk/P$ so that \( b_i^\partial \) is between \( u_{2i-1}^\partial \) and \( u_{2i}^\partial \). Replace each edge of $G$ as in \cref{fig:medial_graph}. Finally, take the preimage of the resulting graph under the quotient map $\disk \rightarrow \disk/P$. By construction, each internal vertex of \(G^\times\) has degree \(4\). A \emph{strand} of \( G \) is a maximal (unoriented) path in $G^\times$ that contains the opposite pair of edges at each internal vertex. The \emph{medial pairing} of \( G \) is the fixed-point-free involution \( \pi_G : [2n] \rightarrow [2n] \) given by 
\[
\pi_G(i) = j \quad \text{if there is a strand between \( u_i^\partial\) and \(u_j^\partial\)}.
\]
If \( \pi_G(i) = j \), then we write \( j = \bar i\). We denote the strand between $i$ and $\bar i$ by $\gamma_{\{i,\bar i\}}.$

We say that a cactus graph $G$ is \emph{reduced} if:
\begin{enumerate}
    \item No strand is a closed loop.
    \item No pair of strands intersects twice.
\end{enumerate}

    The \emph{Y-$\Delta$ move} is a local modification of cactus graphs shown in \cref{figyd}. We say that two cactus graphs $G$ and $G'$ are \emph{move-equivalent} if they are connected by a sequence of Y-$\Delta$ moves. Similarly to \cref{thm:post_bijections}, we have:
    \begin{proposition}
    The map
    \[
     \Big\{\text{reduced cactus graphs}\Big\}/\text{move-equivalence} \xrightarrow[] {G \longmapsto \pi_G} \Big\{\text{fixed-point-free involutions of $[2n]$}\Big\}
    \]
    is well-defined and a bijection.
    \end{proposition}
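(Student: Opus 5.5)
Well-definedness comes first: $\pi_G$ must be a fixed-point-free involution that does not change under a Y-$\Delta$ move. By construction each strand of $G^\times$ is unoriented. Every internal vertex of $G^\times$ has degree $4$, and each strand passes straight through it, so a strand that is not a closed loop starts at one boundary vertex $u_i^\partial$ and ends at a different one. Hence $\pi_G$ is an involution. It has no fixed points because each $u_i^\partial$ has degree one in $G^\times$, so the strand leaving $u_i^\partial$ cannot come back to it. For invariance under moves, I would look at \cref{figyd}: the medial graph of a Y and of a $\Delta$ are the two sides of a Reidemeister-III-type triangle move on three strands. Such a move keeps the boundary endpoints of every strand, and it also preserves reducedness, since no new double crossings or loops appear.

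For bijectivity I would transfer Postnikov's theorem (\cref{thm:post_bijections}) instead of reproving the electrical-network argument of Lam. The construction goes as follows. From a cactus graph $G$, form a plabic graph $\Gamma_G$: put a black and a white vertex at each crossing of $G^\times$, joined by an edge in the standard way, so that the unoriented strands of $G$ turn into oriented strands of $\Gamma_G$. Every medial strand between $u_i^\partial$ and $u_{\bar i}^\partial$ then gives two plabic strands, $i\to\bar i$ and $\bar i\to i$. So $\pi_{\Gamma_G}$ is $\pi_G$ seen as a permutation, and the construction already used for $\mathscr M_\pi$ supports this. I would then check two things. First, $G$ reduced implies $\Gamma_G$ reduced: a parallel bigon or self-intersection in $\Gamma_G$ would come from two medial strands crossing twice, or from a medial strand crossing itself, respectively. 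Second, a Y-$\Delta$ move on $G$ corresponds to a fixed sequence of spider and contraction moves on $\Gamma_G$.

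Surjectivity: given a fixed-point-free involution $\pi$, draw $n$ chords in the disk, one from $i$ to $\pi(i)$ for each pair, as straight segments in general position. Any two chords then meet at most once, and none meets itself. Checkerboard-color the regions. Shaded regions become vertices and crossings become edges; pinching shaded regions that touch the boundary at several points produces the cactus shape $P$. The result is a reduced cactus graph with medial pairing $\pi$.

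Injectivity is where I expect the main difficulty. Suppose $\pi_G=\pi_{G'}$. The classical approach is the Thurston / de Silva type statement: two reduced pseudoline (strand) arrangements with the same endpoint pairing are related by triangle moves. If that is citable in the unoriented setting, we are done once triangle moves are identified with Y-$\Delta$ moves. Otherwise I would prove it by induction on the number of crossings: choose a strand bounding a minimal "ear" next to the boundary, and use the standard argument that a triangle can always be pushed out of the region cut off by that strand. The cactus degeneracies, where parts of $P$ touch the boundary, must be tracked through this argument, and that bookkeeping is the delicate point.
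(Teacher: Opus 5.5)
The paper gives no proof of this proposition. It quotes it as the cactus analog of Postnikov's classification, i.e.\ the classical theorem of Curtis--Ingerman--Morrow \cite{CIM98} (and Colin de Verdi\`ere--Gitler--Vertigan), in the form used by \cite{Lam} and \cite{GalashinPylyavskyy}. What you actually carry out is that classical argument, essentially the electrical-network route you said you would bypass: invariance under the triangle move, surjectivity from a chord diagram, and injectivity from connectivity of reduced strand arrangements under triangle moves. Your argument is sound.

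The announced transfer of Postnikov's theorem does no work, and it cannot. Postnikov only provides \emph{some} sequence of plabic moves from $\Gamma_G$ to $\Gamma_{G'}$. Nothing forces its intermediate graphs to be of the form $\Gamma_H$, or the sequence to split into the blocks that realize Y-$\Delta$ moves; the Kenyon--Pemantle correspondence only goes from Y-$\Delta$ moves to plabic moves. Likewise, Postnikov's surjectivity gives a plabic graph with permutation $\pi$ that need not come from any cactus graph. Your later paragraphs never use this transfer, so drop the reducedness and move checks for $\Gamma_G$.

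The cactus bookkeeping you flag as delicate is automatic. A cactus graph is recovered from its medial arrangement in $\disk$: shaded regions are vertices, crossings are edges, and a shaded region meeting several arcs $(u_{2i-1}^\partial,u_{2i}^\partial)$ is exactly a block of $P$. This recovery needs two conventions that the statement tacitly assumes:
\begin{itemize}
\item No strand self-intersects. You rightly include this, although the paper's definition of reduced omits it: a pendant edge at $b_1^\partial$ gives a self-crossing strand with the same pairing as the edgeless graph.
\item There are no isolated interior vertices, which $G^\times$ cannot see.
\end{itemize}
A triangle move is supported in a small interior disk and neither merges nor splits regions meeting $\partial\disk$, so $P$ is unchanged. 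For a triangle bounded by strands $a,b,c$, the region opposite the corner $a\cap b$ lies on the triangle's side of $c$, while the regions opposite the other two corners do not. Hence the three outer vertices of the Y are distinct, and similarly the three vertices of the $\Delta$. So triangle moves on $G^\times$ are exactly Y-$\Delta$ moves on $G$, and injectivity is literally the strand-arrangement theorem of \cite{CIM98}.

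If you reprove that theorem yourself, minimality of the ear is what makes your induction run.
\begin{itemize}
\item Every strand in the cut-off region then goes from $s$ to the boundary arc. Label these strands $t_1,\dots,t_m$ in order along $s$.
\item If the first crossing on $t_k$ is with $t_j$ for some $j>k+1$, then $t_{k+1}$ is trapped between $s$, $t_k$ and $t_j$. So its first crossing is with some $t_l$ with $k+1<l\le j$.
\item Start from the smallest $k$ for which $t_k$ has a crossing and iterate. You reach some $t_{k'}$ whose first crossing is with $t_{k'+1}$. Then $s,t_{k'},t_{k'+1}$ bound an empty triangle, and flipping it moves a crossing out of the region.
\item When the region is crossing-free, delete $s$.
\end{itemize}
The outer induction should therefore be on the number of strands, not on the number of crossings.
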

    In other words, move-equivalence classes of reduced cactus graphs with $n$ boundary vertices are classified by fixed-point-free involutions of $[2n]$.

\subsection{Ising models}

An \emph{Ising model} is a pair $(G,J)$ where $G$ is a reduced cactus graph and $J: E(G) \rightarrow \Rpos$ is a function called \emph{coupling constant}. Let 
\[
\mathcal I_G^{>0} \cong \Rpos^{E(G)}
\]
be the space of Ising models with underlying graph $G$. 
Each Y-$\Delta$ move $G \to G'$ induces a bijection between the corresponding spaces of Ising models
\[
 \mathcal I_G^{>0} \xrightarrow[]{\sim} \mathcal I_{G'}^{>0}
\]
as illustrated in \cref{figyd}. The edge parameters shown in the figure are $\exp(2J_e)$ and the bijection is given in terms of these parameters by
\[
	A=\sqrt{ \frac{(abc+1)(a+bc)}{(b+ac)(c+ab)}},\quad
	B=\sqrt{ \frac{(abc+1)(b+ac)}{(a+bc)(c+ab)}},\quad
	C=\sqrt{ \frac{(abc+1)(c+ab)}{(a+bc)(b+ac)}}.
\]

Let $\pi$ be a fixed-point-free involution on $[2n]$. The \emph{space of Ising models associated to $\pi$} is defined as 
\[
\mathcal{I}_{\pi}^{>0}
:= 
\bigsqcup_{G: \pi_G=\pi }
\mathcal{I}_G^{>0} 
\!\Big/\! \text{Y-$\Delta$ moves}.
\]

A \emph{spin configuration} is a function 
\(
\sigma : V \rightarrow \{ -1,1\}
\)
assigning to each vertex $v \in V$ a spin $\sigma_v$. The probability of a spin configuration is defined as
\[
\mathbf{P}(\sigma):= \frac{1}{Z} \prod_{e = uv \in E} e^{J_e \sigma_u \sigma_v},
 \]
where 
\(
Z := \sum_{\sigma: V \rightarrow \{-1,1\} } \prod_{e = uv \in E} e^{J_e \sigma_u \sigma_v} 
\) is the \emph{partition function}. Given $i,j \in [n]$, we define the \emph{boundary correlation function} by 
\[
\langle \sigma_i \sigma_j \rangle : = \sum_{\sigma : V \rightarrow \{-1,1\}} \mathbf{P}(\sigma) \sigma_{b_i^\partial} \sigma_{b_j^\partial}. 
\]
By definition, we have
\(
\langle \sigma_i \sigma_j \rangle =\langle \sigma_j \sigma_i \rangle\) and \( \langle \sigma_i \sigma_i \rangle =1
\) for all $i,j \in [n]$. Thus, the $n \times n$-matrix
\[
M(G,J):=(\langle \sigma_i \sigma_j \rangle)_{ i,j \in [n]}
\]
of boundary correlation functions is a symmetric real matrix with $1$s on the diagonal. We denote the space of such matrices by $\operatorname{Mat}_n^{\text{sym}}(\R,1)$. The map 
    \[
    \operatorname{Corr}_G: \mathcal I_G^{>0} \rightarrow \operatorname{Mat}_n^{\text{sym}}(\R,1),  \quad J \mapsto  M(G,J)
    \]
    is called the \emph{boundary correlation map}.

\begin{figure}[ht]
	\begin{tikzpicture}[scale=0.6]
		\def\ep{0.3}

		\begin{scope}[shift={(-3,0)}, rotate=180]
			\def\r{2};
			\fill[black!5] (0,0) circle (1.05*\r cm);
			\coordinate[nvert] (n1) at (0,0);
			\coordinate[nvert] (n2) at (90:\r);
			\coordinate[nvert] (n3) at (90+120:\r);
			\coordinate[nvert] (n4) at (-30:\r);

			\draw[-]
			(n1) edge node[left]{$a$} (n2) edge node[above]{$c$}(n3) edge node[above]{$b$}(n4)
			;

		  \end{scope}

		\node[](no) at (0.5,0) {$\leftrightarrow$};	
		
		\begin{scope}[shift={(4,0)}, rotate=180]
			\def\r{2};
			\fill[black!5] (0,0) circle (1.05*\r cm);
			
			\coordinate[nvert] (n2) at (90:\r);
			\coordinate[nvert] (n3) at (90+120:\r);
			\coordinate[nvert] (n4) at (-30:\r);

			\draw[-]
			(n2) -- node[right]{$B$} (n3) -- node[above]{$A$}(n4) -- node[left]{$C$}(n2)
			;

		  \end{scope}
	
	  \end{tikzpicture}
	  \caption{The Ising Y-$\Delta$ move.} \label{figyd}
\end{figure}

    Let 
    \[ \phi_n: \operatorname{Mat}_n^{\text{sym}}(\R,1) \rightarrow \Gr_{n,2n}\]
    be the \emph{doubling map} defined as follows: given $M = (m_{i,j}) \in \operatorname{Mat}_n^{\text{sym}}(\R,1)$, let $\phi_n(M) = \mathrm{rowspan}(\tilde M)$ where $\tilde M = (\tilde M_{i,j})$ is the $n \times 2n$-matrix defined as follows. For $i=j$, let 
\[
\tilde M_{i,2i-1} = \tilde M_{i,2i} = m_{i,i} =1,
\]
and for $i \neq j$, let 
\[
\tilde M_{i,2j-1} = -\tilde M_{i,2j} = (-1)^{i+j+ \mathbbm{1}(i<j)} m_{i,j},
\]
where $\mathbbm{1}(i<j)$ denotes $1$ if $i<j$ and $0$ otherwise. Similarly to \cref{thm:postnikov_boundary_meas}, we have:

\begin{theorem}[Galashin--Pylyavskyy~\cite{GalashinPylyavskyy}]\label{thm:galashin_pylyavskyy}
    Let $G$ be a reduced cactus graph and let $\pi=\pi_G$ be its fixed-point-free involution. Then, the composition
\[
\mathcal I_{G}^{>0} \xrightarrow[]{\mathrm{Corr}_{G}} \operatorname{Mat}_n^{\mathrm{sym}}(\R,1) \xrightarrow[]{ \phi_n } \Gr_{n,2n}^{\geq 0}
\]
is a bijection 
\[
\mathcal I_{G}^{>0} \xrightarrow[]{\sim} \OGr^{\geq 0}_{n,2n} \cap \Pi_{\mathscr{M}_\pi}^{>0}.
\]
The boundary correlation map is compatible with Y-$\Delta$ moves, i.e., the following diagram commutes:
\[
\begin{tikzcd}
\mathcal I_G^{>0}
  \arrow[rr,"\sim"] 
  \arrow[dr, swap, "\mathrm{Corr}_G"] 
& 
& 
\mathcal I_{G'}^{>0}
  \arrow[dl, "\mathrm{Corr}_{G'}"] 
\\
& \operatorname{Mat}_n^{\mathrm{sym}}(\R,1) &
\end{tikzcd}.
\]
Therefore, the maps $(\mathrm{Corr}_G)_{G: \pi_G =\pi}$ glue together to a map
\[
\mathrm{Corr}_{\pi} : \mathcal I_{\pi}^{>0} \rightarrow \operatorname{Mat}_n^{\mathrm{sym}}(\R,1).
\]
which gives a bijection
\[
\phi_n\circ\mathrm{Corr}_{\pi}:\mathcal I_{\pi}^{>0} \xrightarrow[]{\sim} \OGr^{\geq 0}_{n,2n} \cap \Pi_{\mathscr{M}_\pi}^{>0}.
\]
\end{theorem}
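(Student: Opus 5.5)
The plan is to reduce the statement to Postnikov's boundary measurement theorem, \cref{thm:postnikov_boundary_meas}, through an explicit plabic graph $\Gamma(G)$ built from the medial graph $G^\times$. Each internal crossing of $G^\times$ (one per edge $e$ of $G$) is replaced by a small square of two black and two white vertices, with weights given by $s_e=\sech(2J_e)$ and $c_e=\tanh(2J_e)$; boundary edges are treated analogously. Then $\Gamma(G)$ is a reduced plabic graph of type $(n,2n)$ whose strands follow the strands of $G$, so $\pi_{\Gamma(G)}=\pi_G$ and $\mathscr M_{\Gamma(G)}=\mathscr M_{\pi_G}$.

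\textbf{Main identity.} The key step is to show that, up to a global scalar,
\[
\Delta_I(\phi_n(M(G,J)))=Z_I(\wt_J)\qquad\text{for every } I\in\binom{[2n]}{n}.
\]
I would prove this via the high-temperature (even subgraph) expansion of the boundary correlations. Each $\langle\sigma_i\sigma_j\rangle$ is a ratio of sums over subgraphs with prescribed odd-degree boundary vertices, weighted by products of $\tanh J_e$. These subgraphs are matched locally at each square with dimer covers of $\Gamma(G)$. The signs $(-1)^{i+j+\mathbbm 1(i<j)}$ in the doubling map are exactly what make the minors of $\tilde M$ equal to Pfaffian-type sums, which are then identified with dimer partition functions with boundary $I$.

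This identity gives three things.
\begin{itemize}
\item Nonnegativity, and the fact that the positroid is $\mathscr M_\pi$.
\item Membership in $\OGr$: complementing $I$ corresponds to swapping the roles of $u_{2i-1}^\partial$ and $u_{2i}^\partial$, which gives a weight-preserving bijection of dimer covers, so $\Delta_I=\Delta_{I^c}$.
\item Injectivity, by composing with Postnikov's bijection $\operatorname{Meas}_\Gamma$. It remains only to check that $J\mapsto[\wt_J]$ is injective into $\mathcal X_{\Gamma(G)}^{>0}$. This is clear because the face variable of each square records $c_e^2/s_e^2$, or a similar monotone function of $J_e$.
\end{itemize}

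\textbf{Surjectivity.} This is the main obstacle: showing that every point of $\OGr^{\ge0}_{n,2n}\cap\Pi^{>0}_{\mathscr M_\pi}$ arises. I would first attempt induction on the number of crossings of $\pi$, following Curtis--Ingerman--Morrow. Every reduced $G$ is, up to Y-$\Delta$ moves, obtained from a smaller reduced graph by adjoining a boundary edge or a boundary spike. On the Grassmannian side these operations act by explicit one-parameter unipotent-type transformations that preserve $\OGr$. For surjectivity I would show that a point $V$ of the larger cell can be ``peeled'': there is a unique positive parameter sending $V$ into the smaller orthogonal cell, determined by a ratio of Pl\"ucker coordinates, in analogy with Postnikov's bridge removal, with positivity of the parameter following from total nonnegativity. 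A dimension check, $|E(G)|=\dim(\OGr\cap\Pi_{\mathscr M_\pi})$, serves as a consistency test.

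\textbf{Y-$\Delta$ compatibility.} This is a local computation. Summing out the central spin of the Y gives a Boltzmann weight on the three outer spins. This weight equals a constant times the triangle weight exactly when $A,B,C$ are given by the stated formulas, which is the star--triangle relation. The constant cancels in $\mathbf P$, so all correlations agree. Gluing the maps $\mathrm{Corr}_G$ over Y-$\Delta$ moves then yields $\mathrm{Corr}_\pi$, and the bijectivity statement for $\mathcal I^{>0}_\pi$ follows from the one for a single graph $G$.
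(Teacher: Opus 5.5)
The paper does not prove this statement; it quotes it from Galashin--Pylyavskyy, whose treatment is recursive and built on adjoining boundary edges and boundary spikes. Your architecture is reasonable, and the Y-$\Delta$ and gluing part is fine. However, the step that everything else rests on is not established, and the mechanism you propose for it would not work.

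\textbf{The main identity.} Nonnegativity, the identification of the positroid as $\mathscr M_\pi$, and injectivity all hinge on $\Delta_I(\phi_n(M(G,J)))\propto Z_I(\wt_J)$. This is precisely the content of \cref{thm:gal_pyl_main}, and it cannot come from a weight-preserving local matching of high-temperature subgraphs with dimer covers. The reasons are as follows.
\begin{itemize}
\item The high-temperature weights are $t_e=\tanh J_e$, whereas the dimer weights are $c_e=2t_e/(1+t_e^2)$ and $s_e=(1-t_e^2)/(1+t_e^2)$.
\item A Pl\"ucker coordinate of $\tilde M$ is a polynomial of degree at most $n$ in the ratios $\langle\sigma_i\sigma_j\rangle=Z^{\mathrm{ht}}_{ij}/Z^{\mathrm{ht}}_{\emptyset}$.
\item Already for a single edge, $\Delta_{14}=1-\langle\sigma_1\sigma_2\rangle^2=1-t^2$ has a negative coefficient. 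It is a signed difference of products of two expansions.
\end{itemize}
So any combinatorial correspondence would have to be a signed, two-copy one. Saying ``Pfaffian-type sums'' names this difficulty rather than resolving it.

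What does work is the induction you reserve for surjectivity, run for the identity itself.
\begin{itemize}
\item Adjoining a spike at $b_j$ multiplies every correlation involving $j$ by $\tanh J$. A direct check shows this replaces $\phi_n(M)$ by $\phi_n(M)\,g$, where $g$ is $\tfrac1c\left(\begin{smallmatrix}1&s\\ s&1\end{smallmatrix}\right)$ on columns $2j-1,2j$.
\item Adjoining a boundary edge between $b_j$ and $b_{j+1}$ acts by $\tfrac1s\left(\begin{smallmatrix}1&c\\ c&1\end{smallmatrix}\right)$ on columns $2j,2j+1$. Verifying this needs the Pfaffian formula for boundary four-point functions.
\item Attaching the corresponding boundary square of $G^\square$ (two bridges of opposite colors) acts on $\operatorname{Meas}$ by the same $g$.
\end{itemize}
Combine these with Y-$\Delta$ invariance on both sides and the edgeless base case.

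\textbf{Two further points.} First, your argument for $\Delta_I=\Delta_{I^c}$ is wrong. Swapping $2i-1\leftrightarrow 2i$ for all $i$ sends $I$ to $I^c$ only when $I$ contains exactly one element of each pair. For example, if $I\supseteq\{1,2\}$, its image still contains $1,2$, while $I^c$ contains neither. The correct dimer-side mechanism is color-change followed by spider moves at all square faces (\cref{prop:George_characterization}). The step is free anyway. For every $M\in\Mat_n^{\mathrm{sym}}(\R,1)$, the rows of $\tilde M$ are $q$-isotropic and pairwise $q$-orthogonal, because the two cross terms $\pm 2m_{ij}$ cancel thanks to the $\mathbbm{1}(i<j)$ in the sign. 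Nonnegativity then forces the $+$ sign in $\Delta_I=\pm\Delta_{I^c}$.

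Second, in the surjectivity step the real issue is not positivity of the parameter. It is showing that, for the right $c$, $Vg(c)^{-1}$ is again totally nonnegative with positroid $\mathscr M_{\pi'}$. This is not automatic, since $g(c)^{-1}$ has negative entries. One natural route is to peel off the two opposite-type bridges that make up $g$ using Postnikov's lemma. That produces $V=V_2P$ with $P$ a totally positive $2\times2$ block, but $P$ agrees with $g(c)$ only up to diagonal factors on both sides. You then need $\Delta_I(V)=\Delta_{I^c}(V)$ to show that the right-hand factor is a scalar. As written, this key lemma is asserted rather than proved.
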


 \begin{figure}[ht]
    \centering
    \begin{tikzpicture}[scale=0.6]
        \begin{scope}[shift={(-4,0)}]
      \def\r{2};
            \fill[black!5] (0,0) circle (1*\r cm);
            \draw[dashed, gray] (0,0) circle (1*\r cm);
            \coordinate[nvert, label = right: $b_2^\partial$] (n1) at (0:\r);
            \coordinate[nvert, label = left: $b_1^\partial$] (n2) at (180:\r);
            
            \coordinate (no) at (90:2);
            \coordinate (no) at (-90:2);
            
            \draw[] (n1) -- node[above]{$J$} (n2);

        \end{scope}

    \end{tikzpicture}
    \caption{An Ising model with $n=2$.}
    \label{fig:Ising_small_example}
\end{figure}

\begin{example}\label{eg:ising_n_2}
Consider the Ising model shown in \cref{fig:Ising_small_example}. Then
\[
 M = \begin{bmatrix}
    1&\langle \sigma_1 \sigma_2 \rangle\\
    \langle \sigma_1 \sigma_2 \rangle & 1
\end{bmatrix}, \qquad \text{where}~\langle \sigma_1 \sigma_2 \rangle = \frac{e^{J}-e^{-J}}{e^J+e^{-J}}=\tanh J.
\]
The point in $\OGr^{\geq 0}_{2,4}$ is the rowspan of
\[
\tilde M=\phi_2(M) = \begin{bmatrix}
    1&1&\langle \sigma_1 \sigma_2 \rangle&-\langle \sigma_1 \sigma_2 \rangle\\
    -\langle \sigma_1 \sigma_2 \rangle&\langle \sigma_1 \sigma_2 \rangle & 1 &1
\end{bmatrix}
\]
with Pl\"ucker coordinates
\begin{equation}\label{eq:plucker_ising_eg}
\Delta_{12} = \Delta_{34} = 2 \langle \sigma_1 \sigma_2 \rangle, \quad \Delta_{14}=\Delta_{23} = 1-\langle \sigma_1 \sigma_2 \rangle^2, \quad \Delta_{13} = \Delta_{24} =1+ \langle \sigma_1 \sigma_2 \rangle^2.
\end{equation}
\end{example}

\subsection{Embedding Ising models in dimer models}

\begin{figure}[ht]
    \centering
    \begin{tikzpicture}[scale=0.6]

        \begin{scope}[shift={(10,0)}, decoration={
    markings,
    mark=at position 0.5 with {\arrow{>}}}]
           \def\r{2};
            \fill[black!5] (0,0) circle (1.05*\r cm);
            \coordinate[] (n1) at (135:\r);
            \coordinate[] (n2) at (225:\r);
            \node[nvert] at (0,0) {};
              \draw[dashed] (-2,0) -- (2,0);
            \node at (2.5,0) {$e$};
            \draw[postaction={decorate}] (0,0) -- (n1);
            \draw[postaction={decorate}]  (0,0) -- (-45:\r);
            \draw[postaction={decorate}] (n2) -- (0,0);
            \draw[postaction={decorate}]  (45:\r) -- (0,0);
            
        \end{scope}
        \node at (13.5,0) {$\rightarrow$};	
         \begin{scope}[shift={(17,0)}]
          \def\r{2};
            \fill[black!5] (0,0) circle (1.05*\r cm);
            \coordinate[wvert] (w1) at (225:0.5*\r);
            \coordinate[wvert] (w2) at (45:0.5*\r);
            \coordinate[bvert] (b1) at (135:0.5*\r);
            \coordinate[bvert] (b2) at (315:0.5*\r);

              \draw[] (w1) -- node[left]{$s_e$} (b1) --node[above]{$c_e$} (w2) --node[right]{$s_e$} (b2) --node[below]{$c_e$} (w1)
            (w1) -- (225:\r)
            (w2) -- (45:\r)
            (b1) -- (135:\r)
            (b2) -- (315:\r)
            ;
        \end{scope}

    \end{tikzpicture}
    \caption{The construction of the dimer model $(G^\square,[\wt^\square])$ from $G^\times$ and $J$. The edges of $G^\square$ with no edge weight indicated have edge weight $1$.     
    }
    \label{fig:medial_graph_to_G_square}
\end{figure}

Let $G$ be a reduced cactus graph. We orient the edges of the medial graph \( G^\times \) so that:
\begin{enumerate}
    \item The boundary vertex \( u_i^\partial \) is a source if and only if \( i \) is odd.
    \item Each internal vertex of \( G^\times \) is incident to two incoming and two outgoing edges which alternate in orientation around the vertex.
\end{enumerate}
The existence of this ordering shows that \( \pi_G \) pairs even numbers with odd numbers and vice versa. We construct a reduced plabic graph \( G^\square \) by replacing each vertex of \( G^\times \) as shown on the left hand side of \cref{fig:medial_graph_to_G_square} with the bipartite graph on the right hand side. We refer to the degree-\(4\) faces of \( G^\square \) corresponding to edges of \( G \) as \emph{square faces}. The remaining faces of \( G^\square \) are in bijection with the vertices and faces of \( G \). Note that there is a \( 2:1 \) correspondence between strands in \(G^\square\) and strands in \(G\) as follows. The strand in \(G\) with source \(u_i^\partial\) corresponds to the pair of strands in \( G^\square\) with sources \( u_i^\partial,u_{\bar{i}}^\partial \). Moreover, we have 
\( \pi_{G^\square}=\pi_G \).

Given an Ising model \( (G,J) \), we define the two functions $s, c : E(G) \rightarrow (0,1)$ by 
\[
s_e := \operatorname{sech}(2J_e),\qquad c_e:= \operatorname{tanh}(2J_e) 
\]
for every edge $e \in E(G)$ and replace $e$ with the dimer model $(G^\square,[\wt^\square])$ shown in \cref{fig:medial_graph_to_G_square}. The map
\[
\mathcal I_{G}^{>0} \hookrightarrow \mathcal X_{G^\square}^{>0}, \quad (G,J) \mapsto (G^\square,[\wt^\square]).
\]
is injective (see the proof of Theorem~5.17 in \cite{GalashinPylyavskyy}).

\begin{proposition}[Kenyon--Pemantle~\cite{KenyonPemantle1}]
Let $G \to G'$ be a Y-$\Delta$ move. Then there is a sequence of moves $G^\square \to (G')^\square$ such that the following diagram commutes:
\[
\begin{tikzcd}
\mathcal I_{G}^{>0} 
  \arrow[r, "\sim"] \arrow[d, hook] 
& 
\mathcal I_{G'}^{>0} 
  \arrow[d, hook] 
\\
\mathcal X_{G^\square}^{>0}
    \arrow[r,   "\sim"]
& 
\mathcal X_{(G')^\square}^{>0}
  \end{tikzcd}.
\]
In particular, the embeddings \(
\mathcal I_{G}^{>0} \hookrightarrow \mathcal X_{G^\square}^{>0}
\) glue together to give an embedding 
\[
\mathcal I_{\pi}^{>0}\hookrightarrow \mathcal X_{\mathscr{M}_\pi}^{>0}.
\]
\end{proposition}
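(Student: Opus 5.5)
The plan is to reduce the statement to a local computation on a disk with three boundary spins, and then to use the injectivity results already recalled (\cref{thm:post_bijections}, \cref{thm:postnikov_boundary_meas} and \cref{thm:galashin_pylyavskyy}). A Y-$\Delta$ move replaces a neighbourhood $D\subset\disk$ of the Y (or of the triangle) by another cactus graph in the same disk $D$. Here $D$ has $3$ boundary spins and $6$ medial boundary points.

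\textbf{Existence of the sequence of moves.} Let $G_D$ and $G'_D$ be the local pieces of $G$ and $G'$ in $D$.
\begin{enumerate}
\item In the medial graph, the Y-$\Delta$ move is the triangle flip of the three strands crossing $D$. It therefore preserves the medial pairing of the local piece, and both $G_D$ and $G'_D$ are reduced.
\item By the $2:1$ correspondence between strands of $G^\square$ and strands of $G$, together with $\pi_{G^\square}=\pi_G$ (applied in the disk $D$), the local plabic graphs $G_D^\square$ and $(G'_D)^\square$ are reduced and have the same decorated permutation.
\item By \cref{thm:post_bijections}, they are connected by a sequence of spider and contraction--uncontraction moves.
\item These moves are supported inside $D$. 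Hence they extend to a sequence of moves $G^\square\to(G')^\square$ that does not touch anything outside $D$.
\end{enumerate}

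\textbf{Commutativity, reduced to $D$.} The transformations of \cref{fig:moves:X} only change $X$-variables of faces meeting $D$. Faces meeting $D$ but not contained in it receive only multiplicative factors that are themselves determined by the local dimer model. So it suffices to show the following: applying the chosen moves to $(G_D^\square,[\wt^\square(J)])$ yields $((G'_D)^\square,[\wt^\square(J')])$, where $J'$ is given by the Y-$\Delta$ formulas. This should be stated for dimer models in $D$, with the boundary edges of weight $1$ kept fixed, so that the gauge is honestly local.

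By \cref{thm:postnikov_boundary_meas}, applied in $D$, the map $\operatorname{Meas}$ is injective and commutes with moves. It therefore suffices to show
\[
\operatorname{Meas}_{G_D^\square}([\wt^\square(J)])=\operatorname{Meas}_{(G'_D)^\square}([\wt^\square(J')])
\]
in $\Gr_{3,6}^{\ge0}$.

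\textbf{Proving the local identity.} For this I would use the identification underlying \cref{thm:galashin_pylyavskyy} (the proof of Theorem~5.17 in \cite{GalashinPylyavskyy}). There, the boundary measurement of $(G^\square,\wt^\square)$ equals $\phi_n(M(G,J))$, possibly after a fixed column rescaling depending only on $\pi$. The identity then reduces to the equality of $3\times3$ boundary correlation matrices
\[
M(G_D,J)=M(G'_D,J').
\]
This is the classical star--triangle invariance. I would check it directly by summing over the central spin of the Y. Writing $x=e^{2J}$, one compares, for each boundary configuration $(\sigma_1,\sigma_2,\sigma_3)$, the weight $\sum_{\sigma_0}\prod e^{J_i\sigma_0\sigma_i}$ with $\mathrm{const}\cdot\prod e^{J'_{ij}\sigma_i\sigma_j}$. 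Up to symmetry there are two such boundary configurations. The resulting ratios give exactly the stated formulas for $A$, $B$, $C$, with the constant cancelling in the normalized correlations.

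\textbf{Expected obstacle.} The main difficulty is the bookkeeping in the reduction. One must make sure that the local identification of $\operatorname{Meas}$ with $\phi_3\circ\operatorname{Corr}$ holds with the same normalization on both sides. This means the same column rescaling, the same orientation convention for the medial boundary points, and the same boundary edges of weight $1$. Otherwise one only gets equality up to a torus element in $\Rpos^{6}$. If this becomes murky, my fallback is brute force. I would choose an explicit sequence of moves, for instance a contraction followed by three spider moves on the square faces and an uncontraction. I would then push the $X$-variables $s_e^2/c_e^2$-type ratios through \cref{fig:moves:X} and verify, using $s^2+c^2=1$ and the Y-$\Delta$ formulas, that the output equals the $X$-variables of $\wt^\square(J')$.
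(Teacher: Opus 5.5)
Your argument is correct, but it takes a different route from the one the paper relies on. The paper gives no proof of its own and cites Kenyon--Pemantle. There the correspondence comes from an explicit local computation: a concrete sequence of spider (urban renewal) moves realizing the Y-$\Delta$ move on $G^\square$, with the $s,c$ weights pushed through it by hand using $s_e^2+c_e^2=1$ and the star--triangle formulas. That is essentially your fallback.

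Your argument is instead soft. Existence of the moves comes from Postnikov's move-connectivity applied in $D$. Commutativity comes from three ingredients: injectivity of $\operatorname{Meas}$ on the local disk, the Galashin--Pylyavskyy identity $\operatorname{Meas}_{G^\square}(\wt^\square(J))=\phi_n(M(G,J))$, and star--triangle invariance of the $3\times 3$ correlation matrix. This needs no computation. Two remarks make it airtight.

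\emph{Circularity.} You must use the Galashin--Pylyavskyy identity graph by graph, not \cref{thm:gal_pyl_main} as stated. That diagram lives on $\mathcal I_\pi$, whose embedding into $\mathcal X_{\mathscr M_\pi}$ is exactly the ``in particular'' part of the present proposition. You only need the identity for the star and the triangle with three boundary spins. For these it is a finite check, or follows from the boundary edge/spike recursion, so no circularity can enter.

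\emph{Normalization.} Your worry here is moot. The identity is an equality of points of $\Gr_{3,6}$ with all non-square edges of weight $1$. The two local pieces share boundary labels and the canonical (checkerboard) medial orientation. So even a torus factor depending on these data would cancel.

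The explicit route gives more than the soft one. It produces concrete moves, and the same local computation also yields the $\mathcal A$-side statement \cref{prop:C_to_A_yd}, i.e.\ the Kashaev equation. Your argument does not address that statement: there one must still show that the transported $A$-network satisfies the square-face conditions for $G'$, and compute $C_{111}$.

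A side remark on your fallback: the specific sequence you suggest, with only three spider moves, cannot work. All four internal face labels differ between $G_D^\square$ and $(G'_D)^\square$: the three squares and the central face. Each spider move changes exactly one label, so at least four are needed, including one at the central face.
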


\begin{theorem}[Galashin--Pylyavskyy~\cite{GalashinPylyavskyy}]\label{thm:gal_pyl_main}
 The following diagram commutes:
\[
\begin{tikzcd}
\mathcal I_{\pi}^{>0} 
  \arrow[dd, "\mathrm{Corr}_\pi"']
  \arrow[r, hook]
&
\mathcal X_{\mathscr{M}_\pi}^{>0}
  \arrow[d, "\operatorname{Meas}_{\mathscr{M}_\pi}"']
\\
&
\Pi_{\mathscr{M}_\pi}^{>0}
  \arrow[d, hookleftarrow, ""']
\\
\operatorname{Mat}_n^{\mathrm{sym}}(\R,1)
  \arrow[r, "\phi_n"']
&
\OGr^{\ge 0}_{n,2n} \cap \Pi_{\mathscr{M}_\pi}^{>0}
\end{tikzcd}
\]
\end{theorem}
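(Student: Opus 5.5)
The statement immediately above is \cref{thm:gal_pyl_main}. To prove it, I will check commutativity separately for each reduced cactus graph $G$ with $\pi_G=\pi$. For such $G$ I need to show, for every $J\in\mathcal I_G^{>0}$, that
\[
\operatorname{Meas}_{G^\square}([\wt^\square])=\phi_n(M(G,J))
\]
as points of $\Gr_{n,2n}$. The gluing is then automatic. On the Ising side, $\mathrm{Corr}$ is compatible with Y-$\Delta$ moves by \cref{thm:galashin_pylyavskyy}. On the dimer side, $\operatorname{Meas}$ is compatible with moves by \cref{thm:postnikov_boundary_meas}. The embeddings $\mathcal I_G^{>0}\hookrightarrow\mathcal X_{G^\square}^{>0}$ are compatible with Y-$\Delta$ moves by the Kenyon--Pemantle proposition.

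For fixed $G$, the plan is to compare Plücker coordinates directly. The coordinate $\Delta_I(\phi_n(M))$ is an $n\times n$ minor of the doubled matrix $\tilde M$. Since columns $2i-1$ and $2i$ of $\tilde M$ are $e_i$-like in their diagonal entries and differ only by signs off the diagonal, expanding such a minor expresses $\Delta_I$ as a signed combination of products of boundary correlations. By the standard high-temperature/FK-type expansion, these combinations become even-subgraph or boundary-pairing sums over $G$.

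On the dimer side, $Z_I(\wt^\square)$ is a sum over almost perfect matchings of $G^\square$. Locally at each square face (an edge $e$ of $G$), the restriction of a matching is one of a few local states, weighted by $1$, $s_e$, $c_e$, $s_e^2$, $c_e^2$, or $s_ec_e$. Following Dubédat's bozonization idea, I will group these states into configurations on $G$: $c_e=\tanh 2J_e$ plays the role of the edge being "occupied" in a low/high-temperature expansion, and $s_e$ plays the role of the transverse state. The boundary condition $I$ then prescribes which boundary strands are paired. After multiplying by the common factor $Z\prod_e\cosh(2J_e)$ coming from the normalization, both $Z_I$ and $\Delta_I$ should equal the same generating function of boundary pairings.

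Rather than grind through this for all $I$, I would reduce by induction using the Galashin--Pylyavskyy operations of adjoining a boundary edge and a boundary spike. Every reduced cactus graph is built from smaller ones by these operations, up to Y-$\Delta$ moves, and Y-$\Delta$ invariance on both sides is already known. For the base case I check the empty graph directly, and I also check the single-edge case $n=2$ against \eqref{eq:plucker_ising_eg}. For the inductive step, there are two things to verify:
\begin{enumerate}
\item Adjoining an edge with parameter $J$ between $b_i^\partial$ and $b_{i+1}^\partial$ acts on $M$ by an explicit transformation, which under $\phi_n$ becomes right multiplication of $\tilde M$ by a $2n\times 2n$ matrix supported on columns $2i,\dots,2i+1$ and built from $s,c$.
\item On $G^\square$, the same operation glues a square with weights $s,c$ at the boundary, which is known to act on $\operatorname{Meas}$ by right multiplication by the corresponding elementary (bridge) matrices.
\end{enumerate}
The same comparison applies to spikes.

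The main obstacle is matching signs and normalizations in this local computation. The doubling map carries the sign $(-1)^{i+j+\mathbbm 1(i<j)}$, and I have to confirm that the product of bridge matrices for the square (one black and one white bridge, with weights $s,c$) agrees exactly, up to global scalar, with the transformation of $\tilde M$ induced by adjoining an edge. I would first verify this by a direct $2\times 2$ block computation at $n=2$ and then argue that it is local in the columns.
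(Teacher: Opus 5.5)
The paper does not prove this statement; it quotes it from Galashin--Pylyavskyy. Your overall scheme is a reasonable route, in the spirit of Galashin--Pylyavskyy's cosh--sinh description of adding an edge: induction on $|E(G)|$ by adjoining boundary spikes and edges, transport along Y-$\Delta$ moves using the compatibilities already known, and bridges on the plabic side. The gap is item (1), the Ising half of your inductive step. It carries essentially all of the content, it is misformulated, and it cannot be established the way you propose.

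For a boundary edge the transformation is not local. With $t=\tanh J$,
\[
\langle\sigma_a\sigma_b\rangle'=\frac{\langle\sigma_a\sigma_b\rangle+t\,\langle\sigma_a\sigma_b\sigma_i\sigma_{i+1}\rangle}{1+t\,\langle\sigma_i\sigma_{i+1}\rangle}.
\]
So entries of $\tilde M$ in columns other than $2i,2i+1$ change, and not by a common factor. Hence $\tilde M'$ is never a scalar multiple of $\tilde M g$ with $g$ supported on columns $2i,2i+1$. The true statement is only an equality of row spans: $\tilde M'=h\,\tilde M\,g$, where $g$ is the $\cosh 2J$/$\sinh 2J$ block on columns $2i,2i+1$ and $h\in\mathrm{GL}_n$ is a nontrivial row operation mixing all rows.

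The entries of $\tilde M'$ involve four-point boundary correlations, while those of $\tilde M g$ involve only two-point ones. So this equality can hold only because of a global planar identity: the Groeneveld--Boel--Kasteleyn Pfaffian formula
\(
\langle\sigma_a\sigma_b\sigma_c\sigma_d\rangle=\langle\sigma_a\sigma_b\rangle\langle\sigma_c\sigma_d\rangle-\langle\sigma_a\sigma_c\rangle\langle\sigma_b\sigma_d\rangle+\langle\sigma_a\sigma_d\rangle\langle\sigma_b\sigma_c\rangle
\)
for boundary spins in cyclic order. A $2\times 2$ block computation at $n=2$ sees none of this, since genuine four-point terms first appear at $n=4$. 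Thus ``check $n=2$, then argue locality in the columns'' does not close the step. You must either cite Galashin--Pylyavskyy's cosh--sinh theorem for adjoining a boundary edge or derive it from the Pfaffian formula.

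Only the spike case is genuinely local, since $\langle\sigma_{i'}\sigma_a\rangle'=\tanh J\,\langle\sigma_i\sigma_a\rangle$ and all other correlations are unchanged. Even there you need to rescale row $i$ of $\tilde M$, not multiply by a global scalar.

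A smaller point concerns the base case. Adjoining raises the number of edges by one, so the base cases must be all edgeless cactus graphs, one for each noncrossing partition of $[n]$. The empty graph on the unglued disk is not enough. For example, gluing $b_1^\partial=b_2^\partial$ and adding an edge $b_3^\partial b_4^\partial$ gives a reduced graph that cannot be reached from the unglued empty graph.
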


\begin{figure}[ht]
    \centering
    \scalebox{0.8}{
    \begin{tikzpicture}
        \begin{scope}[shift={(-1,0)}]
            \def\r{2};
            \fill[black!5] (0,0) circle (1*\r cm);
            \draw[dashed, gray] (0,0) circle (1*\r cm);
            \coordinate[nvert, label=right: $b_2^\partial$] (n1) at (0:\r);
            \coordinate[nvert, label=left: $b_1^\partial$] (n2) at (180:\r);

            \draw[] (n1) -- node[above]{$J$} (n2);
            \node at (0,-2.5) {(a)};
        \end{scope}

        \node at (3,0) {$\longmapsto$};

        \begin{scope}[shift={(7,0)}]
            \def\r{2};
            \fill[black!5] (0,0) circle (1*\r cm);
            \draw[dashed, gray] (0,0) circle (\r cm);
            \coordinate[wvert] (w1) at (225:0.5*\r);
            \coordinate[wvert] (w2) at (45:0.5*\r);
            \coordinate[bvert] (b1) at (135:0.5*\r);
            \coordinate[bvert] (b2) at (315:0.5*\r);

            \coordinate[label=below left: $u_1^\partial$] (b1') at (225:\r);
            \coordinate[label=above right: $u_3^\partial$] (b2') at (45:\r);
            \coordinate[label=above left: $u_2^\partial$] (w1') at (135:\r);
            \coordinate[label=below right: $u_4^\partial$] (w2') at (315:\r);

            \draw[]
            (w1) -- node[left]{$s$} (b1)
                 -- node[above]{$c$} (w2)
                 -- node[right]{$s$} (b2)
                 -- node[below]{$c$} (w1)
            (w1) -- node[above]{$1$} (b1')
            (w2) -- node[above]{$1$} (b2')
            (b1) -- node[above]{$1$} (w1')
            (b2) -- node[above]{$1$} (w2');

            \node at (0,-2.5) {(b)};
        \end{scope}
    \end{tikzpicture}}
    \caption{The Ising model from \cref{fig:Ising_small_example} and the corresponding dimer model. Here $c=\tanh(2J)$ and $s=\operatorname{sech}(2J)$.}
    \label{fig:n2_GP_commutativity}
\end{figure}

\begin{example}\label{eg:n2_GP_commutativity}
Consider the Ising model in \cref{fig:n2_GP_commutativity}(a). The associated point in $\OGr^{>0}_{2,4}$ was computed in \cref{eg:ising_n_2}. Since Pl\"ucker coordinates are projective, we may divide~\eqref{eq:plucker_ising_eg} by $1+\langle \sigma_1 \sigma_2 \rangle^2$ and write the same point as
\[
\Delta_{12}=\Delta_{34}=\frac{2\langle \sigma_1 \sigma_2 \rangle}{1+\langle \sigma_1 \sigma_2 \rangle^2},\qquad
\Delta_{13}=\Delta_{24}=1,\qquad
\Delta_{14}=\Delta_{23}=\frac{1-\langle \sigma_1 \sigma_2 \rangle^2}{1+\langle \sigma_1 \sigma_2 \rangle^2}.
\]
Using the identities
\[
\frac{2\tanh J}{1+\tanh^2 J}=\tanh(2J),
\qquad
\frac{1-\tanh^2 J}{1+\tanh^2 J}=\operatorname{sech}(2J),
\]
and setting
\[
c:=\tanh(2J),\qquad s:=\operatorname{sech}(2J),
\]
we obtain
\begin{equation}\label{eq:plucker_sing_n2}
\Delta_{12}=c,\quad
\Delta_{13}=1,\quad
\Delta_{14}=s,\quad
\Delta_{23}=s,\quad
\Delta_{24}=1,\quad
\Delta_{34}=c.
\end{equation}

On the other hand, the embedding
\(
\mathcal I_\pi^{>0}\hookrightarrow \mathcal X_{\mathscr M_\pi}^{>0}
\)
sends the Ising model in \cref{fig:n2_GP_commutativity}(a) to the dimer model in \cref{fig:n2_GP_commutativity}(b), where the edge weights are
\[
a=s,\qquad b=c,\qquad c=s,\qquad d=c
\]
in the notation of \cref{fig:example_MS_twist_rotated}. Applying the boundary measurement computed in \cref{fig:example_MS_twist_rotated} and using
\[
s^2+c^2=\operatorname{sech}^2(2J)+\tanh^2(2J)=1,
\]
we get
\[
\Delta_{12}=c,\quad
\Delta_{13}=1,\quad
\Delta_{14}=s,\quad
\Delta_{23}=s,\quad
\Delta_{24}=s^2+c^2=1,\quad
\Delta_{34}=c,
\]
which is the same as~\eqref{eq:plucker_sing_n2}, verifying \cref{thm:gal_pyl_main} for this example.
\end{example}

We give another characterization of the image 
\[
\mathcal I_{G}^{>0} \hookrightarrow \mathcal X_{G^\square}^{>0}
\]
found in~\cite{GeorgeIsing} for Ising models on a torus. The proof given there however works for arbitrary surfaces. Let $\bar{G^\square}$ be the plabic graph obtained from $G^\square$ by changing the colors of all the vertices. We define two operations:
\begin{enumerate}
    \item Color-change: We replace $G^\square$ with $\overline{G^\square}$, i.e., change colors of all vertices, but keep all the edge weights the same. 
    \item Spider moves at all square faces: We perform a spider move at each square face of $G^\square$. 
\end{enumerate}

\begin{proposition}[George~\cite{GeorgeIsing}] \label{prop:George_characterization}
    $[\wt] \in \mathcal X_{G^\square}^{>0}$ is in the image of $\mathcal I_{G}^{>0}$ if and only if color-change results in the same gauge-equivalence class of edge weights as spider moves at all square faces.
\end{proposition}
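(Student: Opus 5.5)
We need to prove Proposition (George): $[\wt]$ is in the image of $\mathcal I_G^{>0}$ iff color-change gives the same gauge class as spider moves at all square faces. The plan is to translate both operations into $X$-cluster variables and compare them face by face.

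\emph{Reduction to face variables.} The $X_f$ are coordinates on $\mathcal X_{G^\square}^{>0}$, subject only to $\prod_f X_f=1$. After color-change and after the spider moves, the underlying plabic graphs agree: flipping all colors of a square face has the same combinatorial effect as a spider move there, up to contraction-uncontraction moves at the non-square faces. So the condition is that the two collections of $X$-variables on this common graph coincide. I will compute both.

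\emph{Color-change.} Reversing the orientation of every face boundary inverts each face variable, so $X_f\mapsto X_f^{-1}$ for every face.

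\emph{Spider moves.} Square faces of $G^\square$ are pairwise non-adjacent: they meet only at vertices, since every edge of a square face borders a vertex- or face-type face of $G$. Hence the spider moves at all squares commute, and \cref{fig:moves:X} applies independently at each one. A square face $X_e$ becomes $X_e^{-1}$, so its condition holds automatically. A non-square face $f$ gets multiplied by $(1+X_e^{\pm1})^{\pm1}$ for each adjacent square $e$, with the sign pattern alternating around $f$ according to which side of $e$ the face $f$ lies on. So the condition reads
\[
X_f^{-2}=\prod_{e\sim f}(\text{factor}_{e,f})\quad\text{for each non-square face } f.
\]

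\emph{Necessity.} For the weights of \cref{fig:medial_graph_to_G_square}, the square face $e$ has $X_e=(s_e/c_e)^{\pm 2}$ and the factor $1+X_e^{\pm1}$ equals $1/c_e^2$ or $1/s_e^2$, using $s^2+c^2=1$. A non-square face $f$ has $X_f=\prod_{e\sim f} c_e^{\pm1}$ or $\prod_{e\sim f} s_e^{\pm1}$, depending on whether $f$ corresponds to a vertex or a face of $G$. Substituting shows the identity holds term by term, exactly as in the $n=2$ computation of \cref{eg:n2_GP_commutativity}.

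\emph{Sufficiency (the main obstacle).} Given $[\wt]$ satisfying the conditions, I would define $c_e,s_e$ from the square variable by solving $X_e=(s_e/c_e)^{\pm2}$ with $s_e^2+c_e^2=1$. This always has a unique solution in $(0,1)^2$, and it yields $J_e$. The condition at each non-square face then forces $X_f$ to equal the Ising value, because both sides are positive and squares determine them. Since the face variables determine the gauge class, $[\wt]$ is the image of this $J$.

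The delicate points are the sign and orientation bookkeeping for which factor ($c$ or $s$, and with which exponent) each square contributes to each neighboring face, and the boundary faces, where the boundary edges of weight $1$ must be handled by the same local computation. I would fix the conventions on a single square face, as in the $n=2$ example, and then propagate them using the alternating orientation of $G^\times$.
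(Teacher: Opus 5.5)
Your argument is correct, and it takes a different route from the paper. The paper gives no proof of its own: it cites George's torus argument, remarks that it works on any surface, and illustrates the statement only by a direct edge-weight computation on one square.

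You instead use a fact special to the disk: the face variables $X_f$ are complete coordinates on $\mathcal X_{G^\square}^{>0}$. With that, color-change is $X_f\mapsto X_f^{-1}$. The spider moves at the squares commute, since the squares are in fact vertex-disjoint (not ``meeting at vertices'' as you wrote), and each acts by mutation using the original $X_e$. Everything then reduces to one local identity per square. Sufficiency becomes immediate, because $X_e$ determines $(s_e,c_e)$ on the open quarter circle and positivity then pins down each $X_f$. The price is that this does not extend beyond the disk. On a torus the face weights do not determine the gauge class, which is why a surface-independent proof has to work with edge weights and gauge directly.

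The bookkeeping you left open does close. However, the factor type alternates around each square and is constant around each non-square face, not ``alternating around $f$'' as you wrote.

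With the clockwise convention, $X_e=c_e^2/s_e^2$. The two vertex-type neighbours of a square are multiplied by $1+X_e=s_e^{-2}$, and the two face-type neighbours by $(1+X_e^{-1})^{-1}=c_e^{2}$.

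Along the boundary of a non-square face, square edges alternate with weight-$1$ edges. Hence $X_f=\prod_{e\sim f}s_e$ for vertex-type faces and $X_f=\prod_{e\sim f}c_e^{-1}$ for face-type faces. This uniformity is exactly what makes $X_f^{-1}=X_f\prod_{e\sim f}(\text{factor}_{e,f})$ hold term by term.

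That every square has the same local picture, with the $s$-edges always bordering vertex-type faces, follows from the alternating orientation of $G^\times$. Every region is a directed cycle, and all vertex-regions have the same rotation sense.
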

Under boundary measurement, color-change is the map $\Delta_I \mapsto \Delta_{[2n]\setminus I}$ while doing spider moves at all square faces does not change the boundary measurement. Since $\OGr^+_{n,2n}$ is defined by $\Delta_I = \Delta_{[2n]\setminus I}$ for all $I \in \binom{[2n]}{n}$, this gives an alternative proof that the composition 
\[
\mathcal I_\pi^{>0} \hookrightarrow \mathcal X^{>0}_{\mathscr{M}_\pi} \xrightarrow[]{\operatorname{Meas}_{\mathscr{M}_\pi}} \Pi^{>0}_{\mathscr{M}_\pi}
\]
lands in $\OGr_{n,2n}^{\geq 0}$. We will see that an identical characterization exists on the cluster $\mathcal A$ side (\cref{remark:col_change_sq}). However, the cluster ensemble and twist maps are not compatible with color-change and will need to be modified (see \cref{sec:ising_cluster_ensemble_mod_twist}).

\begin{figure}[ht]
    \centering
    \scalebox{0.8}{
    \begin{tikzpicture}
        \begin{scope}[shift={(-2,0)}]
            \def\r{2};
            \fill[black!5] (0,0) circle (1*\r cm);
            \draw[dashed, gray] (0,0) circle (\r cm);
            \coordinate[wvert] (w1) at (225:0.5*\r);
            \coordinate[wvert] (w2) at (45:0.5*\r);
            \coordinate[bvert] (b1) at (135:0.5*\r);
            \coordinate[bvert] (b2) at (315:0.5*\r);

            \coordinate[ label = below left: $u_4^\partial$] (b1') at (225:\r);
            \coordinate[ label = above right: $u_2^\partial$] (b2') at (45:\r);
            \coordinate[ label = above  left: $u_1^\partial$] (w1') at (135:\r);
            \coordinate[ label = below right: $u_3^\partial$] (w2') at (315:\r);

            \draw[] (w1) -- node[left]{$a$} (b1) --node[above]{$b$} (w2) --node[right]{$c$} (b2) --node[below]{$d$} (w1)
            (w1) -- node[above]{$1$} (b1')
            (w2) -- node[above]{$1$} (b2')
            (b1) -- node[above]{$1$} (w1')
            (b2) -- node[above]{$1$} (w2');
            \node at (0,-2.5) {(a)};
         \end{scope}
 
         \begin{scope}[shift={(5,0)}]
            \def\r{2};
            \fill[black!5] (0,0) circle (1*\r cm);
            \draw[dashed, gray] (0,0) circle (\r cm);
            \coordinate[bvert] (w1) at (225:0.5*\r);
            \coordinate[bvert] (w2) at (45:0.5*\r);
            \coordinate[wvert] (b1) at (135:0.5*\r);
            \coordinate[wvert] (b2) at (315:0.5*\r);

            \coordinate[ label = below left: $u_4^\partial$] (b1') at (225:\r);
            \coordinate[ label = above right: $u_2^\partial$] (b2') at (45:\r);
            \coordinate[ label = above  left: $u_1^\partial$] (w1') at (135:\r);
            \coordinate[ label = below right: $u_3^\partial$] (w2') at (315:\r);

            \draw[] (w1) -- node[left]{$a$} (b1) --node[above]{$b$} (w2) --node[right]{$c$} (b2) --node[below]{$d$} (w1)
            (w1) -- node[above]{$1$} (b1')
            (w2) -- node[above]{$1$} (b2')
            (b1) -- node[above]{$1$} (w1')
            (b2) -- node[above]{$1$} (w2');
            \node at (0,-2.5) {(b)};
        \end{scope}

        \begin{scope}[shift={(12,0)}]
            \def\r{2};
            \fill[black!5] (0,0) circle (1*\r cm);
            \draw[dashed, gray] (0,0) circle (\r cm);
            \coordinate[bvert] (w1) at (225:0.5*\r);
            \coordinate[bvert] (w2) at (45:0.5*\r);
            \coordinate[wvert] (b1) at (135:0.5*\r);
            \coordinate[wvert] (b2) at (315:0.5*\r);

            \coordinate[ label = below left: $u_4^\partial$] (b1') at (225:\r);
            \coordinate[ label = above right: $u_2^\partial$] (b2') at (45:\r);
            \coordinate[ label = above  left: $u_1^\partial$] (w1') at (135:\r);
            \coordinate[ label = below right: $u_3^\partial$] (w2') at (315:\r);

            \draw[] (w1) -- node[left]{$\frac{c}{ac+bd}$} (b1) --node[above]{$\frac{d}{ac+bd}$} (w2) --node[right]{$\frac{a}{ac+bd}$} (b2) --node[below]{$\frac{b}{ac+bd}$} (w1)
            (w1) -- node[above]{$1$} (b1')
            (w2) -- node[above]{$1$} (b2')
            (b1) -- node[above]{$1$} (w1')
            (b2) -- node[above]{$1$} (w2');
            
            \node at (0,-2.5) {(c)};
        \end{scope}

        \end{tikzpicture}}
    \caption{A plabic-graph $G^\square$ (a), the result of color-change (b) and spider moves at all square faces (c).}
    \label{fig:example_G_characterization}
\end{figure}

\begin{example}
Consider the dimer model $(G^\square,[\wt])$ shown in \cref{fig:example_G_characterization}(a). The results of color-change and spider moves at all square faces are shown in (b) and (c) respectively. The characterization in \cref{prop:George_characterization} says that the image of $\mathcal I_{G}^{>0}$ is defined by
\[
a = \frac{c}{ac+bd}, \quad b = \frac{d}{ac+bd},\quad c = \frac{a}{ac+bd},\quad d = \frac{b}{ac+bd}.
\]
The first and third equations imply that $a^2 = c^2$. Since $a,c>0$, this gives $a=c$. Similarly, the second and the fourth equations imply that $b=d$. Finally, any of the four equations gives $a^2+b^2=1$, so we get $s_e=a, c_e=b$.
\end{example}

    \section{\texorpdfstring{$C$}{C}-networks and locally Lagrangian positroid cells}\label{sec:4}
\subsection{\texorpdfstring{$C$}{C}-networks}

A \emph{$C$-network} is a pair $(G,C)$ where $G$ is a reduced cactus graph and 
    \[C: V(G) \sqcup F(G) \rightarrow \Rpos\]
    is a function defined modulo rescaling by $\Rpos$. Let \[ \mathcal C_G^{>0} \cong \Rpos^{|V(G)| + |F(G)|-1} \] be the \emph{space of $C$-networks with underlying cactus graph $G$.}

\begin{figure}[ht]
	\scalebox{0.8}{
    \begin{tikzpicture}[scale=1]
		\def\ep{0.3}
        
		\begin{scope}[shift={(-3,0)}, rotate=180]
			\def\r{2};
			\fill[black!5] (0,0) circle (1.05*\r cm);
			\coordinate[nvert] (n1) at (0,0);
			\coordinate[nvert] (n2) at (90:\r);
			\coordinate[nvert] (n3) at (90+120:\r);
			\coordinate[nvert] (n4) at (-30:\r);
    			
    		\node at (30:0.8*\r) {$C_{100}$};
            \node at (30+120:0.8*\r) {$C_{010}$};
            \node at (30-120:0.8*\r) {$C_{001}$};
    
            \node at (-30:1.3*\r) {$C_{101}$};
            \node at (-30+120:1.3*\r) {$C_{110}$};
            \node at (-30-120:1.3*\r) {$C_{011}$};
            
            \draw[-] (n1) edge (n2) edge (n3) edge (n4);
    		\node at (0,-0.4) {$C_{000}$};
		\end{scope}

		\node[](no) at (0.5,0) {$\leftrightarrow$};	
		
		\begin{scope}[shift={(4,0)}, rotate=180]
            \def\r{2};
            \fill[black!5] (0,0) circle (1.05*\r cm);
            
            \coordinate[nvert] (n2) at (90:\r);
            \coordinate[nvert] (n3) at (90+120:\r);
            \coordinate[nvert] (n4) at (-30:\r);

            \draw[-] (n2) --  (n3) -- (n4) -- (n2);
    
            \node at (30:0.8*\r) {$C_{100}$};
            \node at (30+120:0.8*\r) {$C_{010}$};
            \node at (30-120:0.8*\r) {$C_{001}$};
    
            \node at (-30:1.3*\r) {$C_{101}$};
            \node at (-30+120:1.3*\r) {$C_{110}$};
            \node at (-30-120:1.3*\r) {$C_{011}$};
    
            \node at (0,0) {$C_{111}$};
		  \end{scope}
	  \end{tikzpicture}}
	  \caption{The Y-$\Delta$ move for $C$-networks.} \label{fig:yd_C}
\end{figure}    

A Y-$\Delta$ move $G \to G'$ induces a map
\[
\mathcal C_G^{>0} \xrightarrow[]{\sim} \mathcal C_{G'}^{>0}
\]
given in the notation of \cref{fig:yd_C} by the \emph{positive Kashaev equation} 
\begin{align*}
C_{111} &= \frac{1}{C_{000}^2}\Bigg(
  2C_{100}C_{010}C_{001}
  + C_{000}\bigl(
    C_{100}C_{011}
    + C_{010}C_{101}
    + C_{001}C_{110}
  \bigr)\\
  & + 2 \sqrt{
    (C_{000}C_{011}+C_{010}C_{001})
    (C_{000}C_{101}+C_{100}C_{001})
    (C_{000}C_{110}+C_{100}C_{010})
    }\Bigg)
\end{align*}

By construction, we have a bijection
\[
V(G) \sqcup E(G) \sqcup F(G) \xrightarrow[]{\sim} F(G^\square).
\]
Note that the edges of $G$ correspond to the square faces of $G^\square$. We define an embedding
\[
\mathcal C_G^{>0} \hookrightarrow \mathcal A_{G^\square}^{>0}
\]
as follows. We let $A_x := C_x$ for $x \in V(G) \sqcup F(G)$ and for an edge $e = uv\in E(G)$ incident to faces $f,g$, we let 
\begin{equation} \label{eq:A_in_terms_of_C}
    A_e := \sqrt{C_u C_v + C_f C_g}.
\end{equation}

\begin{remark} \label{remark:col_change_sq}
We can interpret~\eqref{eq:A_in_terms_of_C} as saying that color-change gives the same $A$-network as applying spider moves at all square faces:
\begin{enumerate}
    \item Color-change: We replace $G^\square$ with $\overline{G^\square}$, i.e., change colors of all vertices, but keep the face weights the same.
    \item Spider moves at all square faces: We perform a spider move at each square face of $G^\square$, which replaces $A_e$ by $({A_u A_v+A_f A_g})/{A_e}$.  
\end{enumerate}
Setting $({A_u A_v+A_f A_g})/{A_e} = A_e$, we get~\eqref{eq:A_in_terms_of_C}.
\end{remark}

\begin{proposition}[Kenyon--Pemantle~\cite{KenyonPemantle1}] \label{prop:C_to_A_yd}
Let $G \to G'$ be a Y-$\Delta$ move. Then there is a sequence of moves $G^\square \to (G')^\square$ such that the following diagram commutes:
\[
\begin{tikzcd}
\mathcal C_{G}^{>0} 
  \arrow[r, "\sim"] \arrow[d, hook] 
& 
\mathcal C_{G'}^{>0} 
  \arrow[d, hook] 
\\
\mathcal A_{G^\square}^{>0}
    \arrow[r,   "\sim"]
& 
\mathcal A_{(G')^\square}^{>0}
  \end{tikzcd}.
\]
In particular, the embeddings 
\(
\mathcal C_{G}^{>0} \hookrightarrow \mathcal A_{G^\square}^{>0}
\)
glue to give an embedding 
\[
\mathcal C_{\pi}^{>0}\hookrightarrow \mathcal A_{\mathscr{M}_\pi}^{>0},
\]
where $\pi=\pi_G=\pi_{G'}$.
\end{proposition}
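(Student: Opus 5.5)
The plan is to reduce everything to a local computation near the three edges involved in the Y-$\Delta$ move, and to use the color-change symmetry of \cref{remark:col_change_sq} to avoid writing the full rational expressions.

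\textbf{Step 1: the local picture and the move sequence.} Let $G\to G'$ be the Y-$\Delta$ move of \cref{fig:yd_C}. Outside a disk containing the three edges, $G^\square$ and $(G')^\square$ agree, and so do the embeddings $\mathcal C\hookrightarrow\mathcal A$. It therefore suffices to work inside that disk. There $G^\square$ consists of:
\begin{enumerate}
\item three square faces, carrying $A_{e_1},A_{e_2},A_{e_3}$ with $A_{e_i}^2=C_uC_v+C_fC_g$;
\item the hexagonal face of the central vertex, carrying $C_{000}$;
\item the surrounding faces, carrying $C_{100},C_{010},C_{001},C_{011},C_{101},C_{110}$.
\end{enumerate}
I would take the standard sequence of moves realizing the star-triangle move on the associated plabic graph (as in Kenyon--Pemantle and Galashin--Pylyavskyy): contraction-uncontraction moves to put the graph in spider form, then spider moves in the order square faces, central face, square faces. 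I would fix this sequence explicitly by tracking the strands. Since $\pi_{G^\square}=\pi_G=\pi_{G'}=\pi_{(G')^\square}$ and both graphs are reduced, \cref{thm:post_bijections} guarantees that such a sequence exists. The explicit form is needed only to identify which final faces are the new squares and which is the new central (triangle) face $C_{111}$.

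\textbf{Step 2: the mutated network lies in the image of $\mathcal C_{G'}^{>0}$.} The image of $\mathcal C_G^{>0}$ is exactly the set of $A$-networks on which color-change agrees with spider moves at all square faces (\cref{remark:col_change_sq}). Color-change is compatible with moves: it sends a spider or contraction move to the color-reversed move, with the same exchange relation, and it leaves the outer faces unchanged. So I would show that applying the move sequence to a symmetric network produces a network that again satisfies this symmetry, now with respect to the square faces of $(G')^\square$. Concretely, conjugating the move sequence by color-change should give a sequence differing only by spider moves at the final squares. Granting this, the output of the sequence is automatically of the form \eqref{eq:A_in_terms_of_C} for some $C'$ on $G'$. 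The six outer $C$-values are unchanged, since those faces are untouched, and this yields a well-defined map $\mathcal C_G^{>0}\to\mathcal C_{G'}^{>0}$ making the square commute.

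\textbf{Step 3: identification with the positive Kashaev equation (the main obstacle).} It remains to show that the new central value equals the $C_{111}$ given by the positive Kashaev equation. I would eliminate the intermediate variables from the exchange relations. Substituting $A_{e_i}=\sqrt{C_uC_v+C_fC_g}$, the new central variable becomes a rational expression in $C_{000},\dots,C_{110}$ and the three square roots. The Step 2 symmetry should force $C_{111}$ to satisfy the quadratic Kashaev relation, namely the vanishing of Cayley's hyperdeterminant in the eight $C$'s. Among its two roots, positivity of all cluster variables selects the one with the $+2\sqrt{\cdots}$ term.

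The delicate points are bookkeeping the product of the three square roots, which appears as exactly the product $\sqrt{(\,)(\,)(\,)}$ in the Kashaev formula, and choosing the correct root. If the conjugation argument in Step 2 proves awkward, I would instead verify Step 3 by direct computation of the mutation sequence. That computation is a finite identity in eight variables.
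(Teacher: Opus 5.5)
The paper does not prove this statement itself; it cites Kenyon--Pemantle. Your outline (localize, transport the color-change symmetry of \cref{remark:col_change_sq} through the move sequence, then compute the new central variable) is a sound way to prove it, but Step 3 as written contains a step that fails, and Steps 1 and 2 each need a small repair.

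\textbf{Step 2.} You do not need the claim that conjugating the sequence ``should give a sequence differing only by spider moves''. Let $\sigma,\sigma'$ be the spider moves at all squares and $\bar\mu$ the color-reversed sequence. Then $\bar\mu\circ\sigma$ and $\sigma'\circ\mu$ are both move sequences from $G^\square$ to $\overline{(G')^\square}$. By the path-independence contained in \cref{thm:scott_MS}, they induce the same map on $A$-networks. Since exchange relations do not depend on colors, this shows symmetric networks go to symmetric networks.

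\textbf{Step 1.} If you flip all three squares first, you are simply at $\overline{G^\square}$, and the central face is again a hexagon. It is a quadrilateral only after one or two adjacent squares have been flipped.

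\textbf{The gap: root selection in Step 3.} Positivity does not pick out the $+$ root. Take $C_{000}=C_{100}=C_{010}=C_{001}=1$, $C_{011}=8$, $C_{101}=C_{110}=1/3$. Then $A_{e_1}=3$ and $A_{e_2}=A_{e_3}=2/\sqrt3$, and the two roots of the Kashaev quadratic are $32/3\pm 8$. Both are positive, and either one gives positive values $\sqrt{C_uC_v+C_fC_g}$ on the new squares. So the sign has to come out of the elimination itself.

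\textbf{How to close it.} Combining Step 2 with two mutations does this cheaply. Let $e_1$ be the edge to $C_{011}$ and flip its square. The central face becomes a quadrilateral in which $A_{e_1}$ (unchanged, by symmetry) is opposite $C_{100}$ and $A_{e_2}$ is opposite $A_{e_3}$. Mutating it gives
\[
Q=\frac{A_{e_2}A_{e_3}+C_{100}A_{e_1}}{C_{000}}.
\]
Continue by mutating the $e_3$-face, then the $e_2$-face, and finally the $e_1$-face, which becomes the new central face. The face carrying $Q$ is not touched again; it is the square of the triangle edge joining $C_{101}$ and $C_{110}$. By Step 2 (and because flipping a symmetric square does not change its value), $Q^2=C_{101}C_{110}+C_{100}C_{111}$. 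Solve for $C_{111}$ and substitute $A_{e_1}^2=C_{000}C_{011}+C_{010}C_{001}$ and its analogues. This gives exactly the positive Kashaev formula, with $+2A_{e_1}A_{e_2}A_{e_3}$ coming from the cross term of $Q^2$.

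Your fallback also works without Step 2. If you eliminate through all five mutations, the non-radical terms assemble into $2C_{100}C_{010}C_{001}+C_{000}(C_{100}C_{011}+C_{010}C_{101}+C_{001}C_{110})$ over $C_{000}^2$. The two leftover terms each reduce to $A_{e_1}A_{e_2}A_{e_3}/C_{000}^2$, so again no choice of root is involved.
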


We now define the subset of $\Pi_{\mathscr{M}_\pi}$ parameterized by $C$-networks. We denote by $\mathcal S_G^\square$ the set of all $I \in \binom{[2n]}{n}$ which appear as the target label of a square face of $G^\square$, where $G$ is a reduced cactus graph with $\pi_G=\pi$, and let $\mathcal S_\pi^\square := \bigcup_{G:\pi_G=\pi} \mathcal S_{G}^\square$.

Any such $I$ is of the form $S i\bar i$ where:
\begin{enumerate}
    \item $\{\gamma_{\{ i, \bar i\}}$, $\gamma_{\{j,\bar{j} \}} \}$ is a \emph{crossing}, i.e., a pair of strands $\{\gamma_{\{ i, \bar i\}}$, $\gamma_{\{j,\bar{j} \}} \}$ with \( i<j<\bar{i}<\bar{j}\) in cyclic order around the boundary of \( \disk \).
    \item $S$ is an $(n-2)$-element subset of $[2n]$ that contains exactly one element from each pair $\{k,\bar k\}$ for all strands $\gamma_{\{k,\bar k\}} \neq \gamma_{\{i,\bar i\}},\gamma_{\{j,\bar j\}}$.
\end{enumerate}
Consider the condition
\begin{equation}\label{eq:cond_plucker_equal}
    \Delta_{S i \bar i }(V) = \Delta_{S j \bar j}(V).
\end{equation}

\begin{definition}[Locally Lagrangian positroid cells]
\label{def:LocallyLagrangianPositroidCells}
For any matching $\pi \in P_n$, we define the \emph{locally Lagrangian positroid cell} $\Lambda_{\pi}^{>0} \subset \Pi_{\mathscr{M}_\pi}^{>0}$ as the subset of subspaces $V$ that satisfy \eqref{eq:cond_plucker_equal} for every $I \in \mathcal S_\pi^\square$.
\end{definition}

 When $\pi$ is the matching $i \mapsto i+n$ (modulo~$2n$), $\Lambda_\pi^{>0}$ is the intersection of the Lagrangian Grassmannian with $\Gr_{n,2n}^{>0}$ or equivalently the space of positive-definite Hermitian matrices~\cite[Section 5]{KenyonPemantle2}. In general, \eqref{eq:cond_plucker_equal} can be viewed as a local Lagrangian condition, but we do not know a global description of these spaces.

\begin{proposition}
\label{prop:local_lagrangian_condition}
Let $\pi$ be a matching on $[2n]$ and let $V\in \Pi_{\mathscr M_\pi}^{>0}$. Then $V \in \Lambda_\pi^{>0} $ if and only if the following condition holds:
\begin{quote}
    For every square-face label $I = S i\bar i\in \mathcal{S}_{\pi}^\square$,
let
\(
S j \bar{j}
\)
be the label obtained from the same local square after applying the spider move such that
\(
i<j<\bar{i}<\bar{j}.
\)
Set
\(
W_S:=\Span\{e_s: s \in  [2n] \setminus S\}
\subset \R^{2n},
U_{ij}:=\Span\{e_i,e_{\bar i},e_j,e_{\bar j}\},
\)
and let 
\[
\mathrm{pr}_{ij}:\R^{2n}\to U_{ij}
\]
be the coordinate projection. We equip the space \(U_{ij}\) with the symplectic form
\[
\omega_{ij} := e^\ast_{i} \wedge e^\ast_{\bar{i}}  - \epsilon_{Sij} e^\ast_{j} \wedge e^\ast_{\bar{j}}, \quad \text{where} \quad  \epsilon_{Sij} := (-1)^{|S \cap (i,j)| \ + \ |S \cap (\bar i,\bar j)|}.
\]
Here $(i,j)$ is the open interval  $(i,j) := \{i+1, ..., j-1\}$. Then the vector space
\(
\mathrm{pr}_{ij}\bigl(V\cap W_S\bigr)
\subset U_{ij}
\)
is a Lagrangian $2$-plane with respect to $\omega_{ij}$.
\end{quote}

\end{proposition}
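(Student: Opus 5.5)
The plan is to reduce the statement, one square-face label at a time, to a computation with $2\times 2$ minors. Fix $I=Si\bar i\in\mathcal S_\pi^\square$ with partner label $Sj\bar j$.

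\textbf{Step 1: the plane $P=\mathrm{pr}_{ij}(V\cap W_S)$ is $2$-dimensional.} Note that $W_S$ is spanned by the $e_s$ with $s\notin S$, not by those with $s\in S$. Choose a matrix representative $M$ of $V$ whose rows are adapted to $S$, as follows.

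Since $\Delta_{Si\bar i}(V)>0$, the columns of $M$ indexed by $S$ are linearly independent. So we may row reduce until the columns indexed by $S$ form the identity in the first $n-2$ rows and vanish in the last two rows. The last two rows then span $V\cap\{x:x_s=0 \text{ for } s\in S\}=V\cap W_S$. This subspace has dimension exactly $2$, because $V$ has dimension $n$ and we imposed $n-2$ independent conditions.

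Call the last two rows $R$, a $2\times 2n$ matrix. By cofactor expansion along the identity block, for every pair $a,b\notin S$ we get
\[
\Delta_{Sab}(V)=\pm\det R_{\{a,b\}}.
\]
Here the sign is the one obtained by moving the indices $a,b$ into sorted position relative to $S$.

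Now project to the coordinates $i,\bar i,j,\bar j$. The projection $\mathrm{pr}_{ij}(V\cap W_S)$ is the rowspan of the $2\times 4$ submatrix $R'$ of $R$ on the columns $i,\bar i,j,\bar j$. This rowspan is $2$-dimensional because $\det R'_{\{i,\bar i\}}=\pm\Delta_{Si\bar i}\neq0$.

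\textbf{Step 2: the Lagrangian condition is a linear relation between minors.} A $2$-plane spanned by rows $x,y$ is Lagrangian for $\omega_{ij}$ exactly when $\omega_{ij}(x,y)=0$. Expanding the form gives
\[
\omega_{ij}(x,y)=\det R'_{\{i,\bar i\}}-\epsilon_{Sij}\det R'_{\{j,\bar j\}}.
\]
Substituting Step 1, the Lagrangian condition becomes
\[
\sigma_1\Delta_{Si\bar i}=\epsilon_{Sij}\,\sigma_2\,\Delta_{Sj\bar j},
\]
where $\sigma_1,\sigma_2$ are the sorting signs from Step 1. Both Plücker coordinates are positive on $\Pi^{>0}_{\mathscr M_\pi}$, since both labels are face labels of reduced plabic graphs for $\mathscr M_\pi$ (the two labels are exchanged by the spider move). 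So this condition is equivalent to $\Delta_{Si\bar i}=\Delta_{Sj\bar j}$, which is \eqref{eq:cond_plucker_equal}, provided that
\[
\sigma_1\sigma_2=\epsilon_{Sij}.
\]

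\textbf{Step 3: the sign identity $\sigma_1\sigma_2=\epsilon_{Sij}$.} This is the main obstacle. Use the convention $\Delta_{Sab}=\det M_{\text{sorted}}$. With the identity block placed first, the sign incurred when $a<b$ is $(-1)^{\#\{s\in S:s>a\}+\#\{s\in S:s>b\}}$, up to a global constant that is the same for both pairs.

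For the pair $(i,\bar i)$ this exponent is $|S\cap(i,\bar i)|$ modulo $2$, adjusted by a common term; for $(j,\bar j)$ it is $|S\cap(j,\bar j)|$ modulo $2$. Also account for the orientation of the determinant when $\bar i<i$ as integers. Using the cyclic order $i<j<\bar i<\bar j$ with a fixed linear representative, the difference of the two exponents is
\[
|S\cap(i,\bar i)|-|S\cap(j,\bar j)|\equiv |S\cap(i,j)|+|S\cap(\bar i,\bar j)| \pmod 2,
\]
because the middle interval $(j,\bar i)$ cancels. The endpoints $j$ and $\bar i$ are not in $S$, since $S$ contains no element of either crossing strand. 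This is exactly the exponent in $\epsilon_{Sij}$.

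I would check the cyclic wraparound cases separately, for example when $\bar j<i$ as integers. The plan there is to verify that the sign change from reordering the determinant columns compensates exactly, and to test the result against the case $n=2$ of Example~\ref{eg:ising_n_2}, where $S=\emptyset$ and $\Delta_{13}=\Delta_{24}$.

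Combining the three steps over all $I\in\mathcal S_\pi^\square$ gives exactly the defining conditions of $\Lambda_\pi^{>0}$, which proves the equivalence.
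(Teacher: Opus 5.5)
Your argument is the paper's proof. You row-reduce so that the $S$-columns form an identity block, read $V\cap W_S$ off the two remaining rows, identify their $2\times 2$ minors with $\Delta_{Sab}(V)$ up to the sign $(-1)^{|S\cap(a,b)|}$ for $a<b$, and use that the middle interval $(j,\bar i)$ cancels in the parity count. When $i<j<\bar i<\bar j$ holds as integers, this is complete and correct.

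The step you defer would not go through as planned: in some wrap-around labelings the signs do \emph{not} compensate. Take $n=2$. The only square label is $24$ and $S=\emptyset$, so $\epsilon_{Sij}=1$ under any interval convention. The labeling forced by $I=24$ is $(i,j,\bar i,\bar j)=(2,3,4,1)$. This gives $\omega_{ij}=e_2^\ast\wedge e_4^\ast+e_1^\ast\wedge e_3^\ast$, whose Lagrangian planes satisfy $\Delta_{24}=-\Delta_{13}$. That never happens on $\Pi^{>0}_{\mathscr M_\pi}$, so your proposed test against Example~\ref{eg:ising_n_2} would fail.

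More generally, suppose the crossing has endpoints $a<b<c<d$ and $\{i,\bar i\}=\{b,d\}$. Reading the intervals cyclically, your computation yields $\Delta_{Si\bar i}=(-1)^{n-1}\Delta_{Sj\bar j}$, which is wrong for even $n$.

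The remedy is to avoid this case rather than verify it. Always name the strands so that $(i,j,\bar i,\bar j)=(a,b,c,d)$, i.e.\ let $Si\bar i$ be whichever of the square's two labels contains the smallest endpoint $a$. Since \eqref{eq:cond_plucker_equal} is symmetric in the two labels, the resulting conditions are still exactly those defining $\Lambda_\pi^{>0}$, and your Step~3 then covers every square. This is the normalization the paper's proof uses tacitly: its sign $(-1)^{|S\cap(i,\bar i)|}$ already presupposes $i<\bar i$, and its final equivalence presupposes the linear order.
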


\begin{proof}
Let $M$ be an $n\times 2n$ matrix whose rowspan is $V$. Let $M'$ be the matrix obtained from $M$ by permuting the columns so that the columns corresponding to $S$ appear as the first $n-2$ columns. Then,
\begin{equation}\label{eq:pluck_change}
    \Delta_{S i \bar i}(M') = (-1)^{|S \cap (i,\bar i)|}  \Delta_{S i \bar i}(M), \qquad \Delta_{S j \bar j}(M') = (-1)^{|S \cap (j,\bar j)|}  \Delta_{S j \bar j}(M).
\end{equation}
Since $\Delta_{Si \bar i}(V) \neq 0$, by performing row operations, we may assume that the columns in $S i \bar i$ contain the identity matrix. Then $V \cap W_S$ is the row span of the matrix obtained from $M$ by deleting the rows corresponding to $S$ in the identity matrix, which leaves a $2 \times 2n$ matrix. The projection $\mathrm{pr}_{ij}\bigl(V\cap W_S\bigr)$ is then obtained by deleting the columns other than $i,\bar i, j, \bar j$. By construction, every Pl\"ucker coordinate of $\mathrm{pr}_{ij}\bigl(V\cap W_S\bigr)$ is equal to the Pl\"ucker coordinate of $M'$ obtained by adjoining $S$. Thus $\mathrm{pr}_{ij}\bigl(V\cap W_S\bigr)$ is Lagrangian in $U_{ij}$ if and only if $\Delta_{S i \bar i}(M') = \epsilon_{Sij}\Delta_{S j \bar j}(M')$. By~\eqref{eq:pluck_change}, this is equivalent to~\eqref{eq:cond_plucker_equal}.
\end{proof}

\begin{example}[$n=3$] We consider the matching $\pi$ from \cref{fig:medialGraphExample}. The cell $\Lambda^{>0}_\pi$ is cut out by the following conditions:
\begin{enumerate}
    \item $\Delta_{126} = \Delta_{345} = 0$ and  $\Delta_I > 0$ for $I \in \binom{[6]}{3} \setminus \{\{1,2,6\}, \{3,4,5\}\}$ (positroid condition),
    
    \item $\Delta_{356} = \Delta_{146}$ and $\Delta_{134} = \Delta_{236}$ (square faces condition).
\end{enumerate}
\end{example}

\begin{definition}[The map $\bm{H}_G$ from locally Lagrangian cells to $C$-networks]\label{def:H_G}
Given a reduced cactus graph $G$ with $\pi_G=\pi$, we define the map
\[
\bm H_G: \Lambda_\pi^{>0} \rightarrow \mathcal C_G^{>0}, \qquad V \mapsto (\Delta_{\bm T(x)}(V))_{x \in V(G) \sqcup F(G)},
\]
where $\bm T(x)$ denotes the target face label of the corresponding face of $G^\square$.
\end{definition}

Note that the condition~\eqref{eq:cond_plucker_equal} implies~\eqref{eq:A_in_terms_of_C} and therefore the following diagram commutes:
\begin{equation}\label{eq:C_A}
    \begin{tikzcd}
\mathcal C_{G}^{>0} 
  \arrow[r, hook]  
& 
\mathcal A_{G^\square}^{>0} 
\\
\Lambda_\pi^{>0}
    \arrow[r,hook]\arrow[u,"\bm H_G" ]
& 
\Pi_{\mathscr{M}_\pi}^{>0}   \arrow[u,"\bm F_{G^\square}"'] 
  \end{tikzcd}.
\end{equation}

\begin{lemma}
If $G \to G'$ is a Y-$\Delta$ move, the following diagram commutes:
\[
\begin{tikzcd}
\mathcal C_{G}^{>0} 
  \arrow[rr, "\sim"] 
& &
\mathcal C_{G'}^{>0} 
\\
& 
\Lambda_\pi^{>0}\arrow[ur,"{\bm H_{G'}}"' ]\arrow[ul,"{\bm H_{G}}" ]
&
  \end{tikzcd}.
\]
\end{lemma}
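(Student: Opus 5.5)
We reduce the statement to the commutativity of the $A$-network diagram of \cref{thm:scott_MS}, using the embedding $\mathcal C_G^{>0}\hookrightarrow\mathcal A_{G^\square}^{>0}$ and \cref{prop:C_to_A_yd}. Fix $V\in\Lambda_\pi^{>0}$. By \eqref{eq:C_A}, the image of $\bm H_G(V)$ in $\mathcal A_{G^\square}^{>0}$ equals $\bm F_{G^\square}(V)$, and similarly for $G'$. By \cref{prop:C_to_A_yd}, there is a sequence of moves $G^\square\to (G')^\square$ for which the square with the Kashaev map on top and the $A$-network mutation map on the bottom commutes. Applying \cref{thm:scott_MS} once per move, the $A$-mutation sends $\bm F_{G^\square}(V)$ to $\bm F_{(G')^\square}(V)$.

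Chasing the diagram, the Kashaev image of $\bm H_G(V)$ and the $C$-network $\bm H_{G'}(V)$ have the same image in $\mathcal A_{(G')^\square}^{>0}$. Since $\mathcal C_{G'}^{>0}\hookrightarrow\mathcal A_{(G')^\square}^{>0}$ is injective, they are equal, which is the claimed commutativity.

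One point needs checking. The intermediate graphs in the move sequence $G^\square\to(G')^\square$ need not be of the form $H^\square$, so $V$ is not constrained there by \eqref{eq:cond_plucker_equal}. This is harmless, because \cref{thm:scott_MS} holds on all of $\Pi^{>0}_{\mathscr M_\pi}$. Only the endpoints use the locally Lagrangian condition, through \eqref{eq:C_A} for $G$ and for $G'$. The latter requires the square-face labels of $(G')^\square$ to lie in $\mathcal S^\square_\pi$, which holds by definition of $\mathcal S^\square_\pi$ as a union over all $G$ with $\pi_G=\pi$.

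The main obstacle is sign and branch bookkeeping, namely confirming that the positive Kashaev formula (with the $+2\sqrt{\cdot}$ branch) is exactly the map that \cref{prop:C_to_A_yd} asserts is compatible. Since that proposition is given, this is already settled. The only remaining check is that all quantities are positive, so the square roots in \eqref{eq:A_in_terms_of_C} are taken with the positive branch, consistent with the $A$-variables being positive Plücker coordinates.
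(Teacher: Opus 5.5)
Your proposal is correct and follows the paper's proof essentially verbatim. Both are a diagram chase that combines four ingredients:
\begin{itemize}
\item the commutative diagram \eqref{eq:C_A} for $G$ and for $G'$;
\item the Kenyon--Pemantle compatibility of \cref{prop:C_to_A_yd};
\item the move-compatibility of $\bm F$ from \cref{thm:scott_MS};
\item the injectivity of $\mathcal C_{G'}^{>0}\hookrightarrow\mathcal A_{(G')^\square}^{>0}$.
\end{itemize}
You also check two points the paper leaves implicit, and both checks are right: the intermediate plabic graphs need no locally Lagrangian condition, and the square-face labels of $(G')^\square$ lie in $\mathcal S^\square_\pi$.
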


\begin{proof}
The proof is a simple diagram chase. By injectivity of $\mathcal C_{G'}^{>0} \hookrightarrow \mathcal A_{(G')^\square}^{>0}$, it suffices to show that
\[
\Lambda_\pi^{>0} \xrightarrow[]{\bm H_{G}}\mathcal C_{G}^{>0} \xrightarrow[]{\sim }\mathcal C_{G'}^{>0} \hookrightarrow \mathcal A_{(G')^\square}^{>0} = \Lambda_\pi^{>0} \xrightarrow[]{\bm H_{G'}}\mathcal C_{G'}^{>0} \hookrightarrow \mathcal A_{(G')^\square}^{>0}. 
\]
By \cref{prop:C_to_A_yd}, we have 
\[
\Lambda_\pi^{>0} \xrightarrow[]{\bm H_{G}}\mathcal C_{G}^{>0} \xrightarrow[]{\sim }\mathcal C_{G'}^{>0} \hookrightarrow \mathcal A_{(G')^\square}^{>0} = \Lambda_\pi^{>0} \xrightarrow[]{\bm H_{G}}\mathcal C_{G}^{>0}\hookrightarrow \mathcal A_{G^\square}^{>0}\xrightarrow[]{\sim }\mathcal A_{(G')^\square}^{>0}. 
\]
By \eqref{eq:C_A} and the corresponding diagram for $G'$, we have
\begin{align*}
\Lambda_\pi^{>0} \xrightarrow[]{\bm H_{G}}\mathcal C_{G}^{>0}\hookrightarrow \mathcal A_{G^\square}^{>0}\xrightarrow[]{\sim }\mathcal A_{(G')^\square}^{>0} &= \Lambda_\pi^{>0} \hookrightarrow \Pi^{>0}_{\mathscr{M}_\pi} \xrightarrow[]{\bm F_{G^\square}} \mathcal A_{G^\square}^{>0}\xrightarrow[]{\sim }\mathcal A_{(G')^\square}^{>0},\\
\Lambda_\pi^{>0} \xrightarrow[]{\bm H_{G'}}\mathcal C_{G'}^{>0} \hookrightarrow \mathcal A_{(G')^\square}^{>0} &= \Lambda_\pi^{>0} \hookrightarrow \Pi^{>0}_{\mathscr{M}_\pi} \xrightarrow[]{\bm F_{(G')^\square}} \mathcal A_{(G')^\square}^{>0},
\end{align*}
and these are equal by \cref{thm:scott_MS}.
\end{proof}

\begin{proposition}
    The map $\bm H_G$ is a bijection.
\end{proposition}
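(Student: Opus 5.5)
We will deduce bijectivity of $\bm H_G$ from \cref{thm:scott_MS}, applied to $\Gamma = G^\square$, together with the commutative square \eqref{eq:C_A}. Write $\iota_G:\mathcal C_G^{>0}\hookrightarrow \mathcal A_{G^\square}^{>0}$ for the embedding defined by \eqref{eq:A_in_terms_of_C}. By \eqref{eq:C_A}, $\iota_G\circ \bm H_G = \bm F_{G^\square}|_{\Lambda_\pi^{>0}}$.

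\emph{Injectivity.} If $\bm H_G(V)=\bm H_G(V')$, then applying $\iota_G$ gives $\bm F_{G^\square}(V)=\bm F_{G^\square}(V')$. Since $\bm F_{G^\square}$ is injective on all of $\Pi^{>0}_{\mathscr M_\pi}$, we get $V=V'$.

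\emph{Surjectivity.} Given $C\in\mathcal C_G^{>0}$, set $V:=\bm F_{G^\square}^{-1}(\iota_G(C))\in\Pi^{>0}_{\mathscr M_\pi}$. By construction, the face labels of $V$ on the non-square faces of $G^\square$ recover $C$. So $\bm H_G(V)=C$ holds as soon as we show $V\in\Lambda_\pi^{>0}$.

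For the square faces of $G^\square$ itself, membership is immediate. Consider a square face $e=uv$ with neighbouring faces $f,g$, target label $Si\bar i$, and post-spider label $Sj\bar j$. The three-term Plücker relation gives
\[
\Delta_{Si\bar i}\,\Delta_{Sj\bar j}=\Delta_{\bm T(u)}\Delta_{\bm T(v)}+\Delta_{\bm T(f)}\Delta_{\bm T(g)}.
\]
Since $\Delta_{Si\bar i}(V)=A_e=\sqrt{C_uC_v+C_fC_g}$, this forces $\Delta_{Sj\bar j}(V)=A_e=\Delta_{Si\bar i}(V)$, which is \eqref{eq:cond_plucker_equal}. This is the content of \cref{remark:col_change_sq}. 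One must check that the labels of $u,v,f,g$ are indeed the four neighbours appearing in this Plücker relation, which is the labelling of \cref{fig:moves_labels_plabic}.

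\emph{Main obstacle.} $\Lambda_\pi^{>0}$ imposes \eqref{eq:cond_plucker_equal} for labels in $\mathcal S_\pi^\square$, a union over \emph{all} reduced $G'$ with $\pi_{G'}=\pi$, not only for the square faces of $G$. I would handle this by induction along a sequence of Y-$\Delta$ moves $G=G_0\to G_1\to\cdots\to G_m=G'$, which exists by the classification proposition for reduced cactus graphs. Suppose $V$ satisfies the square conditions for $G_{k}$, so that $V=\bm F_{G_k^\square}^{-1}(\iota_{G_k}(C^{(k)}))$ with $C^{(k)}:=\bm H_{G_k}(V)$. Let $C^{(k+1)}$ be the image of $C^{(k)}$ under the Kashaev move. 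By \cref{prop:C_to_A_yd}, $\iota_{G_{k+1}}(C^{(k+1)})$ is the image of $\iota_{G_k}(C^{(k)})$ under the corresponding sequence of plabic moves. By \cref{thm:scott_MS} (compatibility \eqref{eq:comm:moves_A}), this image equals $\bm F_{G_{k+1}^\square}(V)$. Hence the square-face Plücker coordinates of $V$ in $G_{k+1}^\square$ have the form \eqref{eq:A_in_terms_of_C}, and the three-term relation argument above gives \eqref{eq:cond_plucker_equal} for all square labels of $G_{k+1}$.

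Iterating over all such sequences yields $V\in\Lambda_\pi^{>0}$. This completes surjectivity and hence proves the proposition. The delicate point in this step is confirming that \cref{prop:C_to_A_yd} exactly matches square-face $A$-variables to the form \eqref{eq:A_in_terms_of_C} after the move. This holds because the bottom arrow of that diagram lands in the image of $\iota_{G'}$.
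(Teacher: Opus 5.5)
Your proposal is correct and follows the paper's proof. Injectivity comes from the commutative square \eqref{eq:C_A} together with injectivity of $\bm F_{G^\square}$. Surjectivity comes from pulling $\iota_G(C)$ back via $\bm F_{G^\square}^{-1}$, checking \eqref{eq:cond_plucker_equal} on the square faces of $G$, and propagating it to every move-equivalent $G'$ via \cref{prop:C_to_A_yd}. You make explicit two steps the paper leaves implicit: the three-term Pl\"ucker relation behind ``by construction,'' and the use of the compatibility \eqref{eq:comm:moves_A} from \cref{thm:scott_MS} when inducting along a sequence of Y-$\Delta$ moves.
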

\begin{proof}
$\bm H_G$ is injective since all the other maps in~\eqref{eq:C_A} are injective. For surjectivity, given $C \in \mathcal C_G^{>0}$, the composition $\mathcal C_{G}^{>0} \hookrightarrow \mathcal A_{G^\square}^{>0}  \xrightarrow[]{\bm F_{G^\square}^{-1}} \Pi^{>0}_{\mathscr{M}_\pi}$ gives us a point $V \in \Pi^{>0}_{\mathscr{M}_\pi}$ and it remains to show that $V \in \Lambda_\pi^{>0}$, i.e., to check~\eqref{eq:cond_plucker_equal}. The conditions hold for $I \in \mathcal S^\square_G$ by construction, and they hold for $I \in \mathcal S^\square_{G'}$ for $G'$ move-equivalent to $G$ by \cref{prop:C_to_A_yd}. 
\end{proof}

Gluing the maps $(\bm H_G)_{G: \pi_G=\pi}$, we obtain a bijection $\bm H_\pi: \Lambda_\pi^{>0} \rightarrow \mathcal C_\pi^{>0}$ such that the following diagram commutes:
\[
\begin{tikzcd}
\mathcal C_{\pi}^{>0} 
  \arrow[r, hook]  
& 
\mathcal A_{\mathscr{M}_\pi}^{>0} 
\\
\Lambda_\pi^{>0}
    \arrow[r,hook]\arrow[u,"\bm H_\pi" ]
& 
\Pi_{\mathscr{M}_\pi}^{>0}   \arrow[u,"\bm F_{\mathscr{M}_\pi}"'] 
  \end{tikzcd}.
\]

\section{The Ising cluster ensemble and twist maps} \label{sec:ising_cluster_ensemble_mod_twist}

\subsection{Ising cluster ensemble}\label{sec:5}

We now return to the Ising model and formulate the chamber ansatz from the introduction in cluster-ensemble language. 
   
\begin{definition}[Ising cluster ensemble map]\label{def:IsingClusterEnsemble}
Let $G$ be a reduced cactus graph. We denote by $q_{G}:\mathcal C_G^{>0} \rightarrow \mathcal I_G^{>0}$ the map defined as in \cref{fig:cluster_ensemble}, where
\begin{equation}\label{eq:c_and_s_from_A}
 c_e := \sqrt{\frac{C_uC_v}{C_uC_v+C_fC_g}} = \frac{\sqrt{A_u A_v}}{A_e} \quad \text{and} \quad s_e := \sqrt{\frac{C_fC_g}{C_uC_v+C_fC_g}}= \frac{\sqrt{A_f A_g}}{A_e}.   
\end{equation}
We call this map the \emph{Ising cluster ensemble map}.
\end{definition}

    \begin{figure}[hb]
    \centering
    \begin{tikzpicture}[scale=0.6]
        \begin{scope}[shift={(-4,0)}]
      \def\r{2};
            \fill[black!5] (0,0) circle (1.05*\r cm);
            \coordinate[nvert, label = right: $C_v$] (n1) at (0:\r);
            \coordinate[nvert, label = left: $C_u$] (n2) at (180:\r);
            
            \coordinate[label=center: $C_f$] (no) at (90:2);
            \coordinate[label = center: $C_g$] (no) at (-90:2);
            
            \draw[] (n1) -- node[above]{$e$} (n2);
        \end{scope}
        \node at (0.5,0) {$\longrightarrow$};	
         \begin{scope}[shift={(5,0)}]
            \def\r{2};
            \fill[black!5] (0,0) circle (1.05*\r cm);
            \coordinate[wvert] (w1) at (225:0.5*\r);
            \coordinate[wvert] (w2) at (45:0.5*\r);
            \coordinate[bvert] (b1) at (135:0.5*\r);
            \coordinate[bvert] (b2) at (315:0.5*\r);
            
            \draw[] (w1) -- node[left]{$s_e$} (b1) --node[above]{$c_e$} (w2) --node[right]{$s_e$} (b2) --node[below]{$c_e$} (w1)
            (w1) -- (225:\r)
            (w2) -- (45:\r)
            (b1) -- (135:\r)
            (b2) -- (315:\r)
            ;
        \end{scope}
    \end{tikzpicture}
    \caption{The cluster ensemble map for the Ising model. The edges with no weight indicated on the right hand side have weight $1$.}
    \label{fig:cluster_ensemble}
\end{figure}

The following is a straightforward verification.

    \begin{proposition}
    If $G \to G'$ is a Y-$\Delta$ move, the following diagram commutes:
        \[
\begin{tikzcd}
\mathcal{C}_{G}^{>0}
  \arrow[r, "\sim"]
  \arrow[d, "q_{G}"']
& 
\mathcal{C}_{G'}^{>0}
  \arrow[d, "q_{G'}"']
\\
\mathcal{I}_{G}^{>0}
  \arrow[r, "\sim"]
&
\mathcal{I}_{G'}^{>0}
\end{tikzcd}.
\]
 \end{proposition}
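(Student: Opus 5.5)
We will check commutativity locally, since a Y-$\Delta$ move only alters the edges and variables inside the disk of \cref{fig:yd_C}. Outside that disk both $q_G$ and $q_{G'}$ use the same $C$-values, because the positive Kashaev equation changes only $C_{000}\mapsto C_{111}$. So it suffices to show the following. If $(a,b,c)$ are the parameters $\exp(2J_e)$ on the three edges of the Y computed from $C$ through \eqref{eq:c_and_s_from_A}, and $(A,B,C)$ are the parameters on the triangle computed from the transformed $C$-network, then $(A,B,C)$ are given by the Ising Y-$\Delta$ formulas of \cref{figyd}.

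First we rewrite $q_G$ in terms of $x_e:=\exp(2J_e)$. Since $c_e=\tanh(2J_e)$ and $s_e=\operatorname{sech}(2J_e)$, we have $x_e=(1+c_e)/s_e$. With $P=C_uC_v$ and $Q=C_fC_g$ this becomes
\[
x_e=\frac{\sqrt{P+Q}+\sqrt P}{\sqrt Q}.
\]
A cleaner quantity is $\sinh(2J_e)=c_e/s_e=\sqrt{P/Q}$. We therefore plan to express everything through the ratios $t_e:=\sinh(2J_e)=\sqrt{C_uC_v/(C_fC_g)}$.

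On the Y side, let $C_{000}$ be the central vertex, $C_{100},C_{010},C_{001}$ the three faces and $C_{011},C_{101},C_{110}$ the outer vertices. For example, the edge from the center to the vertex $C_{011}$ has
\[
t^2=\frac{C_{000}C_{011}}{C_{010}C_{001}}.
\]
On the $\Delta$ side, the triangle edge joining $C_{101}$ and $C_{110}$ borders the faces $C_{111}$ and $C_{100}$, so
\[
T^2=\frac{C_{101}C_{110}}{C_{111}C_{100}}.
\]

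The Ising Y-$\Delta$ relation can be restated in the standard $\sinh$ form, which is equivalent to the formulas of \cref{figyd}. Since $q_G$ and $q_{G'}$ involve $C$-values only through $t$ and $T$, the rescaling torus acts trivially on both sides. We may therefore normalize $C_{000}=1$ and $C_{100}=C_{010}=C_{001}=1$ where convenient. Plugging in the Kashaev expression for $C_{111}$ then reduces the check to a polynomial identity in the square roots
\[
\sqrt{(C_{000}C_{011}+C_{010}C_{001})(C_{000}C_{101}+C_{100}C_{001})(C_{000}C_{110}+C_{100}C_{010})}.
\]
Note that the three factors under this square root are exactly the $C_{000}A_e^2$ for the three Y edges, by \eqref{eq:A_in_terms_of_C}.

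An alternative route avoids most of the algebra. By \cref{prop:C_to_A_yd} and Kenyon--Pemantle, the Y-$\Delta$ move on $C$ is induced by a sequence of spider moves on $A_{G^\square}$. By the Kenyon--Pemantle proposition on the dimer embedding, the Ising move is induced by the same sequence of moves on $\mathcal X_{G^\square}$. So it suffices to check that $q_G$, viewed as composed with $\mathcal I_G\hookrightarrow\mathcal X_{G^\square}$, agrees with the restriction of the cluster ensemble map $p_{G^\square}$, up to gauge and the torus action, on the image of $\mathcal C_G$. The spider-move compatibility of $p$ then gives the result.

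The main obstacle is this gauge comparison, or equivalently the direct algebraic identity for the $C_{111}$ substitution. The Muller--Speyer weights $1/(A_fA_g)$ are not literally $s_e,c_e$: a boundary torus action must be absorbed, and the paper notes that $p$ is not color-change compatible. So we expect to fall back on the direct computation. We would carry it out edge by edge, using the $S_3$ symmetry of the Kashaev formula to handle only one edge, and verify the identity after squaring both sides.
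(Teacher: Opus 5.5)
Your main route is correct and is the direct local ``straightforward verification'' the paper intends; in fact it closes without any normalization or squaring. Let $t_i=\sinh(2J_e)=\sqrt{C_uC_v/(C_fC_g)}$ on the $i$-th Y edge, and let $T_i$ be the same quantity on the opposite $\Delta$ edge. Then every product $t_i^2T_i^2$ equals $C_{000}C_{011}C_{101}C_{110}/(C_{100}C_{010}C_{001}C_{111})$, and setting it equal to the squared star-triangle constant $k^{-2}=t_1^2t_2^2t_3^2/\bigl(2+t_1^2+t_2^2+t_3^2+2\sqrt{(1+t_1^2)(1+t_2^2)(1+t_3^2)}\bigr)$ is literally the positive Kashaev formula for $C_{111}$. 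Minor points: the factors under Kashaev's square root are $A_e^2$, not $C_{000}A_e^2$; your four-variable normalization would need the local torus $C_{ijk}\mapsto\mu x^iy^jz^kC_{ijk}$ (which preserves the Kashaev relation and all $t_i,T_i$), not just global rescaling; and the alternative route you abandoned does go through with the modified map $\hat p_{G^\square}$ of \cref{sec:mod_twist} in place of $p_{G^\square}$, since it agrees with $q_G$ exactly after a gauge transformation (\cref{lem:modified_twist_equals_qG}) and is compatible with moves.
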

 
Gluing the maps $q_G$ as $G$ varies over all reduced graphs with $\pi_G=\pi$, we get 
        \[
        q_\pi: \mathcal C_\pi^{>0} \rightarrow \mathcal I_\pi^{>0}.
        \]

\subsection{Ising twist map}    \label{sec:mod_twist}
Let $\mathscr{M}$ be a positroid and let $\Gamma$ be a reduced plabic graph with $\mathscr{M}_\Gamma=\mathscr{M}$. We also define a modification
\[
\hat p_{\mathscr{M}}: \mathcal A_{\mathscr{M}}^{>0} \rightarrow \mathcal X_{\mathscr{M}}^{>0}.
\]
of the cluster ensemble map as follows. Let $A \in \mathcal A_{\Gamma}^{>0}$ and for an edge $e=bw$ with incident faces $f$ and $g$, let
\[
\wt(e) := \begin{cases}1/{A_{f} A_{g}},  &\text{if $e$ is an internal edge},\\
1/{\sqrt{A_{f} A_{g}}},  &\text{if $e$ is a boundary edge},
\end{cases}
\]
and define $\hat p_\Gamma(A):=[\wt]$. If $\Gamma \to \Gamma'$ is a move, then the following diagram commutes:
\[
\begin{tikzcd}
\mathcal{A}_{\Gamma}^{>0}
  \arrow[r, "\sim"]
  \arrow[d, "\hat p_{\Gamma}"']
& 
\mathcal{A}_{\Gamma'}^{>0}
  \arrow[d, "\hat p_{\Gamma'}"']
\\
\mathcal{X}_{\Gamma}^{>0}
  \arrow[r, "\sim"]
&
\mathcal{X}_{\Gamma'}^{>0}
\end{tikzcd}.
\]
Thus, the maps $(\hat p_\Gamma)_{\Gamma: \mathscr{M}_\Gamma = \mathscr{M}}$ glue together to give $\hat p_{\mathscr{M}}$.

\begin{figure}[ht]
    \centering
    \scalebox{0.9}{
    \begin{tikzpicture}
            \begin{scope}[shift={(-1,0)}]
                \def\r{2};
                \fill[black!5] (0,0) circle (1*\r cm);
                      \draw[dashed, gray] (0,0) circle (\r cm);
                \coordinate[wvert] (w1) at (225:0.5*\r);
                \coordinate[wvert] (w2) at (45:0.5*\r);
                \coordinate[bvert] (b1) at (135:0.5*\r);
                \coordinate[bvert] (b2) at (315:0.5*\r);
    
                \coordinate[] (b1') at (225:\r);
                \coordinate[] (b2') at (45:\r);
                \coordinate[] (w1') at (135:\r);
                \coordinate[] (w2') at (315:\r);
    
                \draw[] (w1) --  (b1) -- (w2) -- (b2) -- (w1)
                (w1) --  (b1')
                (w2) -- (b2')
                (b1) -- (w1')
                (b2) --  (w2');
    
                \node at (0,0) {$A_{24}$};
                \node at (0:0.8*\r) {$A_{14}$};
                \node at (90:0.8*\r) {$A_{34}$};
                \node at (180:0.8*\r) {$A_{23}$};
                \node at (270:0.8*\r) {$A_{12}$};
    
                \node at (0,-2.5) {(a)};
                \coordinate[ label = below left: $u_1^\partial$] (b1') at (225:\r);
                \coordinate[ label = above right: $u_3^\partial$] (b2') at (45:\r);
                \coordinate[ label = above  left: $u_2^\partial$] (w1') at (135:\r);
                \coordinate[ label = below right: $u_4^\partial$] (w2') at (315:\r);
            \end{scope}

        \begin{scope}[shift={(5,0)}]
                \def\r{2};
                \fill[black!5] (0,0) circle (1*\r cm);
                      \draw[dashed, gray] (0,0) circle (\r cm);
                \coordinate[wvert] (w1) at (225:0.5*\r);
                \coordinate[wvert] (w2) at (45:0.5*\r);
                \coordinate[bvert] (b1) at (135:0.5*\r);
                \coordinate[bvert] (b2) at (315:0.5*\r);
    
                 \coordinate[ label = below left: $\frac{1}{\sqrt{A_{12}A_{23}}}$] (b1') at (225:\r);
                \coordinate[ label = above right: $\frac{1}{\sqrt{A_{14}A_{34}}}$] (b2') at (45:\r);
                \coordinate[ label = above left:$\frac{1}{\sqrt{A_{23}A_{34}}}$] (w1') at (135:\r);
                \coordinate[ label = below right: $\frac{1}{\sqrt{A_{12}A_{14}}}$] (w2') at (315:\r);

                \draw[] (w1) -- node[left]{$\frac{1}{A_{23} A_{24}}$} 
                        (b1) -- node[above]{$\frac{1}{A_{24} A_{34}}$} 
                        (w2) -- node[right]{$\frac{1}{A_{14} A_{24}}$} 
                        (b2) -- node[below]{$\frac{1}{A_{12} A_{24}}$} 
                        (w1)
                        (w1) --  (b1')
                        (w2) -- (b2')
                        (b1) -- (w1')
                        (b2) --  (w2');
                \node at (0,-2.5) {(b)};
        \end{scope}

        \begin{scope}[shift={(11,0)}]
                \def\r{2};
                \fill[black!5] (0,0) circle (1*\r cm);
                    \draw[dashed, gray] (0,0) circle (\r cm);
                \coordinate[wvert] (w1) at (225:0.5*\r);
                \coordinate[wvert] (w2) at (45:0.5*\r);
                \coordinate[bvert] (b1) at (135:0.5*\r);
                \coordinate[bvert] (b2) at (315:0.5*\r);
    
                \coordinate[label = below left: $1$] (b1') at (225:\r);
                \coordinate[label = above right: $1$] (b2') at (45:\r);
                \coordinate[label = above left:$1$] (w1') at (135:\r);
                \coordinate[label = below right: $1$] (w2') at (315:\r);

                \draw[] (w1) -- node[left]{$\frac{\sqrt{A_{12} A_{34}}}{A_{24}}$}              (b1) --node[above]{$\frac{\sqrt{A_{14} A_{23}}}{A_{24}}$} 
                        (w2) --node[right]{$\frac{\sqrt{A_{12} A_{34}}}{A_{24}}$} 
                        (b2) --node[below]{$\frac{\sqrt{A_{14} A_{23}}}{A_{24}}$}
                        (w1)
                        (w1) -- (b1')
                        (w2) -- (b2')
                        (b1) -- (w1')
                        (b2) -- (w2');
                
                \node at (0,-2.5) {(c)};
        \end{scope}
        \end{tikzpicture}}
    \caption{$A \in \mathcal A_{G^\square}^{>0}$ (a), $\hat p_{G^\square}(A)$ (b) and $\hat p_{G^\square}(A)$ after a gauge transformation (c). }
    \label{fig:comm_Ising_cluster_ensemble}
\end{figure}

Since we have modified $p_\mathscr{M}$, we also need to modify $\rt$ to have a commutative diagram as in \cref{thm:Muller_Speyer}.

\begin{definition}[The Ising twist map]
We define the map
\[
\hat \tau_\mathscr{M} : \Pi_\mathscr{M}^{>0} \rightarrow \Pi_\mathscr{M}^{>0}, \qquad
\hat{\tau}_{\mathscr{M}}(V) :=
\left(
\sqrt{\frac{\Delta_{I_1}(V)}{\Delta_{I_2}(V)}},
\,\dots,\,
\sqrt{\frac{\Delta_{I_{2n}}(V)}{\Delta_{I_1}(V)}}
\right)\cdot \rt(V)
\]
where $\bm I= (I_1,\dots,I_{2n})$ is the Grassmann necklace of the positroid $\mathscr{M}$. We call this map the \emph{Ising twist map}.
\end{definition}

\begin{proposition}\label{prop:modified_twist_diagram}
Let $\Gamma$ be a reduced plabic graph with $\mathscr{M}_\Gamma=\mathscr{M}$. The following diagrams commute:
    \[
\begin{tikzcd}
\mathcal A_{\Gamma}^{>0} 
  \arrow[r, "\hat p_{\Gamma}"] 
& 
\mathcal X_{\Gamma}^{>0} 
  \arrow[d, "\operatorname{Meas}_{\Gamma}"] 
\\
\Pi_{\mathscr{M}}^{>0}
  \arrow[u, "\bm F_{\Gamma}"]   \arrow[r,  swap, "{\hat \tau_\mathscr{M}}"]
& 
\Pi_{\mathscr{M}}^{>0}
\end{tikzcd}, \qquad \begin{tikzcd}
\mathcal A_{\mathscr{M}}^{>0} 
  \arrow[r, "\hat p_{\mathscr{M}}"] 
& 
\mathcal X_{\mathscr{M}}^{>0} 
  \arrow[d, "\operatorname{Meas}_{\mathscr{M}}"] 
\\
\Pi_{\mathscr{M}}^{>0}
  \arrow[u, "\bm F_{\mathscr{M}}"]   \arrow[r,  swap, "{\hat \tau_\mathscr{M}}"]
& 
\Pi_{\mathscr{M}}^{>0}
\end{tikzcd}.
\]
\end{proposition}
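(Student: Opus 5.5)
The plan is to reduce everything to Muller--Speyer's \cref{thm:Muller_Speyer} by comparing $\hat p_\Gamma$ with $p_\Gamma$ through the torus actions. The second diagram follows from the first, because $\hat p_\Gamma$, $\bm F_\Gamma$ and $\operatorname{Meas}_\Gamma$ are compatible with moves. So it suffices to treat a fixed reduced plabic graph $\Gamma$.

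\textbf{Step 1: compare $\hat p_\Gamma$ with $p_\Gamma$.} The two maps agree on internal edges and differ only on boundary edges. Let $u_i^\partial$ have boundary edge $e$, and write $f^\partial_{i-1,i}, f^\partial_{i,i+1}$ for the two boundary faces adjacent to $e$.
\begin{itemize}
\item If the internal endpoint of $e$ is white, then $p_\Gamma$ gives $\wt(e)=1/A_{f^\partial_{i-1,i}}$ and $\hat p_\Gamma$ gives $\wt(e)=1/\sqrt{A_{f^\partial_{i-1,i}}A_{f^\partial_{i,i+1}}}$. The ratio $\hat\wt/\wt$ is $\sqrt{A_{f^\partial_{i-1,i}}/A_{f^\partial_{i,i+1}}}$.
\item If the internal endpoint is black, the ratio is the inverse of this.
\end{itemize}
Comparing with the definition of the $\Rpos^{n}$-action on $\mathcal X_\Gamma^{>0}$, we get
\[
\hat p_\Gamma(A)=\bm\mu(A)\cdot p_\Gamma(A),\qquad \mu_i:=\sqrt{A_{f^\partial_{i-1,i}}/A_{f^\partial_{i,i+1}}}.
\]
Both the white and the black case produce the same $\mu_i$, by the sign convention ($\lambda_i$ versus $\lambda_i^{-1}$). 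This consistency is the key check.

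\textbf{Step 2: identify the torus element in terms of $V$.} Set $A=\bm F_\Gamma(V)$. Then $A_{f^\partial_{i-1,i}}=\Delta_{I_i}(V)$ by the definition of the Grassmann necklace, and $A_{f^\partial_{i,i+1}}=\Delta_{I_{i+1}}(V)$. So $\mu_i=\sqrt{\Delta_{I_i}(V)/\Delta_{I_{i+1}}(V)}$, which is exactly the $i$-th entry of the torus element in the definition of $\hat\tau_{\mathscr M}$. Because Plücker coordinates are projective, rescaling $V$'s coordinates leaves each $\mu_i$ unchanged, so the $\mu_i$ are well defined.

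\textbf{Step 3: conclude.} We have
\[
\operatorname{Meas}_\Gamma(\hat p_\Gamma(\bm F_\Gamma V))=\operatorname{Meas}_\Gamma(\bm\mu\cdot p_\Gamma(\bm F_\Gamma V))=\bm\mu\cdot\operatorname{Meas}_\Gamma(p_\Gamma\bm F_\Gamma V).
\]
The second equality is the equivariance of $\operatorname{Meas}_\Gamma$. By \cref{thm:Muller_Speyer}, $\operatorname{Meas}_\Gamma(p_\Gamma\bm F_\Gamma V)=\rt(V)$, since $\rt$ is inverse to $\lt$ and $\operatorname{Meas}^{-1}=p\circ\bm F\circ\lt$. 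Hence the composite equals $\bm\mu\cdot\rt(V)=\hat\tau_{\mathscr M}(V)$.

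\textbf{Main obstacle.} The only delicate point is Step 1: checking that the action's convention (which boundary face goes with a white or black neighbour, and $\lambda_i$ versus $\lambda_i^{-1}$) yields the same $\mu_i$ in both cases and matches the necklace indexing. If the direction turns out reversed, I would compensate using \cref{lem:MS}(1) and the identity $\Delta_{I_i}(\rt V)=\Delta_{I_i}(V)^{-1}$ from \cref{lem:MS}(2). I would verify the conventions against \cref{ex:MS_twist_rotated} and \cref{fig:comm_Ising_cluster_ensemble}.
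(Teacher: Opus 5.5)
Your proof is correct and follows essentially the same route as the paper. Both arguments use the Muller--Speyer identity $\operatorname{Meas}_\Gamma\circ p_\Gamma\circ\bm F_\Gamma=\rt$ and the $\Rpos^{n}$-equivariance of $\operatorname{Meas}_\Gamma$, and both check on boundary edges that $\hat p_\Gamma\circ\bm F_\Gamma=\bm\lambda\cdot(p_\Gamma\circ\bm F_\Gamma)$ with $\lambda_i=\sqrt{\Delta_{I_i}(V)/\Delta_{I_{i+1}}(V)}$; the only cosmetic difference is that you push forward through $\operatorname{Meas}_\Gamma$, whereas the paper pulls back through $\operatorname{Meas}_\Gamma^{-1}$.
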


\begin{proof}
    We show commutativity of the first diagram by showing that $\operatorname{Meas}_{\Gamma}^{-1} \circ \hat \tau_\mathscr{M} =  \hat p_\Gamma \circ \bm F_\Gamma$; as usual, the second diagram is obtained by gluing. Let $\bm \lambda = (\lambda_1,\dots,\lambda_{2n}) \in \Rpos^{2n}$ be given by \(
\lambda_i := 
\sqrt{\frac{\Delta_{I_{i}}(V)}{\Delta_{I_{i+1}}(V)}}.
\) We have by definition of $\hat \tau_\mathscr{M}$ and equivariance that
    \begin{align*}
    \operatorname{Meas}_{\Gamma}^{-1} \circ \hat \tau_\mathscr{M} (V) &=  \operatorname{Meas}_{\Gamma}^{-1}(\bm \lambda \cdot  \rt(V))\\
    &=\bm \lambda \cdot \operatorname{Meas}_{\Gamma}^{-1}(\rt(V))\\
    &= \bm \lambda \cdot (p_\Gamma \circ \bm F_\Gamma \circ \lt) \circ \rt(V)\\
    &=\bm \lambda \cdot p_\Gamma \circ \bm F_\Gamma(V).
    \end{align*}
Therefore, it suffices to show that $ \hat p_\Gamma \circ \bm F_\Gamma(V) = \bm \lambda \cdot p_\Gamma \circ \bm F_\Gamma(V)$. Clearly this is true at internal edges. We have the following two cases for boundary edges:
\begin{enumerate}
    \item $e$ is incident to $u_i^\partial$ and a white internal vertex: $\bm \lambda \cdot p_\Gamma \circ \bm F_\Gamma$ assigns the weight \[ \sqrt{\frac{\Delta_{I_{i}}(V)}{\Delta_{I_{i+1}}(V)}} \frac{1}{\Delta_{I_i}(V)} = \frac{1}{\sqrt{\Delta_{I_i}(V) \Delta_{I_{i+1}}(V)}}.\]
    \item $e$ is incident to $u_i^\partial$ and a black internal vertex: $\bm \lambda \cdot p_\Gamma \circ \bm F_\Gamma$ assigns the weight 
    \[
    \left( \sqrt{\frac{\Delta_{I_{i}}(V)}{\Delta_{I_{i+1}}(V)}}\right)^{-1}
    \frac{1}{\Delta_{I_{i+1}}(V)} = \frac{1}{\sqrt{\Delta_{I_i}(V) \Delta_{I_{i+1}}(V)}}. \qedhere
    \]
\end{enumerate}
    
\end{proof}

Now we return to the Ising model and relate the Ising twist map with the Ising cluster ensemble map. 

\begin{lemma}\label{lem:modified_twist_equals_qG}
Let $G$ be a reduced cactus graph with $\pi_G=\pi$. The following diagrams commute:
\[
\begin{tikzcd}
\mathcal C_{G}^{>0} 
  \arrow[r, "q_G"] \arrow[d, hook] 
& 
\mathcal I_{G}^{>0} 
  \arrow[d, hook] 
\\
\mathcal A_{G^\square}^{>0}
    \arrow[r,   "\hat p_{G^\square}"]
& 
\mathcal X_{G^\square}^{>0}
  \end{tikzcd}, \qquad \begin{tikzcd}
\mathcal C_{\pi}^{>0} 
  \arrow[r, "q_\pi"] \arrow[d, hook] 
& 
\mathcal I_{\pi}^{>0} 
  \arrow[d, hook] 
\\
\mathcal A_{\mathscr{M}_\pi}^{>0}
    \arrow[r,   "\hat p_{\mathscr{M}_\pi}"]
& 
\mathcal X_{\mathscr{M}_\pi}^{>0}
  \end{tikzcd}.
\]
\end{lemma}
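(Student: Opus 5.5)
The first diagram is local, so I would check it one square face at a time by explicit gauge transformation. The second diagram should then follow by gluing, as in the earlier gluing arguments.

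Fix $C \in \mathcal C_G^{>0}$ and let $A$ be its image in $\mathcal A_{G^\square}^{>0}$. This means $A_x = C_x$ for $x \in V(G)\sqcup F(G)$, and $A_e = \sqrt{C_uC_v+C_fC_g}$ for every square face $e$. Every edge of $G^\square$ lies on exactly one square face, either as one of its four sides or as one of the four "legs" leaving its corners. Each leg joins two adjacent square faces, or a square face and a boundary vertex. The faces of $G^\square$ are:
\begin{itemize}[label={}]
\item the square faces;
\item faces indexed by vertices $u$ of $G$;
\item faces indexed by faces $f$ of $G$.
\end{itemize}
The four sides of the square at $e=uv$ border the square face $e$ and, respectively, $u$, $g$, $v$, $f$ (in some cyclic order). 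Under $\hat p_{G^\square}$ the sides therefore receive the weights $1/(A_eA_u)$, $1/(A_eA_f)$, $1/(A_eA_v)$, $1/(A_eA_g)$. The legs are internal edges separating a vertex-face from a face-face, so they receive weight $1/(A_xA_y)$ with $x\in V(G)$ and $y\in F(G)$. At the boundary, a leg separates the two faces adjacent to $u_i^\partial$, and it receives $1/\sqrt{A_xA_y}$.

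Next I would choose a gauge that puts all this into the normal form of \cref{fig:cluster_ensemble}, exactly as in the $n=2$ computation of \cref{fig:comm_Ising_cluster_ensemble}(b)$\to$(c). At each of the four vertices of the square at $e$ (each is a corner where a vertex-face and a face-face of $G$ meet), I would take the gauge factor $g=\sqrt{A_xA_y}^{\pm1}$, with the exponent sign determined by the vertex colour. The effect on each side is then to multiply $1/(A_eA_u)$ by $\sqrt{A_uA_f}\sqrt{A_uA_g}$, or some such product. This gives $\sqrt{A_fA_g}/A_e = s_e$ on the sides bordering $u$ and $v$, and $\sqrt{A_uA_v}/A_e=c_e$ on the sides bordering $f$ and $g$; both identities are \eqref{eq:c_and_s_from_A}. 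Meanwhile, an internal leg joining corners of two adjacent squares gets $\frac{1}{A_xA_y}\cdot\sqrt{A_xA_y}\cdot\sqrt{A_xA_y}=1$, because both of its endpoints lie at corners between the same pair $x,y$. A boundary leg gets $\frac{1}{\sqrt{A_xA_y}}\sqrt{A_xA_y}=1$, and the boundary vertices carry gauge factor $1$ as required.

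The main obstacle is the bookkeeping of colours and positions. Two things have to be checked:
\begin{itemize}[label={}]
\item the white/black pattern fixed by the orientation of $G^\square$ makes the exponents consistent, so that the gauge factor is $g$ at one end and $g^{-1}$ at the other end of each leg;
\item the sides weighted $s_e$ versus $c_e$ agree with \cref{fig:medial_graph_to_G_square}, i.e.\ the $s_e$-sides are exactly those bordering the vertex-faces $u,v$.
\end{itemize}
Once this is verified, the resulting weight is precisely the image of $q_G(C)$ under $\mathcal I_G^{>0}\hookrightarrow\mathcal X_{G^\square}^{>0}$, because $s_e,c_e$ are the values given by \eqref{eq:c_and_s_from_A}. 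Note that $s_e^2+c_e^2=1$ automatically, so these are legitimate sech/tanh values.

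For the second diagram, I would combine the first diagram for each $G$ with the compatibilities already established: $q_G$ with Y-$\Delta$ moves, $\mathcal C_G\hookrightarrow\mathcal A_{G^\square}$ with moves (\cref{prop:C_to_A_yd}), $\hat p_\Gamma$ with moves, and the embedding of Ising models in dimer models (Kenyon--Pemantle). Together these give commutativity on the glued spaces.
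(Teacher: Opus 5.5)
Your proposal is correct and is essentially the paper's argument: the paper proves the first diagram by exhibiting exactly this gauge transformation on a single square (\cref{fig:comm_Ising_cluster_ensemble}(b)$\to$(c)), and it obtains the second diagram by gluing. The two checks you flag are immediate. Taking $g(w)=\sqrt{A_xA_y}$ and $g(b)=\sqrt{A_xA_y}^{-1}$ at every corner works for any colouring, because $G^\square$ is bipartite and so each leg has one black and one white endpoint. Likewise, the side bordering $u$ always receives $\sqrt{A_fA_g}/A_e=s_e$ whichever of its two corners is black, which matches the $s_e$-sides of \cref{fig:medial_graph_to_G_square}.
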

\begin{proof}
    See \cref{fig:comm_Ising_cluster_ensemble} for the proof of commutativity of the first diagram; the second is obtained by gluing.
\end{proof}

\subsection{Group action and inverse Ising twist} \label{sec:torus_action}

We have $\dim \mathcal I_G^{>0}=|E(G)|$ and $\dim \mathcal C_G^{>0}=|V(G)|+|F(G)|-1$. Applying Euler's formula to the cell decomposition of $\disk$ given by $G$ together with the $n$ intervals that $(b_i^\partial)_{i=1}^n$ divide $\partial \disk$ into, we have
\(
|V(G)| - (|E(G)|+n) + |F(G)|=1,
\) so 
\[
\dim \mathcal C_G^{>0}-\dim \mathcal I_G^{>0}=n.
\] 
Hence, the map $\hat \tau_{\mathscr{M}_\pi}$ cannot be a bijection. 
We will see that there is an action of an $n$-dimensional group and $\hat \tau_{\mathscr{M}_\pi}$ becomes a bijection upon taking the quotient.

Given $\pi \in P_n$, consider the subgroup 
\[ 
T^{>0}_\pi := \Big\{ \bm \lambda=(\lambda_1,\dots,\lambda_{2n}):  \lambda_{\bar{i}} = \lambda_i^{-1} \text{ for all $i \in [2n]$} \Big\} \cong \Rpos^n
\]
of $\Rpos^{2n}$ with the induced action.

\begin{lemma}
$T_\pi^{>0}$ stabilizes $\mathcal C_\pi^{>0} \subset \mathcal A_{\mathscr{M}_\pi}^{>0}$.
\end{lemma}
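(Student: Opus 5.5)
Fix a reduced cactus graph $G$ with $\pi_G=\pi$ and work at the level of $G^\square$. The image of $\mathcal C_G^{>0}\hookrightarrow\mathcal A_{G^\square}^{>0}$ is cut out, by \eqref{eq:A_in_terms_of_C}, by one equation per edge $e=uv$ of $G$:
\[
A_e^2=A_uA_v+A_fA_g ,
\]
where $f,g$ are the faces adjacent to $e$. The action of $\bm\lambda\in\Rpos^{2n}$ multiplies $A_x$ by $\prod_{i\in\bm T(x)}\lambda_i$. So it suffices to show, for each $\bm\lambda\in T^{>0}_\pi$ and each such edge, that the three monomials $A_e^2$, $A_uA_v$ and $A_fA_g$ are rescaled by the same factor. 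Concretely, we must check that the multisets $\bm T(e)\sqcup\bm T(e)$, $\bm T(u)\sqcup\bm T(v)$ and $\bm T(f)\sqcup\bm T(g)$ have the same image under the character $\prod\lambda_i$. Since the identification of $\mathcal C_\pi^{>0}$ is glued from the $\mathcal C_G^{>0}$ and the $\Rpos^{2n}$-action commutes with moves, the statement for all $G$ then follows from the statement for one $G$. Alternatively, one can just check it for each $G$ separately.

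Next use the local structure of the square face. By the description of $\mathcal S^\square_\pi$, the square face has label $\bm T(e)=Si\bar i$, where $\gamma_{\{i,\bar i\}}$ and $\gamma_{\{j,\bar j\}}$ are the two crossing strands. The four neighbouring faces $u,v,f,g$ then carry labels of the form $Sij$, $S\bar i\bar j$, $Si\bar j$, $S\bar i j$, in some assignment. This is exactly the configuration of the spider move in \cref{fig:moves_labels_plabic}: the square face is $Si\bar i$ (with the opposite label $Sj\bar j$ after the move), and the four surrounding faces are obtained by replacing the pair from the two strands. I would verify the precise assignment by tracing the four strands of $G^\square$ through the square, using the rule that a face's label records the strands having it on their left. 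The expected outcome is that opposite faces receive complementary choices: $\{u,v\}$ get $\{Sij, S\bar i\bar j\}$ and $\{f,g\}$ get $\{Si\bar j, S\bar i j\}$, or the reverse.

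Granting this, the three characters are
\[
\lambda_S^2\lambda_i^2\lambda_{\bar i}^2,\qquad \lambda_S^2\lambda_i\lambda_{\bar i}\lambda_j\lambda_{\bar j},\qquad \lambda_S^2\lambda_i\lambda_{\bar i}\lambda_j\lambda_{\bar j}.
\]
The last two agree identically. Under $\lambda_{\bar i}=\lambda_i^{-1}$ and $\lambda_{\bar j}=\lambda_j^{-1}$ all three reduce to $\lambda_S^2$. Hence $\bm\lambda\cdot A$ again satisfies \eqref{eq:A_in_terms_of_C}, and therefore lies in the image of $\mathcal C_G^{>0}$. In particular, $T^{>0}_\pi$ acts on $\mathcal C_G^{>0}$ by $C_x\mapsto\bigl(\prod_{i\in\bm T(x)}\lambda_i\bigr)C_x$.

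The main obstacle is the combinatorial step: justifying that the four faces around a square face have labels $S$ plus one element from each of $\{i,\bar i\}$ and $\{j,\bar j\}$, paired so that each opposite pair uses complementary elements. I would derive it from the $2{:}1$ correspondence between strands of $G$ and strands of $G^\square$. The two $G^\square$-strands lying over $\gamma_{\{i,\bar i\}}$ run in opposite directions, and each bounds the square on a different side. Crossing one $G$-strand therefore swaps $i$ for $\bar i$ in the label and leaves every other element unchanged.

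As a fallback, note that the two products need not be matched term by term. The relation $A_uA_v+A_fA_g=A_e^2$ is homogeneous for the full torus $\Rpos^{2n}$ by the three-term Plücker relation, which gives $A_uA_v$ and $A_fA_g$ the same character. It then remains only to compare the character of $A_e^2$, $\lambda_S^2\lambda_i^2\lambda_{\bar i}^2$, with that of $A_{Si\bar i}A_{Sj\bar j}$, namely $\lambda_S^2\lambda_i\lambda_{\bar i}\lambda_j\lambda_{\bar j}$. These coincide precisely on $T^{>0}_\pi$.
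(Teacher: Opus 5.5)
Your proof is correct, and it supplies the argument that the paper's one-line proof only gestures at. The paper justifies the lemma by claiming that every non-square label $\bm T(f)$ contains both of $i,\bar i$ or neither. Read literally, this looks like a slip. Non-square labels contain \emph{exactly one} element of each pair $\{k,\bar k\}$: for $n=2$ the face labelled $12$ has $\bar 1=3\notin\{1,2\}$, and in the $n=4$ example $C_{b_0}=\Delta_{1347}$ with $\bar 1=6$. Were the claim true, $T_\pi^{>0}$ would act trivially on every $C$-variable. Then $q_G$ could not induce a bijection $\mathcal C_G^{>0}/T_\pi^{>0}\to\mathcal I_G^{>0}$, since $\dim\mathcal C_G^{>0}-\dim\mathcal I_G^{>0}=n$.

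What actually proves the lemma is the local fact you isolate. A square face carries $Si\bar i$, and its spider flip carries $Sj\bar j$. The four neighbours are $S$ together with one element of each of $\{i,\bar i\}$ and $\{j,\bar j\}$, with opposite neighbours complementary, exactly as in \cref{fig:paths}(a). Checking $T_\pi^{>0}$-homogeneity of each defining equation $A_e^2=A_uA_v+A_fA_g$ is therefore the right route. It also gives the explicit action $C_x\mapsto\bigl(\prod_{k\in\bm T(x)}\lambda_k\bigr)C_x$ that the later quotient relies on.

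Your fallback is the most economical version, since it needs only the square-label shape and the spider-move labelling rule. Two small corrections to its wording:
\begin{enumerate}
\item What is homogeneous for all of $\Rpos^{2n}$ is the exchange relation $A_{Si\bar i}A_{Sj\bar j}=A_uA_v+A_fA_g$, not $A_e^2=A_uA_v+A_fA_g$.
\item The characters of $A_e^2$ and $A_{Si\bar i}A_{Sj\bar j}$ agree whenever $\lambda_i\lambda_{\bar i}=\lambda_j\lambda_{\bar j}$, in particular on $T_\pi^{>0}$, rather than ``precisely'' on $T_\pi^{>0}$.
\end{enumerate}
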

\begin{proof}
    This follows from the fact that for any non-square face $f$ of $G^\square$, the set $\bm T(f)$ contains either both $i$ and $\bar i$ or neither.
\end{proof}

\begin{lemma}
    $q_\pi$ is $T^{>0}_\pi$-invariant.
\end{lemma}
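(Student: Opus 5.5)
The plan is to check invariance directly on a single graph $G$ with $\pi_G=\pi$ and then glue. Since $q_\pi$ is assembled from the maps $q_G$, which commute with Y-$\Delta$ moves, and the $T_\pi^{>0}$-action is compatible with moves (it is the restriction of the $\Rpos^{2n}$-action on $\mathcal A_{\mathscr M_\pi}^{>0}$), it suffices to show $q_G(\bm\lambda\cdot C)=q_G(C)$ for every $\bm\lambda\in T_\pi^{>0}$ and $C\in\mathcal C_G^{>0}$.

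First I will make the action on $C$-variables explicit. Under the embedding $\mathcal C_G^{>0}\hookrightarrow\mathcal A_{G^\square}^{>0}$, a vertex or face $x$ of $G$ corresponds to a non-square face of $G^\square$, and the action is $C_x\mapsto \bigl(\prod_{i\in\bm T(x)}\lambda_i\bigr)C_x$. By the previous lemma, $\bm T(x)$ is a union of pairs $\{k,\bar k\}$, together with the complement of such a union. Since $\lambda_k\lambda_{\bar k}=1$, each factor is $1$. Hence $T_\pi^{>0}$ acts \emph{trivially} on $\mathcal C_G^{>0}$ as a space of $C$-networks, and invariance of $q_G$ follows at once.

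I would double-check this against the statement of the previous lemma, which says "stabilizes" rather than "fixes pointwise". That wording suggests the intended action on $C$-networks may be nontrivial, for instance through the modified twist normalization. If so, the fallback is the formula \eqref{eq:c_and_s_from_A}, $c_e=\sqrt{A_uA_v}/A_e$ and $s_e=\sqrt{A_fA_g}/A_e$. Write the square-face label as $\bm T(e)=Si\bar i$, with $Sj\bar j$ its spider-move partner. The four neighbouring labels $\bm T(u),\bm T(v),\bm T(f),\bm T(g)$ are, up to ordering, $Sij$, $S\bar i\bar j$, $Si\bar j$ and $S\bar i j$.

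Under $\bm\lambda$, $A_e$ scales by $\lambda_S\lambda_i\lambda_{\bar i}=\lambda_S$, where $\lambda_S:=\prod_{s\in S}\lambda_s$. The product $A_uA_v$ scales by $\lambda_S^2\lambda_i\lambda_j\lambda_{\bar i}\lambda_{\bar j}=\lambda_S^2$, and similarly $A_fA_g$ scales by $\lambda_S^2$. Therefore both $c_e$ and $s_e$ are unchanged, and the equations \eqref{eq:A_in_terms_of_C} are preserved as well.

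The main obstacle is identifying the neighbouring labels of a square face. This should come from the local strand picture in \cref{fig:moves_labels_plabic} together with the $2:1$ strand correspondence between $G^\square$ and $G$: the two strands of $G^\square$ crossing at the square come from the same pair of $G$-strands, so each adjacent face differs from $Si\bar i$ by swapping one element of the pair. Once this is in place, the result follows by gluing over Y-$\Delta$ moves.
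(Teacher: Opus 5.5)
Your fallback argument is correct, and it is exactly the paper's proof. The paper simply compares the target labels of the $A$-variables in the numerator and denominator of \eqref{eq:c_and_s_from_A}: $A_e$ scales by $\lambda_S$, while $A_uA_v$ and $A_fA_g$ each scale by $\lambda_S^2$. One point you should state explicitly is that $u,v$ (and likewise $f,g$) lie on opposite sides of the square. That is why $\{\bm T(u),\bm T(v)\}$ is an opposite pair such as $\{Sij,S\bar i\bar j\}$. An adjacent pair such as $\{Sij,Si\bar j\}$ would instead scale by $\lambda_S^2\lambda_i^2$.

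Your primary argument, however, is wrong and should be discarded. It is not true that $\bm T(x)$ is a union of pairs $\{k,\bar k\}$ for a non-square face $x$. The two $G^\square$-strands $\alpha_k,\alpha_{\bar k}$ coming from one $G$-strand traverse the same edges in opposite directions and separate only around squares. So every non-square face lies to the left of exactly one of them, and $\bm T(x)$ contains exactly one of $k,\bar k$ for each pair. The $n=2$ example shows this: the non-square labels are $12,14,23,34$, while the pairs are $\{1,3\},\{2,4\}$. There $T_\pi^{>0}$ rescales the four $C$-variables by $\lambda_1\lambda_2,\ \lambda_1\lambda_2^{-1},\ \lambda_1^{-1}\lambda_2,\ \lambda_1^{-1}\lambda_2^{-1}$, which is not a global scalar.

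The action in fact has to be nontrivial. Since $\dim\mathcal C_G^{>0}-\dim\mathcal I_G^{>0}=n$, a trivial action would rule out $q_G\colon\mathcal C_G^{>0}/T_\pi^{>0}\to\mathcal I_G^{>0}$ being a bijection. Your two arguments also contradict each other, because the fallback uses that $\bm T(u)=Sij$ contains $i$ but not $\bar i$.

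The ``both or neither'' phrasing you borrowed does appear in the paper's proof of the stabilization lemma, but it is not correct as stated. Stabilization really follows from your scaling computation: $A_e^2$ and $C_uC_v+C_fC_g$ both scale by $\lambda_S^2$.
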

\begin{proof}
Compare the target face labels corresponding to the $A$ variables in the numerator and denominator of $c$ and $s$ in~\eqref{eq:c_and_s_from_A}.
\end{proof}

By \cref{lem:modified_twist_equals_qG} and \cref{prop:modified_twist_diagram}, the induced map $q_G: \mathcal C_G^{>0}/T_\pi^{>0} \rightarrow \mathcal I_G^{>0}$ sits in the commutative diagram:
\[
\begin{tikzcd}
\mathcal C_{G}^{>0} /T_{\pi}^{>0}
  \arrow[r, "q_G"] 
& \mathcal I_{G}^{>0} 
  \arrow[d, "\phi_n \circ \operatorname{Corr}_{G}"] 
\\
\Lambda_{\pi}^{>0}/T_\pi^{>0}
  \arrow[u, "\bm H_{G}"]   \arrow[r,  swap, "{\hat \tau_{{\mathscr{M}}_\pi}}"]
& 
\OGr^{\ge 0}_{n,2n} \cap \Pi_{\mathscr{M}_\pi}^{>0}
\end{tikzcd}
\]
and similarly, by gluing, for the induced map ${q}_\pi: \mathcal C_\pi^{>0}/T^{>0}_{\pi} \rightarrow \mathcal I_\pi^{>0} $. The goal of this section is to prove that the maps $q_G, q_\pi, \hat \tau_{\mathscr{M}_\pi}$ are bijections. We do this by constructing an inverse map $\hat \tau^{-1}_{\mathscr{M}_\pi}$ as a modification of the left twist.

Given $[\wt] \in \mathcal X_{G^\square}$, let $m_i([\wt])$ denote the alternating product of edge weights along the strand with target $u_i^\partial$ where the edge weight $\wt(bw)$ appears in the numerator if $bw$ is traversed by the strand from $b$ to $w$ and in the denominator otherwise.

\begin{lemma} \label{lem:zzpath_from_A}
Let $[\wt] = \hat p_\Gamma (A) $ for some $A \in \mathcal A_\Gamma^{>0}$. Then, \[m_i([\wt]) = \sqrt{\frac{A_{f_{i,i+1}^\partial}A_{f_{\bar i,\bar i+1}^\partial}}{A_{f_{ i-1, i}^\partial}A_{f_{\bar i-1,\bar i}^\partial}}}.\] 
\end{lemma}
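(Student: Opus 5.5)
The plan is the standard telescoping argument for zig-zag (strand) products under the cluster ensemble map, adapted to the modified weights of $\hat p_\Gamma$. First observe that $m_i([\wt])$ is well defined on gauge classes. At every internal vertex $v$ the strand enters along one edge and leaves along another. By the alternating rule, one of these two edges is traversed from $b$ to $w$ and the other from $w$ to $b$, so the gauge factor $g(v)^{\pm1}$ appears once in the numerator and once in the denominator. The boundary vertices have $g=1$. Hence we may compute $m_i$ directly with the representative $\wt$ defining $\hat p_\Gamma(A)$.

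Next I would set up notation along the strand $\gamma$ with target $u_i^\partial$. Since $\pi$ is an involution, its source is $u_{\bar i}^\partial$. Write its edges as $e_0,e_1,\dots,e_m$, where $e_0$ is the boundary edge at $u_{\bar i}^\partial$ and $e_m$ is the boundary edge at $u_i^\partial$. Let $L_k,R_k$ be the faces to the left and right of $e_k$. By the construction of $\Gamma^\times$ in \cref{fig:medial_graph_plabic}, the strand turns right at black vertices and left at white vertices (or the reverse, according to the convention), and colors alternate along $\gamma$. Consequently consecutive edges share alternately their left face and their right face. That is, $L_k=L_{k+1}$ at vertices of one color and $R_k=R_{k+1}$ at vertices of the other. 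Every internal edge contributes $(A_{L_k}A_{R_k})^{\mp1}$ with alternating exponent. So each face shared by two consecutive internal edges cancels, and the internal part of the product telescopes to a ratio involving only the faces adjacent to $e_0$ and $e_m$.

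Those end faces are exactly $f^\partial_{\bar i-1,\bar i},f^\partial_{\bar i,\bar i+1}$ for $e_0$ and $f^\partial_{i-1,i},f^\partial_{i,i+1}$ for $e_m$. For $p_\Gamma$ this bookkeeping gives the Muller--Speyer formula, in which only one boundary face survives at each end. For $\hat p_\Gamma$, the boundary edges instead carry the symmetric weight $1/\sqrt{A_fA_g}$, which contributes half of each adjacent boundary face. I would then combine this with the uncancelled internal factors at each end. The face shared with $e_1$ (respectively $e_{m-1}$) partially cancels, and the product at each end becomes $\sqrt{A_{f_{i,i+1}^\partial}/A_{f_{i-1,i}^\partial}}$ at the target and $\sqrt{A_{f_{\bar i,\bar i+1}^\partial}/A_{f_{\bar i-1,\bar i}^\partial}}$ at the source. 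A degenerate strand with $m=0$, i.e.\ a single boundary-to-boundary edge, should be checked separately, though it does not arise for $G^\square$.

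The main obstacle is the sign and orientation bookkeeping at the two ends. One must determine, according to whether the internal vertex adjacent to $u_i^\partial$ (resp.\ $u_{\bar i}^\partial$) is black or white, whether the boundary edge appears in the numerator or the denominator, and which of the two boundary faces is the one shared with the neighbouring internal edge. I would handle this by a four-way case split on the colors of the two end vertices. In each case I would verify that the exponents of the four boundary faces come out as $+\tfrac12$ for $f^\partial_{i,i+1},f^\partial_{\bar i,\bar i+1}$ and $-\tfrac12$ for $f^\partial_{i-1,i},f^\partial_{\bar i-1,\bar i}$. The symmetric choice $1/\sqrt{A_fA_g}$ in $\hat p_\Gamma$ is precisely what makes all four cases give the same answer. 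As a sanity check, I would confirm the formula on the $n=2$ example of \cref{fig:comm_Ising_cluster_ensemble}(b).
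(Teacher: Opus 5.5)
Your proposal is correct and is essentially the paper's proof: a telescoping cancellation along the strand, where the half-weights $1/\sqrt{A_fA_g}$ on the two boundary edges leave a square root of a ratio of two boundary faces at each end. The one real difference is the endpoint bookkeeping, and there your version is more complete. The paper computes only the case where $u_i^\partial$ and $u_{\bar i}^\partial$ are both adjacent to black vertices; it asserts that this color agreement holds by construction of $G^\square$ and gets the all-white case from color-change. Your end-by-end analysis shows that each end's contribution is independent of the adjacent color, so it also covers mixed-color strands. These do occur: in the paper's $n=4$ example, $\bar 1=6$, and $u_1^\partial$ is adjacent to a white vertex while $u_6^\partial$ is adjacent to a black one.
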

\begin{proof}
Suppose the edges in order along the strand with target $u_i^\partial$ are $e_1,\dots,e_r$ and suppose $f_j$ (resp. $g_j$) is the face on the left (resp. right) of $e_j$. Suppose $u_i^\partial$ is incident to a black internal vertex. By construction of $G^\square$, $u_{\bar i}^\partial$ is also incident to a black internal vertex. Moreover, we have 
    \[
    f_1 = f_{\bar i,\bar i+1}^\partial, g_1 = f_{\bar i-1,\bar i}^\partial, f_r=f_{i-1,i}^\partial, g_r=f_{i,i+1}^\partial, g_1=g_2, f_2=f_3,\dots, g_{r-1}=g_r.
    \]
    The alternating product simplifies after a telescopic cancellation to
    \begin{align*}
    m_i([\wt]) = \sqrt{A_{f_1}A_{g_1}} \frac{1}{A_{f_2}A_{g_2}} \cdots {A_{f_{r-1}}A_{g_{r-1}}} \frac{1}{\sqrt{A_{f_r}A_{g_r}}} 
    = \sqrt{ \frac{   A_{f_1} A_{g_r}  } { A_{g_1} A_{f_r} } }
    =\sqrt{\frac{A_{f_{i,i+1}^\partial}A_{f_{\bar i,\bar i+1}^\partial}}{A_{f_{ i-1, i}^\partial}A_{f_{\bar i-1,\bar i}^\partial}}}.
    \end{align*}
The case when $u_i^\partial$ is incident to a white internal vertex follows by a similar argument, or more easily by color-change equals spider moves at square faces and that $m_i([\wt])$ is invariant under moves.
\end{proof}

\begin{definition}[Inverse Ising twist map]
We define the \emph{inverse Ising twist map} $\hat \tau^{-1}_{\mathscr{M}_\pi}:\OGr^{\ge 0}_{n,2n} \cap \Pi_{\mathscr{M}_\pi}^{>0} \rightarrow \Pi_{\mathscr{M}_\pi}^{>0}/T_{\pi}^{>0}$ as follows:
\[
\hat \tau^{-1}_{\mathscr{M}_\pi}(V) := \bm \lambda \cdot {\lvec{\tau}_{\mathscr{M}_\pi}}(V),
\]
where $\bm \lambda \in \Rpos^{2n}$ is any collection of numbers satisfying 
\begin{equation} \label{eq:lambda_modified_lt}
    \lambda_i \lambda_{\bar i} =  \frac{\Delta_{I_{i+1}}(V)}{\Delta_{I_{i}}(V)}  \qquad \text{for all } i \in [2n],
\end{equation}
\end{definition}

Different choices of $\bm \lambda$ are related by the action of $T^{>0}_\pi$ and therefore the definition is independent of the choice.   

 \begin{remark}
    Let $G$ be a reduced graph with $\pi_G=\pi$, let $G^\square$ be the corresponding plabic graph and let $[\wt] = \operatorname{Meas}^{-1}_{G^\square}(V)$. Then, \eqref{eq:lambda_modified_lt} is also equal to $1/{m_i([\wt])}$, which can be proved by a similar computation as in the proof of \cref{lem:zzpath_from_A} applied to $p_\Gamma$ instead of $\hat p_\Gamma$ and using \cref{lem:MS}(2).  
 \end{remark}

\begin{figure}[ht]
    \centering
    \scalebox{0.6}{    
    \begin{tikzpicture}
        \begin{scope}[shift={(1,2)}, scale = 0.75]
                    \def\r{2};
                    \fill[black!5] (0,0) circle (2*\r cm);
                     \draw[dashed, gray] (0,0) circle (2*\r cm);
                    \coordinate[wvert] (w1) at (225:0.5*\r);
                    \coordinate[wvert] (w2) at (45:0.5*\r);
                    \coordinate[bvert] (b1) at (135:0.5*\r);
                    \coordinate[bvert] (b2) at (315:0.5*\r);
        
                    \coordinate[] (b1') at (225:\r);
                    \coordinate[] (b2') at (45:\r);
                    \coordinate[] (w1') at (135:\r);
                    \coordinate[] (w2') at (315:\r);
        
                    \draw[] (w1) --  (b1) -- (w2) -- (b2) -- (w1)
                            (w1) --  (b1')
                            (w2) -- (b2')
                            (b1) -- (w1')
                            (b2) --  (w2') ;
        
                    \node at (0,0) {$S i \bar i$};
                    \node at (0:0.8*\r) {$S \bar i \bar j$};
                    \node at (90:0.8*\r) {$S \bar i j$};
                    \node at (180:0.8*\r) {$Sij$};
                    \node at (270:0.8*\r) {$S i \bar j$};
        
                    \draw[red,<-,line width=\lw,rounded corners=\rc] (225:4) --  (220:3) --  (235:1) -- (-45:1-\ep) -- (35:1) -- (50:3) -- (45:4); 
                    \node at (225:4.5) {$u_{\bar j}^\partial$};
    
                \begin{scope}[rotate=90]
                    \draw[yellow!80!red,->,line width=\lw,rounded corners=\rc]
                    (225:4) --  (220:3) --  (235:1) -- (-45:1-\ep) -- (35:1) -- (50:3) -- (45:4); 
                    \node at (225:4.5) {$u_{\bar i}^\partial$};
                \end{scope}
    
                \begin{scope}[rotate=180]
                    \draw[blue,<-,line width=\lw,rounded corners=\rc]
                    (225:4) --  (220:3) --  (235:1) -- (-45:1-\ep) -- (35:1) -- (50:3) -- (45:4); 
                    \node at (225:4.5) {$u_j^\partial$};
                \end{scope}
    
                \begin{scope}[rotate=270]
                    \draw[green!80!black,->,line width=\lw,rounded corners=\rc]
                    (225:4) --  (220:3) --  (235:1) -- (-45:1-\ep) -- (35:1) -- (50:3) -- (45:4); 
                    \node at (225:4.5) {$u_{ i}^\partial$};
                \end{scope}
                
                \node at (0,-5) {(a)};
        \end{scope}

        \begin{scope}[shift={(8,2)}, scale = 0.75]
                \def\r{2};
                \fill[black!5] (0,0) circle (2*\r cm);
                \draw[dashed, gray] (0,0) circle (2*\r cm);
                \coordinate[wvert] (w1) at (225:0.5*\r);
                \coordinate[wvert] (w2) at (45:0.5*\r);
                \coordinate[bvert] (b1) at (135:0.5*\r);
                \coordinate[bvert] (b2) at (315:0.5*\r);
    
                \coordinate[] (b1') at (225:\r);
                \coordinate[] (b2') at (45:\r);
                \coordinate[] (w1') at (135:\r);
                \coordinate[] (w2') at (315:\r);
    
                \draw[] (w1) --  (b1) -- (w2) -- (b2) -- (w1)
                (w1) --  (b1')
                (w2) -- (b2')
                (b1) -- (w1')
                (b2) --  (w2')
                ;
    
                \node at (0,0) {$S i \bar i$};
                \node at (0:0.8*\r) {$S \bar i \bar j$};
                \node at (90:0.8*\r) {$S \bar i j$};
                \node at (180:0.8*\r) {$Sij$};
                \node at (270:0.8*\r) {$S i \bar j$};
    
                \node at (225:4.5) {$u_{\bar j}^\partial$};
                \draw[brown,->,line width=\lw,rounded corners=\rc] (-45:4) --  (-40:3) --  (-55:1) -- (225:1-\ep) -- (180-45:1-\ep) --  (55:1) -- (40:3) -- (45:4); 
                
                \begin{scope}[rotate=90]
                    \node at (225:4.5) {$u_{\bar i}^\partial$};
                \end{scope}
    
                \begin{scope}[rotate=180]
                            \node at (225:4.5) {$u_j^\partial$};
                \end{scope}
                
                \begin{scope}[rotate=270]
                
                            \node at (225:4.5) {$u_{ i}^\partial$};
                \end{scope}
              
                \node at (0,-5) {(b)};
        \end{scope}

         \begin{scope}[shift={(1,-6)}, scale = 0.75]
                    \def\r{2};
                    \fill[black!5] (0,0) circle (2*\r cm);
                    \draw[dashed, gray] (0,0) circle (2*\r cm);
                    \coordinate[bvert] (w1) at (225:0.5*\r);
                    \coordinate[bvert] (w2) at (45:0.5*\r);
                    \coordinate[wvert] (b1) at (135:0.5*\r);
                    \coordinate[wvert] (b2) at (315:0.5*\r);
        
                    \coordinate[] (b1') at (225:\r);
                    \coordinate[] (b2') at (45:\r);
                    \coordinate[] (w1') at (135:\r);
                    \coordinate[] (w2') at (315:\r);

                    \draw[] (w1) --  (b1) -- (w2) -- (b2) -- (w1)
                    (w1) --  (b1')
                    (w2) -- (b2')
                    (b1) -- (w1')
                    (b2) --  (w2')
                    ;
        
                    \node at (0,0) {$S j \bar j$};
                    \node at (0:0.8*\r) {$S \bar i \bar j$};
                    \node at (90:0.8*\r) {$S \bar i j$};
                    \node at (180:0.8*\r) {$Sij$};
                    \node at (270:0.8*\r) {$S i \bar j$};
        
                    \draw[blue,->,line width=\lw,rounded corners=\rc] (225:4) --  (220:3) --  (235:1) -- (-45:1-\ep) -- (35:1) -- (50:3) -- (45:4); 
                    \node at (225:4.5) {$u_{\bar j}^\partial$};
        
                    \begin{scope}[rotate=90]
                        \draw[green!80!black,<-,line width=\lw,rounded corners=\rc]
                        (225:4) --  (220:3) --  (235:1) -- (-45:1-\ep) -- (35:1) -- (50:3) -- (45:4); 
                        \node at (225:4.5) {$u_{\bar i}^\partial$};
                    \end{scope}
        
                    \begin{scope}[rotate=180]
                        \draw[red,->,line width=\lw,rounded corners=\rc]
                        (225:4) --  (220:3) --  (235:1) -- (-45:1-\ep) -- (35:1) -- (50:3) -- (45:4); 
                        \node at (225:4.5) {$u_j^\partial$};
                    \end{scope}
        
                    \begin{scope}[rotate=270]
                        \draw[yellow!80!red,<-,line width=\lw,rounded corners=\rc]
                        (225:4) --  (220:3) --  (235:1) -- (-45:1-\ep) -- (35:1) -- (50:3) -- (45:4); 
                        \node at (225:4.5) {$u_{ i}^\partial$};
                    \end{scope}

                    \node at (0,-5) {(c)};
         \end{scope}
                
         \begin{scope}[shift={(8,-6)}, scale = 0.75]
                    \def\r{2};
                    \fill[black!5] (0,0) circle (2*\r cm);
                    \draw[dashed, gray] (0,0) circle (2*\r cm);
                    \coordinate[bvert] (w1) at (225:0.5*\r);
                    \coordinate[bvert] (w2) at (45:0.5*\r);
                    \coordinate[wvert] (b1) at (135:0.5*\r);
                    \coordinate[wvert] (b2) at (315:0.5*\r);
        
                    \coordinate[] (b1') at (225:\r);
                    \coordinate[] (b2') at (45:\r);
                    \coordinate[] (w1') at (135:\r);
                    \coordinate[] (w2') at (315:\r);

                    \draw[] (w1) --  (b1) -- (w2) -- (b2) -- (w1)
                    (w1) --  (b1')
                    (w2) -- (b2')
                    (b1) -- (w1')
                    (b2) --  (w2')
                    ;
        
                    \node at (0,0) {$S j \bar j$};
                    \node at (0:0.8*\r) {$S \bar i \bar j$};
                    \node at (90:0.8*\r) {$S \bar i j$};
                    \node at (180:0.8*\r) {$Sij$};
                    \node at (270:0.8*\r) {$S i \bar j$};
        
                    \node at (225:4.5) {$u_{\bar j}^\partial$};
                    \draw[brown,->,line width=\lw,rounded corners=\rc] (225:4) --  (230:3) --  (215:1)  --  (180-45+10:1) -- (180-45-5:3) -- (180-45:4); 
                    
                    \begin{scope}[rotate=90]
                        \node at (225:4.5) {$u_{\bar i}^\partial$};
                    \end{scope}
        
                    \begin{scope}[rotate=180]
                        \node at (225:4.5) {$u_j^\partial$};
                    \end{scope}
        
                    \begin{scope}[rotate=270]
                        \node at (225:4.5) {$u_{ i}^\partial$};
                    \end{scope}

                    \node at (0,-5) {(d)};
        \end{scope}        
\end{tikzpicture}}

    \caption{Configuration of strands around a square in $G^\square$ and $\bar {G^\square}$ in the proof of \cref{lem:hat_tau_inverse_well_defined}.}
    \label{fig:paths}
\end{figure}

\begin{lemma} \label{lem:hat_tau_inverse_well_defined}
    We have $\hat \tau^{-1}_{\mathscr{M}_\pi}(\OGr^{\ge 0}_{n,2n} \cap \Pi_{\mathscr{M}_\pi}^{>0}) \subset \Lambda_\pi^{>0}/T_\pi^{>0}$.
\end{lemma}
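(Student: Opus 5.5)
The plan is to reduce the statement to the characterization of Ising dimer models in \cref{prop:George_characterization}, transported to the $A$-side through the modified cluster ensemble map $\hat p$. Fix $V\in \OGr^{\ge 0}_{n,2n}\cap\Pi^{>0}_{\mathscr M_\pi}$ and a representative $W:=\bm\lambda\cdot\lt(V)$ with $\bm\lambda$ satisfying \eqref{eq:lambda_modified_lt}. Every square-face label $Si\bar i$ and its spider partner $Sj\bar j$ contain a full pair $\{i,\bar i\}$ or $\{j,\bar j\}$. Hence both sides of \eqref{eq:cond_plucker_equal} scale by the same factor under $T_\pi^{>0}$, and membership in $\Lambda_\pi^{>0}$ is well defined on $T_\pi^{>0}$-orbits. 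Fix a reduced cactus graph $G$ with $\pi_G=\pi$, and put $A:=\bm F_{G^\square}(W)$ and $[\wt]:=\operatorname{Meas}^{-1}_{G^\square}(V)$. It suffices to prove $A_e^2=A_uA_v+A_fA_g$ at every square face $e$ of $G^\square$. Since \eqref{eq:cond_plucker_equal} is then a statement about Plücker coordinates of $W$ alone, ranging over all $G$ covers all of $\mathcal S^\square_\pi$.

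First step: show $\hat p_{G^\square}(A)=[\wt]$. By \cref{thm:Muller_Speyer}, $p_{G^\square}(\bm F_{G^\square}(\lt(V)))=[\wt]$. Replacing $\lt(V)$ by $\bm\lambda\cdot\lt(V)$ rescales the face variables by the torus action. One then checks, exactly as in the proof of \cref{prop:modified_twist_diagram}, that on internal edges nothing changes up to gauge, and that on boundary edges the factor needed to convert $1/A_{f^\partial}$ into $1/\sqrt{A_fA_g}$ is precisely what \eqref{eq:lambda_modified_lt} provides. The natural tool here is the strand products $m_i$: by \cref{lem:zzpath_from_A} and the remark after the definition, both sides have $m_i = \Delta_{I_i}/\Delta_{I_{i+1}}$, with the $\lambda_i$ entering only through the products $\lambda_i\lambda_{\bar i}$.

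Second step: use that $V\in\OGr$. By \cref{thm:galashin_pylyavskyy} and \cref{thm:gal_pyl_main}, $[\wt]$ lies in the image of $\mathcal I^{>0}_G$. So by \cref{prop:George_characterization}, color-change of $[\wt]$ equals spider moves at all square faces of $[\wt]$. The key observation is that the formula defining $\hat p$ is symmetric in the colors, unlike $p$. So $\hat p_{\overline{G^\square}}(A)$ is the color-change of $\hat p_{G^\square}(A)$, while $\hat p$ commutes with spider moves. Writing $A^{\mathrm{sp}}$ for the result of spider moves at all square faces, both $A$ (viewed on $\overline{G^\square}$) and $A^{\mathrm{sp}}$ are therefore sent by $\hat p_{\overline{G^\square}}$ to the same class. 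They also agree on all non-square faces.

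Main obstacle: $\hat p$ is not injective, so I must show that two $A$-networks on $\overline{G^\square}$ which agree on all non-square faces and have the same $\hat p$-image agree on square faces. The plan is a local computation at a single square face $e$. All four boundary faces are shared, so the $X$-variable of the square face (a gauge invariant) is $A_uA_v/(A_fA_g)$ times a factor coming from $A_e$ and its neighbours. Concretely, the four edge weights around $e$ are $1/(A_eA_x)$ for $x\in\{u,v,f,g\}$. Equality of the face $X$-variables at $e$ for $A$ and $A^{\mathrm{sp}}$ forces $A_e^{-2}$ and $(A^{\mathrm{sp}}_e)^{-2}$ to match up to factors built from the common neighbours. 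Combining this with the equality of $X$-variables at the adjacent non-square faces should give $A_e=A^{\mathrm{sp}}_e=(A_uA_v+A_fA_g)/A_e$, which is the required identity.

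If the face-$X$ argument leaves a global ambiguity, it comes from the kernel of the map from face weights to $X$-variables, which is generated by the torus-type rescalings. I would kill it by comparing the strand invariants $m_i$ from \cref{lem:zzpath_from_A}, which only involve boundary faces and are thus equal for $A$ and $A^{\mathrm{sp}}$.
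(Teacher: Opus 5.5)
Your Steps 1 and 2 are fine, but the step you call the main obstacle carries all the content, and neither of your two arguments closes it.

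For Step 1, the cleanest justification is this. Put $\mu_i(U):=\sqrt{\Delta_{I_i}(U)/\Delta_{I_{i+1}}(U)}$ and use $I_{i+1}=I_i\setminus\{i\}\cup\{\bar i\}$ together with \cref{lem:MS}. This gives $\mu_i(W)=\lambda_i$, hence $\hat\tau_{\mathscr M_\pi}(W)=V$, and \cref{prop:modified_twist_diagram} then yields $\hat p_{G^\square}(\bm F_{G^\square}(W))=[\wt]$.

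\textbf{The local computation fails.} Under $\hat p$ the four edges of the square $e$ carry weights $1/(A_eA_x)$, so $A_e$ cancels completely from the $X$-variable of $e$. That variable is $\bigl(A_{Sij}A_{S\bar i\bar j}/(A_{S\bar i j}A_{Si\bar j})\bigr)^{\pm1}$, so its equality for $A$ and $A^{\mathrm{sp}}$ is automatic and says nothing about $A_e$. At a non-square face $h$ the value $A_h$ also cancels. There you essentially only learn that the product of the square values over all squares around $h$ agrees for $A$ and $A^{\mathrm{sp}}$. That is a global multiplicative system on the edges of $G$, not a local determination of $A_e$.

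\textbf{The fallback is vacuous.} Each $m_i$ is a function of $\hat p(A)$ by \cref{lem:zzpath_from_A}, so it is automatically the same for $A$ and $A^{\mathrm{sp}}$. It is also invariant under exactly the torus ambiguity you want to remove. Finally, your claim that the fibres of $\hat p$ are torus orbits is asserted, not proved.

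\textbf{The missing idea} is to identify the fibres of $\hat p$ via \cref{prop:modified_twist_diagram} applied to $\Gamma=\overline{G^\square}$, and then to use the boundary face values themselves rather than the $m_i$.
\begin{enumerate}
\item Write $A^{\mathrm{sp}}=\bm F_{\overline{G^\square}}(W)$ and $A=\bm F_{\overline{G^\square}}(W')$. Equality of the $\hat p$-images gives $\hat\tau_{\mathscr M_\pi}(W')=\hat\tau_{\mathscr M_\pi}(W)$.
\item Set $s_i:=\mu_i(W')/\mu_i(W)$. By \cref{lem:MS}(1), $\rt(W')=s^{-1}\cdot\rt(W)=\rt(s\cdot W)$, so $W'=s\cdot W$ by injectivity of $\rt$.
\item Comparing $\mu_i(s\cdot W)=\mu_i(W)\sqrt{s_i/s_{\bar i}}$ with $\mu_i(W')=s_i\mu_i(W)$ gives $s_is_{\bar i}=1$. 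So $s\in T^{>0}_\pi$ and $A=s\cdot A^{\mathrm{sp}}$.
\item $A$ and $A^{\mathrm{sp}}$ agree on the boundary faces $f^\partial_{i-1,i}$ and $f^\partial_{i,i+1}$, whose labels $I_i,I_{i+1}$ differ by exchanging $i$ for $\bar i$. Hence $s_{\bar i}=s_i$, so $s_i=1$ for all $i$.
\item Therefore $A=A^{\mathrm{sp}}$, which is exactly $\Delta_{Si\bar i}(W)=\Delta_{Sj\bar j}(W)$.
\end{enumerate}

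With this repair your proof is a genuinely different route from the paper's. The paper computes $A_{f_{j\bar j}}/A_{f_{i\bar i}}$ directly through Muller--Speyer's matchings $M_f$, identifying differences of matchings with zig-zag segments. It then uses the Ising symmetry of segment products to obtain $m_j/m_i$, which the $\bm\lambda$-rescaling cancels. Your route instead combines George's characterization with the fact that $\hat\tau_{\mathscr M_\pi}$ is $T^{>0}_\pi$-invariant and the fibres of $\hat p$ are exactly $T^{>0}_\pi$-orbits.
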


\begin{proof}
Let $V \in \OGr^{\ge 0}_{n,2n} \cap \Pi_{\mathscr{M}_\pi}^{>0}$ and let $U = \hat \tau^{-1}_{\mathscr{M}_\pi}(V)$. We need to check~\eqref{eq:cond_plucker_equal} for all $I \in \mathcal S_\pi^\square$. In order to do this, we use the inverse \[\lvec{ \mathbb M}_{G^\square}: \mathcal X_{G^\square} \rightarrow \mathcal A_{G^\square}\] of the map $p_{G^\square}$ from \cite{MullerSpeyer}. Here we take $G^\square$ to be a plabic graph such that $I \in \mathcal S_G^\square$. The map $\lvec{ \mathbb M}_{G^\square}$ is defined as follows. Let $\alpha_i$ denote the strand with target $u_i^\partial$. 

\begin{figure}[ht]
\begin{center}

    \begin{tikzpicture}

     \begin{scope}[shift={(-4,0)}]
      \def\r{1};
            \fill[black!5] (0,0) circle (1.05*\r cm);
            \path[fill=black!15, draw=none]
  (0,0) -- (225:\r)
  arc[start angle=225, delta angle=90, radius=\r]
  -- cycle;
            \coordinate[wvert, label = right: $w$] (n1) at (0:\r);
            \coordinate[bvert, label = left: $b$] (n2) at (180:\r);
            
            \coordinate[label=center: $\alpha_k$] (no) at (180-45:1.5*\r);
            \coordinate[label = center: $\alpha_l$] (no) at (45:1.5*\r);
            \draw[] (n1) -- (n2);
            
          \draw[red,->,line width=\lw,rounded corners=\rc]  (225:\r) -- (45:\r);
          \draw[blue,->,line width=\lw,rounded corners=\rc]  (-45:\r) -- (180-45:\r);

        \end{scope}
        \end{tikzpicture}
    
\end{center}
\caption{Local configuration of strands near an edge $e=bw$.} \label{fig:loc_config_strands_e=bw}
\end{figure}

We first associate to each face $f$ an almost perfect matching $M_f$ with $\partial M_f = \bm T(f)$ as follows. For an edge $e=bw$ containing strands $\alpha_k,\alpha_l$ as in \cref{fig:loc_config_strands_e=bw}, we let $e \in M_f$ if and only if $f$ is in the \emph{upstream wedge}, i.e., $k \in \bm T(f)$ and $l \notin \bm T(f)$ (the shaded region). We then define 
\[
\lvec{\mathbb M}_{G^\square}([\wt]) := \left(\frac{1}{\wt(M_f)}\right)_{f \in F(G^\square)}.
\]

The target face labels of faces in $G^\square$ near the square face are as shown in \cref{fig:paths}(a). We denote by $f_{kl}$ the face with target face label $S kl$. Let $[\wt] = \operatorname{Meas}_{G^\square}^{-1}(V)$ and $A = \lvec{\mathbb{M}}_{G^\square}([\wt])$. Viewing each edge $e=bw$ of a matching as a $1$-chain in $G^\square$ oriented from $b$ to $w$, we have that $M_{f_{i \bar i}}-M_{f_{ij}}$ is a $1$-chain with boundary $u_j^\partial-u_{\bar i}^\partial$. Moreover, since $M_{f_{i \bar i}}$ and $M_{f_{ij}}$ only differ by edges in $\alpha_{\bar i}$ and $\alpha_j$, $M_{f_{i \bar i}}-M_{f_{ij}}$ is supported on $\alpha_{\bar i} \cup \alpha_j$, so it must be the path from $u_{\bar i}^\partial$ to $u_j^\partial$ in $\alpha_{\bar i} \cup \alpha_j$ (\cref{fig:paths}(b)) and $\frac{A_{f_{ij}}}{A_{f_{i  \bar i}}}$ is the alternating product of edge weights along this path. Applying the same argument to $\bar{G^\square}$ (\cref{fig:paths}(c)), we get that $M_{f_{ij}}-M_{f_{j \bar j}}$ is the path from $u_{\bar j}^\partial$ to $u_{i}^\partial$ in $\alpha_i \cup \alpha_{\bar j}$ (\cref{fig:paths}(d)) and $\frac{A_{f_{j \bar j}}}{A_{f_{i  j}}}$ is the alternating product of edge weights along this path. Since $[\wt] \in \mathcal I_G^{>0}$, the alternating product of edge weights along any segment of $\alpha_i$ is equal to the alternating product of edge weights along the corresponding segment of $\alpha_{\bar i}$ and also the corresponding segment of $\alpha_i$ in $\overline{G^\square}$. This implies that 
\[
\frac{A_{f_{j \bar j}}}{A_{f_{i  \bar i}}}= \frac{A_{f_{ij}}}{A_{f_{i  \bar i}}} \frac{A_{f_{j \bar j}}}{A_{f_{i  j}}} = \frac{m_j([\wt])}{m_i([\wt])}.
\]
Acting using $\bm \lambda$ as in \eqref{eq:lambda_modified_lt}, we get
\[
\frac{\Delta_{S j \bar j}(U)}{\Delta_{S i \bar i}(U)}=\frac{\bm \lambda \cdot A_{f_{j \bar j}}}{\bm \lambda \cdot A_{f_{i  \bar i}}}= \frac{\lambda_j \lambda_{\bar j}}{\lambda_i \lambda_{\bar i}} \frac{A_{f_{j \bar j}}}{A_{f_{i  \bar i}}}= 1. \qedhere
\]
\end{proof}

By \cref{lem:hat_tau_inverse_well_defined}, we have a well-defined map
\[
\hat \tau^{-1}_{\mathscr{M}_\pi}: \OGr^{\ge 0}_{n,2n} \cap \Pi_{\mathscr{M}_\pi}^{>0} \rightarrow \Lambda_\pi^{>0}/T_\pi^{>0}.
\]

\subsection{Proof of the main theorem}

We are now ready to prove our main result.
\begin{theorem}\label{thm:main_thm_inverse}
    The map $\hat \tau_{{\mathscr{M}}_\pi}:\Lambda_{\pi}^{>0}/T_\pi^{>0} \rightarrow \OGr^{\ge 0}_{n,2n} \cap \Pi_{\mathscr{M}_\pi}^{>0} $ is a bijection whose inverse is $\hat \tau^{-1}_{\mathscr{M}_\pi}$. They fit into the following commutative diagram:
\[
\begin{tikzcd}
\mathcal C_{G}^{>0} /T_{\pi}^{>0}
  \arrow[r, "q_G"] 
& 
\mathcal I_{G}^{>0} 
  \arrow[d, "\phi_n \circ \operatorname{Corr}_{G}"] 
\\
\Lambda_{\pi}^{>0}/T_\pi^{>0}
  \arrow[u, "\bm H_{G}"]   
  \arrow[r,
    bend right=25,
    swap,
    "{\hat \tau_{{\mathscr{M}}_\pi}}"
  ] 
& 
\OGr^{\ge 0}_{n,2n} \cap \Pi_{\mathscr{M}_\pi}^{>0}
  \arrow[l,
    bend right=25,
    swap,
    "\hat \tau^{-1}_{\mathscr{M}_\pi}"
  ]
\end{tikzcd}
\]
In particular, the composition
\[
q_G \circ \bm H_G \circ \hat \tau^{-1}_{\mathscr{M}_\pi} \circ \phi_n = \operatorname{Corr}_{G}^{-1}
\]
is the inverse of the boundary correlation map $\operatorname{Corr}_{G}$.
\end{theorem}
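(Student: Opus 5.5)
The plan is to show that $\hat\tau_{\mathscr M_\pi}$ and $\hat\tau^{-1}_{\mathscr M_\pi}$ are mutually inverse, and then deduce bijectivity of everything else from the commutative square already established after the torus-action lemmas. First I check that $\hat\tau_{\mathscr M_\pi}$ descends to $\Lambda_\pi^{>0}/T_\pi^{>0}$ and lands in $\OGr^{\ge0}_{n,2n}\cap\Pi^{>0}_{\mathscr M_\pi}$. For $V\in\Lambda_\pi^{>0}$, \eqref{eq:C_A} and \cref{prop:modified_twist_diagram} give
\[
\hat\tau_{\mathscr M_\pi}(V)=\operatorname{Meas}_{G^\square}\circ\hat p_{G^\square}\circ\bm F_{G^\square}(V).
\]
Since $\bm F_{G^\square}(V)$ lies in the image of $\mathcal C_G^{>0}$, \cref{lem:modified_twist_equals_qG} shows that $\hat p_{G^\square}\bm F_{G^\square}(V)$ lies in the image of $\mathcal I_G^{>0}$. \cref{thm:gal_pyl_main} then places $\hat\tau_{\mathscr M_\pi}(V)$ in the orthogonal Grassmannian. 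Invariance under $T_\pi^{>0}$ follows from the $T_\pi^{>0}$-invariance of $q_\pi$.

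Second, I prove $\hat\tau_{\mathscr M_\pi}\circ\hat\tau^{-1}_{\mathscr M_\pi}=\mathrm{id}$ on $\OGr^{\ge0}_{n,2n}\cap\Pi^{>0}_{\mathscr M_\pi}$. Write $\hat\tau^{-1}(V)=\bm\lambda\cdot\lt(V)$ with $\bm\lambda$ as in \eqref{eq:lambda_modified_lt}. Applying $\hat\tau=\bm\mu\cdot\rt$ and using \cref{lem:MS}(1) together with $\rt\circ\lt=\mathrm{id}$ (\cref{thm:Muller_Speyer}) gives
\[
\hat\tau(\bm\lambda\cdot\lt V)=\bm\mu\,\bm\lambda^{-1}\cdot V,
\]
where $\mu_i=\sqrt{\Delta_{I_i}(U)/\Delta_{I_{i+1}}(U)}$ and $U=\bm\lambda\cdot\lt(V)$. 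I then compute $\Delta_{I_i}(U)=\big(\prod_{k\in I_i}\lambda_k\big)\Delta_{I_i}(\lt V)$. By the left-twist analogue of \cref{lem:MS}(2) (boundary Plücker coordinates invert, via the relation between $I_i$ and the reverse necklace), $\mu_i$ becomes an explicit expression in $\bm\lambda$ and the $\Delta_{I_j}(V)$. The goal is to show $\bm\mu\bm\lambda^{-1}\in T_\pi^{>0}$, or at least that it acts trivially on $V$ modulo the stabilizer. Because the choice of $\bm\lambda$ is only defined up to $T_\pi^{>0}$, this amounts to checking $\mu_i\mu_{\bar i}=\lambda_i\lambda_{\bar i}$, i.e.\ $\mu_i\mu_{\bar i}=\Delta_{I_{i+1}}(V)/\Delta_{I_i}(V)$.

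\textbf{The main obstacle} is this bookkeeping: the Plücker relation across $I_i$ and $I_{i+1}$ (which differ by removing $i$ and adding $\pi(i)$) and the orthogonality $\Delta_I=\Delta_{I^c}$ must combine so that the products over $\bm\lambda$ telescope. If the direct computation becomes unwieldy, I would instead work on the $\mathcal X$ side with the quantities $m_i$. The remark after the definition of $\hat\tau^{-1}$ identifies $\lambda_i\lambda_{\bar i}$ with $1/m_i$, and \cref{lem:zzpath_from_A} computes $m_i$ of $\hat p(A)$. Comparing the two should give exactly the needed identity.

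Third, I show injectivity of $\hat\tau_{\mathscr M_\pi}$ on the quotient. Suppose $\hat\tau(V)=\hat\tau(V')$. Then $\rt(V)$ and $\rt(V')$ differ by a torus element, so $V'=\bm\nu\cdot V$ for some $\bm\nu\in\Rpos^{2n}$ by \cref{lem:MS}(1). Comparing the scalings forces $\nu_i\nu_{\bar i}=1$ up to the overall projective scaling, so $\bm\nu\in T_\pi^{>0}$. Together with the second step, $\hat\tau$ and $\hat\tau^{-1}$ are inverse bijections. Finally, in the commutative square, $\bm H_G$ is a bijection (the earlier proposition, which is $T_\pi^{>0}$-equivariant and so descends to the quotient) and $\phi_n\circ\operatorname{Corr}_G$ is a bijection (\cref{thm:galashin_pylyavskyy}). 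Hence $q_G$ is a bijection, and the formula $q_G\circ\bm H_G\circ\hat\tau^{-1}\circ\phi_n=\operatorname{Corr}_G^{-1}$ follows by reading off the diagram.
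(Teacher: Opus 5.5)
Your overall strategy is the paper's: write both maps as torus-rescaled $\rt$ and $\lt$, and let the torus factors cancel using \cref{lem:MS} together with $\rt\circ\lt=\mathrm{id}$. However, your central step aims at the wrong target and is left undone.

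\textbf{The gap in your second step.} You reduce $\hat\tau\circ\hat\tau^{-1}=\mathrm{id}$ to $\mu_i\mu_{\bar i}=\lambda_i\lambda_{\bar i}$, i.e.\ to $\bm\mu\bm\lambda^{-1}\in T_\pi^{>0}$, on the grounds that ``the choice of $\bm\lambda$ is only defined up to $T_\pi^{>0}$.'' This does not suffice, for two reasons.
\begin{itemize}
\item The composite $\hat\tau\circ\hat\tau^{-1}$ takes values in $\OGr^{\ge0}_{n,2n}\cap\Pi^{>0}_{\mathscr M_\pi}$, which is not quotiented by $T_\pi^{>0}$, and a nontrivial element of $T_\pi^{>0}$ moves $V$. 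For $n=2$, $\bm\nu=(t,1,t^{-1},1)$ sends $(\Delta_{12},\Delta_{34})=(c,c)$ to $(tc,t^{-1}c)$ while fixing $\Delta_{13}=\Delta_{24}$, so it even leaves $\OGr$.
\item The freedom in $\bm\lambda$ cannot absorb such a factor. Replacing $\bm\lambda$ by $\bm\lambda\bm\nu$ with $\bm\nu\in T_\pi^{>0}$ multiplies $\mu_i$ by $\sqrt{\nu_i/\nu_{\bar i}}=\nu_i$, so $\bm\mu\bm\lambda^{-1}$ is independent of the choice.
\end{itemize}
What you actually need is $\bm\mu=\bm\lambda$.

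\textbf{The fix is one line.} The ``main obstacle'' you anticipate (Pl\"ucker relations, orthogonality, telescoping) does not arise. Since $\rt\circ\lt=\mathrm{id}$, applying \cref{lem:MS}(2) to $\lt(V)$ gives $\Delta_{I_j}(\lt V)\propto\Delta_{I_j}(V)^{-1}$ for the forward necklace; no reverse necklace is needed. Also $I_{i+1}=(I_i\setminus\{i\})\cup\{\bar i\}$. Hence, with $U=\bm\lambda\cdot\lt(V)$,
\[
\mu_i^2=\frac{\Delta_{I_i}(U)}{\Delta_{I_{i+1}}(U)}=\frac{\lambda_i}{\lambda_{\bar i}}\cdot\frac{\Delta_{I_{i+1}}(V)}{\Delta_{I_i}(V)}=\frac{\lambda_i}{\lambda_{\bar i}}\cdot\lambda_i\lambda_{\bar i}=\lambda_i^2 ,
\]
and therefore $\hat\tau(U)=\bm\lambda\cdot\rt(\bm\lambda\cdot\lt V)=\bm\lambda\bm\lambda^{-1}\cdot V=V$.

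\textbf{The rest.} Your third step (injectivity on the quotient) is correct and suffices for the reverse composition. There, $V'=\bm\nu\cdot V$ with $\nu_i=\sqrt{\nu_i/\nu_{\bar i}}$, which forces $\nu_i\nu_{\bar i}=1$. The paper compresses both cancellations into citing \cref{lem:MS} and \cref{lem:zzpath_from_A}. Finally, your deduction that $q_G$ is bijective needs $\hat\tau$ restricted to $\Lambda_\pi^{>0}/T_\pi^{>0}$ to be onto $\OGr^{\ge0}_{n,2n}\cap\Pi^{>0}_{\mathscr M_\pi}$. For that you must also cite \cref{lem:hat_tau_inverse_well_defined}, which says that $\hat\tau^{-1}$ lands in $\Lambda_\pi^{>0}/T_\pi^{>0}$ and not merely in $\Pi^{>0}_{\mathscr M_\pi}/T_\pi^{>0}$.
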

\begin{proof}
The maps ${\hat \tau_{{\mathscr{M}}_\pi}}$ and $\hat \tau^{-1}_{\mathscr{M}_\pi}$ are defined by modifying the right and left twists respectively by torus actions. By \cref{lem:MS} and \cref{lem:zzpath_from_A}, these cancel out.
\end{proof}

\subsection{Examples}

We now compute some examples to illustrate the result of \cref{thm:main_thm_inverse}. The computations were carried out using the computer algebra system {\tt Macaulay2} \cite{M2}. The code used for our computations is available in \cite{code}.

\begin{example}[$n=2$]
Consider the $n=2$ Ising model in \cref{fig:n2_GP_commutativity}(a). We compute the composition $q_G \circ \bm H_G \circ \hat \tau^{-1}_{\mathscr{M}_\pi} \circ \phi_n$ and verify that it is the inverse of $\operatorname{Corr}_{G}$. Let 
\[
M=\begin{bmatrix}
    1&r\\
    r&1
\end{bmatrix} \in \operatorname{Mat}_2^{\text{sym}}(\R,1).
\]
Applying $\phi_2$, we obtain
\[
\tilde M = \begin{bmatrix} 1 & 1 & r&-r\\
-r&r&1&1
\end{bmatrix}
\]
with Pl\"ucker coordinates
\[
\Delta_{12} = \Delta_{34} = 2r, \quad \Delta_{14} = \Delta_{23}=1-r^2, \quad \Delta_{13}=\Delta_{24} = 1+r^2.
\]
The reverse Grassmann necklace is
\[
\bm I^{\text{rev}}_\Gamma=(14,12,23,34).
\]
Using the definition of the left twist, we get
\[
{\lvec{\tau}_{\mathscr{M}_\pi}}(\tilde M)
=
\begin{bmatrix}
\frac{1}{1-r^2} & \frac12 & -\frac{r}{1-r^2} & -\frac{1}{2r}\\[5pt]
\frac{r}{1-r^2} & \frac{1}{2r} & \frac{1}{1-r^2} & \frac12
\end{bmatrix}.
\]
We now apply the torus factor to obtain
\(
\hat \tau^{-1}_{\mathscr M_\pi}(\tilde M).
\) The Grassmann necklace is
\[
\bm I_\Gamma=(12,23,34,14),
\]
and the matching is
\[
\bar 1=3,\qquad \bar 2=4.
\]
Thus, $\bm\lambda=(\lambda_1,\lambda_2,\lambda_3,\lambda_4)$ must satisfy 
\[
\lambda_1\lambda_3
=
\frac{\Delta_{23}}{\Delta_{12}}
=
\frac{1-r^2}{2r},
\qquad
\lambda_2\lambda_4
=
\frac{\Delta_{34}}{\Delta_{23}}
=
\frac{2r}{1-r^2}.
\]
Taking
\[
\lambda_1 = \lambda_2 = 1,
\qquad
\lambda_3 = \frac{1 - r^2}{2r} \quad \text{and} \quad \lambda_4 = \frac{2r}{1-r^2},
\]
we get
\[
\hat \tau^{-1}_{\mathscr M_\pi}(\tilde M)
=
\bm\lambda\cdot {\lvec{\tau}_{\mathscr{M}_\pi}}(\tilde M)
=
\begin{bmatrix}
\frac{1}{1 - r^2} & \frac{1}{2} & -\frac{1}{2} & -\frac{1}{1 - r^2}
\\[0.7em]
\frac{r}{1 - r^2} & \frac{1}{2r} & \frac{1}{2r} & \frac{r}{1 - r^2}
\end{bmatrix}.
\]
Recall that this an element in $\Lambda_{\pi}^{>0} / T_\pi^{>0}$. Up to the action of $T_\pi^{> 0}$, the Pl\"ucker coordinates of this point are given by:
\[
\Delta_{12}
=
\Delta_{23}
=
\Delta_{34}
=
\frac{1}{2r},
\qquad
\Delta_{13}
=
\Delta_{24}
=
\frac{1+r^2}{2r(1-r^2)},
\qquad
\Delta_{14}
=
\frac{2r}{(1-r^2)^2}.
\]
So after applying \(\bm H_G\) (see Figure~\ref{fig:comm_Ising_cluster_ensemble}), we compute $C \in \mathcal{C}_{G}^{> 0} / T_{\pi}^{> 0}$ and get:
\[
C_{b^\partial_1} = C_{f} = C_{g} = \frac{1}{2r}
\quad \text{and} \quad
C_{b^\partial_2} = \frac{2r}{(1-r^2)^2}.
\]
Finally, applying the Ising cluster ensemble map \(q_G\), we get
\[
c_e
=
\sqrt{
\frac{C_{b^\partial_1}C_{b^\partial_2}}{C_{b^\partial_1} C_{b^\partial_2} + C_f C_g}
}
=
\frac{2r}{1+r^2}, \qquad 
s_e
=
\sqrt{
\frac{C_fC_g}{C_{b^\partial_1} C_{b^\partial_2} + C_f C_g}
}
=
\frac{1-r^2}{1+r^2}.
\]
Thus, the composition
\(
q_G \circ \bm H_G \circ \hat \tau^{-1}_{\mathscr{M}_\pi} \circ \phi_2(M)
\)
is the map
\[
\begin{bmatrix}
    1&r\\
    r&1
\end{bmatrix} \mapsto (c_e,s_e) = \left( \frac{2r}{1+r^2},\frac{1-r^2}{1+r^2}  \right),
\]
which corresponds to the Ising model with coupling 
\[
J = \frac{1}{2} {\rm arctanh}(c_e)  = \frac{1}{2} {\rm arctanh}\left(\frac{2r}{1 + r^2}\right) =  \operatorname{arctanh}(r).
\] 

By \cref{eg:ising_n_2}, \(\operatorname{Corr}_G\) sends the Ising model with coupling constant $J$ to 
\[
\begin{bmatrix} 1 & \tanh J\\ \tanh J &  1
\end{bmatrix}.
\]
Hence,
\(
q_G \circ \bm H_G \circ \hat \tau^{-1}_{\mathscr{M}_\pi} \circ \phi_2
\)
is the inverse of \(\operatorname{Corr}_G\) in this example.
\end{example}

\begin{example}[$n=4$] Consider the $n = 4$ Ising model in \cref{fig:n=4Ising}. The Grassmann necklace is
\[
\bm I_{\Gamma} = (1234, 2346, 3467, 4567, 5678, 3678, 1378, 1238),
\]
and the reverse necklace is
\[
\bm{I}_{\Gamma}^{\mathrm{rev}} = (5678, 1578, 1258, 1238, 1234, 1245, 2456, 4567).
\]

\begin{figure}[ht]
    \centering
    \scalebox{0.9}{
    \begin{tikzpicture}
        \begin{scope}[shift={(0,0)}]
            \def\r{2};
            \fill[black!5] (0,0) circle (1*\r cm);
            \coordinate[nvert, label=below: $b^\partial_1$] (b1) at (-120:\r);
            \coordinate[nvert, label=right: $b^\partial_4$] (b4) at (0:\r);
            \coordinate[nvert, label=above: $b^\partial_3$] (b3) at (90:\r);
            \coordinate[nvert, label=left: $b^\partial_2$] (b2) at (180:\r);

            \coordinate[nvert, label = below: $b_0$] (b0) at (140:0*\r);

            \draw[] (b1) -- node[below right]{$e_{01}$} (b0);
            \draw[] (b4) -- node[      below]{$e_{04}$} (b0);
            \draw[] (b3) -- node[      right]{$e_{03}$} (b0);
            \draw[] (b2) -- node[below right]{$e_{24}$} (b3);

            \node at (0.6,-1.6) {$f_0$};
            \node at (-1.3,-0.2) {$f_1$};
            \node at (-1.2,1.4) {$f_2$};
            \node at (1,1.3) {$f_3$};
            
            \node at (0,-3) {(a) $G$};

        \end{scope}

        \begin{scope}[shift={(5.5,0)}]
            \def\r{2};
            \fill[black!5] (0,0) circle (1*\r cm);
            \coordinate[nvert] (b1) at (-120:\r);
            \coordinate[nvert] (b4) at (0:\r);
            \coordinate[nvert] (b3) at (90:\r);
            \coordinate[nvert] (b2) at (180:\r);

            \coordinate[nvert] (u1) at (30:\r);
            \coordinate[nvert] (u2) at (60:\r);
            \coordinate[nvert] (u3) at (120:\r);
            \coordinate[nvert] (u4) at (150:\r);
            
            \coordinate[nvert] (u5) at ( -30:\r);
            \coordinate[nvert] (u6) at ( -100:\r);
            \coordinate[nvert] (u7) at (-135:\r);
            \coordinate[nvert] (u8) at (-170:\r);

            \coordinate[nvert] (w01) at ( -120:  0.5  * \r);
            \coordinate[nvert] (w02) at (0: 0.5 * \r);
            \coordinate[nvert] (w03) at (90: 0.5 * \r);
            \coordinate[nvert] (w34) at ( 135: 0.707 * \r);

            \coordinate[nvert] (b0) at (140:0*\r);

            \draw[gray, dashed] (b1) -- node[left]{} (b0);
            \draw[gray, dashed] (b3) -- node[right]{} (b0);
            \draw[gray, dashed] (b4) -- node[below]{} (b0);
            \draw[gray, dashed] (b2) -- node[above left]{} (b3);

            \draw[] (w01) -- node[left]{} (u6);
            \draw[] (w01) -- node[left]{} (w02);
            \draw[] (w02) -- node[left]{} (u5);
            \draw[] (w02) -- node[left]{} (u1);
            \draw[] (w02) -- node[left]{} (w03);
            \draw[] (w03) -- node[left]{} (u2);
            \draw[] (w03) -- node[left]{} (w34);
            \draw[] (w34) -- node[left]{} (u3);
            \draw[] (w34) -- node[left]{} (u4);
            \draw[] (w34) -- node[left]{} (u8);
            \draw[] (w01) -- node[left]{} (u7);
            \draw[] (w01) -- node[left]{} (w03);

            \node at (0,-3) {(b)  $G^\times$};
        \end{scope}

        \begin{scope}[shift={(11,0)}]
            \def\r{2};
            \fill[black!5] (0,0) circle (1*\r cm);
            
            \coordinate[label=right: $u_7^\partial$] (u7) at (10:\r);
            \coordinate[label=above: $u_6^\partial$] (u6) at (60:\r);
            \coordinate[label=above: $u_5^\partial$] (u5) at (120:\r);
            \coordinate[label=left: $u_4^\partial$] (u4) at (156:\r);
            \coordinate[label=below: $u_8^\partial$] (u8) at ( -45:\r);
            \coordinate[label=below: $u_1^\partial$] (u1) at ( -100:\r);
            \coordinate[label=left: $u_2^\partial$] (u2) at (-135:\r);
            \coordinate[label=left: $u_3^\partial$] (u3) at (-170:\r);

            \coordinate[wvert] (w01_1) at ( -110:0.7*\r);
            \coordinate[wvert] (w01_4) at ( -135:0.45*\r);
            \coordinate[bvert] (w01_2) at (  -105:0.45*\r);
            \coordinate[bvert] (w01_3) at ( -130:0.7*\r);
            
            \draw[] (w01_1) -- node[left]{} (w01_3);
            \draw[] (w01_1) -- node[left]{} (w01_2);
            \draw[] (w01_3) -- node[left]{} (w01_4);
            \draw[] (w01_4) -- node[left]{} (w01_2);

            \coordinate[wvert] (w02_1) at ( -10:0.7*\r);
            \coordinate[wvert] (w02_4) at (-35:0.45*\r);
            \coordinate[bvert] (w02_2) at ( -05:0.45*\r);
            \coordinate[bvert] (w02_3) at ( -30:0.7*\r);
            
            \draw[] (w02_1) -- node[left]{} (w02_3);
            \draw[] (w02_1) -- node[left]{} (w02_2);
            \draw[] (w02_3) -- node[left]{} (w02_4);
            \draw[] (w02_4) -- node[left]{} (w02_2);

            \coordinate[wvert] (w03_4) at (55:0.2*\r);
            \coordinate[wvert] (w03_1) at (107:0.4*\r);
            \coordinate[bvert] (w03_3) at ( 73:0.4*\r);
            \coordinate[bvert] (w03_2) at ( 125:0.2*\r);
            
            \draw[] (w03_1) -- node[left]{} (w03_3);
            \draw[] (w03_1) -- node[left]{} (w03_2);
            \draw[] (w03_3) -- node[left]{} (w03_4);
            \draw[] (w03_4) -- node[left]{} (w03_2);

            \coordinate[bvert] (w34_3) at ( 150: 0.8 * \r); 
            \coordinate[wvert] (w34_4) at ( 157: 0.58 * \r); 

            \coordinate[bvert] (w34_1) at ( 135: 0.54 * \r);
            \coordinate[wvert] (w34_2) at ( 135: 0.78 * \r); 
            
            \draw[] (w34_1) -- node[left]{} (w34_2);
            \draw[] (w34_2) -- node[left]{} (w34_3);
            \draw[] (w34_3) -- node[left]{} (w34_4);
            \draw[] (w34_4) -- node[left]{} (w34_1);

            \draw[] (w01_1) -- node[left]{} (u1);
            \draw[] (w01_2) -- node[left]{} (w02_4);
            \draw[] (w02_3) -- node[left]{} (u8);
            \draw[] (w02_1) -- node[left]{} (u7);
            \draw[] (w02_2) -- node[left]{} (w03_4);
            \draw[] (w03_3) -- node[left]{} (u6);
            \draw[] (w03_1) -- node[left]{} (w34_1);
            \draw[] (w34_2) -- node[left]{} (u5);
            \draw[] (w34_3) -- node[left]{} (u4);
            \draw[] (w34_4) -- node[left]{} (u3);
            \draw[] (w01_3) -- node[left]{} (u2);
            \draw[] (w01_4) -- node[left]{} (w03_2);

            \node at (129:0.66*\r) {\tiny $b$};
            \node at (158:0.72*\r) {\tiny $b$};
            \node at (142.5:0.83*\r) {\tiny $a$};
            \node at (146.5:0.5*\r) {\tiny $a$};

            \node at (90:0.46*\r) {\tiny $d$};
            \node at (90:0.10*\r) {\tiny $d$};
            \node at (120:0.33*\r) {\tiny $c$};
            \node at (60:0.33*\r) {\tiny $c$};
            
            \node at (-120:0.35*\r) {\tiny $f$};
            \node at (-120:0.8*\r)  {\tiny $f$};
            \node at (-136:0.60*\r) {\tiny $e$};
            \node at (-103:0.6*\r)  {\tiny $e$};
            
            \node at (-23:0.38*\r) {\tiny $h$};
            \node at (-21:0.73*\r) {\tiny $h$};
            \node at (-02:0.60*\r) {\tiny $g$};
            \node at (-37:0.60*\r) {\tiny $g$};

            \node at (0,-3) {(c)  $G^\square$};
        \end{scope}
    \end{tikzpicture}}
    
    \caption{An Ising model with $n=4$ boundary vertices (a), its medial graph (b) and the corresponding dimer model (c). Here the parameters $a,b,c,d,e,f,g, h$ are given in terms of the coupling constants as follows:\\
    $a = \tanh(2J_{23}), \quad c = \tanh(2J_{03}), \quad e = \tanh(2J_{01}), \quad g = \tanh(2J_{04})\\
    \quad b = \sech(2J_{23}), \quad d = \sech(2J_{03}), \quad f = \sech(2J_{01}), \quad h = \sech(2 J_{04})$.}
    \label{fig:n=4Ising}
\end{figure}

\noindent The boundary measurement map computed from the edge weights is
\[
{\rm Meas} = \resizebox{0.4\textwidth}{!}{$
\begin{bmatrix}
1 & 0 & 0 & 0 &
0 &
- \dfrac{cf}{de} &
- \dfrac{dh+f}{deg} &
- \dfrac{fh+d}{deg}
\\[3mm]
0 & 1 & 0 & 0 &
0 &
\dfrac{c}{de} &
\dfrac{dfh+1}{deg} &
\dfrac{df+h}{deg}
\\[3mm]
0 & 0 & 1 & 0 &
- \dfrac{b}{a} &
- \dfrac{1}{ad} &
- \dfrac{c}{adg} &
- \dfrac{ch}{adg}
\\[3mm]
0 & 0 & 0 & 1 &
\dfrac{1}{a} &
\dfrac{b}{ad} &
\dfrac{bc}{adg} &
\dfrac{bch}{adg}
\end{bmatrix}
$}.
\]
We compute the correlation matrix using \cite[Lemma 3.2]{GalashinPylyavskyy} and we get
\[
M :=  \phi_4^{-1}({\rm Meas}) = 
\resizebox{0.5\textwidth}{!}{$
\begin{bmatrix}
1 &
\dfrac{a(1-d)(1 - f)}{ce(b+1)} &
\dfrac{c(1-f)}{e(d+1)} &
\dfrac{e(1-h)}{g(f+1)}
\\[1.2em]
\dfrac{a(1-d)(1-f)}{ce(b+1)} &
1 &
\dfrac{1-b}{a} &
\dfrac{a(1-d)(1-h)}{gc(b+1)}
\\[1.2em]
\dfrac{c(1-f)}{e(d+1)} &
\dfrac{1-b}{a} &
1 &
\dfrac{c(1-h)}{g(d+1)}
\\[1.2em]
\dfrac{e(1-h)}{g(f+1)} &
\dfrac{a(1-d)(1-h)}{gc(b+1)} &
\dfrac{c(1-h)}{g(d+1)} &
1
\end{bmatrix}
$}.
\]
So the matrix $\tilde{M}$ is given by
\[
\tilde{M} = 
\resizebox{0.93\textwidth}{!}{$
\begin{bmatrix}
1 & 1 &
\dfrac{a(1-d)(1 - f)}{ce(b+1)} &
-\dfrac{a(1-d)(1 - f)}{ce(b+1)} &
-\dfrac{c(1-f)}{e(d+1)} &
\dfrac{c(1-f)}{e(d+1)} &
\dfrac{e(1-h)}{g(f+1)} &
-\dfrac{e(1-h)}{g(f+1)}
\\[1.2em]
-\dfrac{a(1-d)(1-f)}{ce(b+1)} &
\dfrac{a(1-d)(1-f)}{ce(b+1)} &
1 & 1 &
\dfrac{1-b}{a} &
-\dfrac{1-b}{a} &
-\dfrac{a(1-d)(1-h)}{gc(b+1)} &
\dfrac{a(1-d)(1-h)}{gc(b+1)}
\\[1.2em]
\dfrac{c(1-f)}{e(d+1)} &
-\dfrac{c(1-f)}{e(d+1)} &
-\dfrac{1-b}{a} &
\dfrac{1-b}{a} &
1 & 1 &
\dfrac{c(1-h)}{g(d+1)}&
-\dfrac{c(1-h)}{g(d+1)}
\\[1.2em]
-\dfrac{e(1-h)}{g(f+1)} &
 \dfrac{e(1-h)}{g(f+1)} &
 \dfrac{a(1-d)(1-h)}{gc(b+1)} &
-\dfrac{a(1-d)(1-h)}{gc(b+1)} &
-\dfrac{c(1-h)}{g(d+1)} &
 \dfrac{c(1-h)}{g(d+1)}&
1 & 1
\end{bmatrix}.
$}
\]
Note that the matrices ${\rm Meas}$ and $\tilde{M}$ define the same element in $\OGr(4,8) \cap \Pi^{>0}_{\mathscr{M}_\pi}$. Applying the left twist to this matrix we get the following
\[
\lt(\tilde{M}) =
\resizebox{0.85\textwidth}{!}{$
\begin{bmatrix}
\dfrac{f+1}{2f} &
\dfrac{1}{2} &
0 &
0 &
0 &
-\dfrac{ed}{2fc} &
-\dfrac{g(f+1)}{2he} &
-\dfrac{(f+1)(d+h)}{2ged}
\\[1.2em]
\dfrac{ea}{2fc(1-b)} &
\dfrac{(b+1)(df+h)}{2heca} &
\dfrac{b+1}{2b} &
\dfrac{1}{2} &
-\dfrac{a}{2b} &
0 &
0 &
-\dfrac{hca}{2gd(1-b)}
\\[1.2em]
-\dfrac{ed}{2fc} &
-\dfrac{dh+f}{2hec} &
\dfrac{a}{2b} &
\dfrac{b+d}{2da} &
\dfrac{bd+1}{2db} &
\dfrac{1}{2} &
0 &
0
\\[1.2em]
0 &
-\dfrac{gf}{2he} &
0 &
\dfrac{gcb}{2d a(1-h)} &
\dfrac{gc}{2db(1-h)} &
\dfrac{g(d+f)}{2fc(1-h)} &
\dfrac{h+1}{2h} &
\dfrac{1}{2}
\end{bmatrix}
$}.
\]
We then deduce that $\bm \lambda$ must satisfy the following equations
\begin{align*}
\lambda_1 \lambda_{6} = \frac{\Delta_{2346}}{\Delta_{1234}} = \frac{\Delta_{1378}}{\Delta_{3678}} = \frac{cf}{de}, 
\quad 
\lambda_2 \lambda_{7} = \frac{\Delta_{3467}}{\Delta_{2346}} = \frac{\Delta_{1238}}{\Delta_{1378}} = \frac{eh}{fg},\\
\lambda_3 \lambda_{5} = \frac{\Delta_{4567}}{\Delta_{3467}} = \frac{\Delta_{3678}}{\Delta_{5678}} = \frac{b}{a}, 
\quad 
\lambda_4 \lambda_{8} = \frac{\Delta_{5678}}{\Delta_{4567}} = \frac{\Delta_{2346}}{\Delta_{1238}} = \frac{adg}{bch}, 
\end{align*}
We can then choose 
\[
    \lambda_1 = \lambda_2 = \lambda_3 = \lambda_4 = 1 \quad \text{ and } \quad 
    \lambda_5 = \frac{b}{a}, \ \lambda_6 = \frac{cf}{de}, \ 
    \lambda_7 = \frac{eh}{fg}, \ \lambda_8 = \frac{adg}{bch}.
\]
Computing $\hat{\tau}_{\mathscr{M}_\pi}^{-1}(\tilde{M}) = \bm\lambda\cdot {\lvec{\tau}_{\mathscr{M}_\pi}}(\tilde M)$ yields the following:
\[
\hat{\tau}^{-1}_{\mathscr{M}_\pi}(\tilde{M}) = 
\resizebox{0.85\textwidth}{!}{$
\begin{bmatrix}
\dfrac{f+1}{2f} &
\dfrac{1}{2} &
0 &
0 &
0 &
-\dfrac{1}{2} &
-\dfrac{f+1}{2f} &
-\dfrac{ea(d+h)}{2hcb(1-f)}
\\[1.2em]
\dfrac{e(b+1)}{2fca} &
\dfrac{a(df+h)}{2hec(1-b)} &
\dfrac{b+1}{2b} &
\dfrac{1}{2} &
-\dfrac{1}{2} &
0 &
0 &
-\dfrac{b+1}{2b}
\\[1.2em]
-\dfrac{ed}{2fc} &
-\dfrac{dh+f}{2hec} &
\dfrac{a}{2b} &
\dfrac{b+d}{2da} &
\dfrac{bd+1}{2da} &
\dfrac{fc}{2ed} &
0 &
0
\\[1.2em]
0 &
-\dfrac{gf}{2he} &
0 &
\dfrac{(h+1)cb}{2gda} &
\dfrac{(h+1)c}{2gda} &
\dfrac{(h+1)(d+f)}{2ged} &
\dfrac{ge}{2f(1-h)} &
\dfrac{gda}{2hcb}
\end{bmatrix}
$}.
\]
We recall that this is an element of $\Lambda^{>0}_{\pi} / T^{> 0}_{\pi}$. We now compute the $C$-variables which are indexed by the vertices and faces of $G$ in \cref{fig:n=4Ising}(a). They correspond to the Pl\"ucker coordinates of $\hat{\tau}^{-1}_{\mathscr{M}_\pi}(\tilde M)$ indexed by the labels of the non-square faces of the plabic graph in \cref{fig:n=4Ising-2}. We~obtain
\[
\begin{aligned}
C_{b_0} &= \Delta_{1347}
= \tfrac{(h+1)(f+1)e(d+1)(b+1)}{8gf^2da},
&
C_{b^\partial_1} &= \Delta_{2346}
= \tfrac{g(f+1)(d+1)(b+1)}{8(1-h)eda},
\\[0.6em]
C_{b^\partial_2} &= \Delta_{1238}
= \tfrac{(h+1)g(f+1)d(b+1)a}{8h^2e(1-d)b^2},
&
C_{b^\partial_3} &= \Delta_{3678}
= \tfrac{(h+1)(f+1)(d+1)(b+1)a}{8gedb^2},
\\[0.6em]
C_{b^\partial_4} &= \Delta_{4567}
= \tfrac{ge(d+1)(b+1)}{8(1-h)(1-f)da},
&
C_{f_0} &= \Delta_{1234}
= \tfrac{g(f+1)(d+1)(b+1)}{8(1-h)eda},
\\[0.6em]
C_{f_1} &= \Delta_{1378}
= \tfrac{(h+1)(f+1)ed(b+1)a}{8gf^2(1-d)b^2},
&
C_{f_2} &= \Delta_{3467}
= \tfrac{ge(d+1)(b+1)}{8(1-h)(1-f)da},
\\[0.6em]
C_{f_3} &= \Delta_{5678}
= \tfrac{g(f+1)(d+1)(b+1)}{8(1-h)eda}.
\end{aligned}
\]
\begin{figure}[ht]
    \centering
    \scalebox{1.4}{
    \begin{tikzpicture}[every label/.append style={font=\scriptsize, scale=0.7}]
            \def\r{2};
            \fill[black!5] (0,0) circle (1*\r cm);
            
            \coordinate[label=below:  $u_1^\partial$] (u1) at ( -100:\r);
            \coordinate[label=left:   $u_2^\partial$] (u2) at (-135:\r);
            \coordinate[label=left:   $u_3^\partial$] (u3) at (-170:\r);
            \coordinate[label=left:   $u_4^\partial$] (u4) at (156:\r);
            \coordinate[label=above:  $u_5^\partial$] (u5) at (120:\r);
            \coordinate[label=above right: $u_6^\partial$] (u6) at (60:\r);
            \coordinate[label=right:  $u_7^\partial$] (u7) at (10:\r);
            \coordinate[label=below: $u_8^\partial$] (u8) at ( -45:\r);
            
            \coordinate[wvert] (w01_1) at ( -110:0.7*\r);
            \coordinate[wvert] (w01_4) at ( -135:0.45*\r);
            \coordinate[bvert] (w01_2) at (  -105:0.45*\r);
            \coordinate[bvert] (w01_3) at ( -130:0.7*\r);
            
            \draw[] (w01_1) -- node[left]{} (w01_3);
            \draw[] (w01_1) -- node[left]{} (w01_2);
            \draw[] (w01_3) -- node[left]{} (w01_4);
            \draw[] (w01_4) -- node[left]{} (w01_2);

            \coordinate[wvert] (w02_1) at ( -10:0.7*\r);
            \coordinate[wvert] (w02_4) at (-35:0.45*\r);
            \coordinate[bvert] (w02_2) at ( -05:0.45*\r);
            \coordinate[bvert] (w02_3) at ( -30:0.7*\r);
            
            \draw[] (w02_1) -- node[left]{} (w02_3);
            \draw[] (w02_1) -- node[left]{} (w02_2);
            \draw[] (w02_3) -- node[left]{} (w02_4);
            \draw[] (w02_4) -- node[left]{} (w02_2);

            \coordinate[wvert] (w03_4) at (55:0.2*\r);
            \coordinate[wvert] (w03_1) at (107:0.4*\r);
            \coordinate[bvert] (w03_3) at ( 73:0.4*\r);
            \coordinate[bvert] (w03_2) at ( 125:0.2*\r);
            
            \draw[] (w03_1) -- node[left]{} (w03_3);
            \draw[] (w03_1) -- node[left]{} (w03_2);
            \draw[] (w03_3) -- node[left]{} (w03_4);
            \draw[] (w03_4) -- node[left]{} (w03_2);

            \coordinate[bvert] (w34_3) at ( 150: 0.8 * \r); 
            \coordinate[wvert] (w34_4) at ( 157: 0.58 * \r); 

            \coordinate[bvert] (w34_1) at ( 135: 0.54 * \r);
            \coordinate[wvert] (w34_2) at ( 135: 0.78 * \r); 
            
            \draw[] (w34_1) -- node[left]{} (w34_2);
            \draw[] (w34_2) -- node[left]{} (w34_3);
            \draw[] (w34_3) -- node[left]{} (w34_4);
            \draw[] (w34_4) -- node[left]{} (w34_1);

            \draw[] (w01_1) -- node[left]{} (u1);
            \draw[] (w01_2) -- node[left]{} (w02_4);
            \draw[] (w02_3) -- node[left]{} (u8);
            \draw[] (w02_1) -- node[left]{} (u7);
            \draw[] (w02_2) -- node[left]{} (w03_4);
            \draw[] (w03_3) -- node[left]{} (u6);
            \draw[] (w03_1) -- node[left]{} (w34_1);
            \draw[] (w34_2) -- node[left]{} (u5);
            \draw[] (w34_3) -- node[left]{} (u4);
            \draw[] (w34_4) -- node[left]{} (u3);
            \draw[] (w01_3) -- node[left]{} (u2);
            \draw[] (w01_4) -- node[left]{} (w03_2);

            \node[scale=0.5, blue] at (0, -0.25) {$1347$};
            \node[scale=0.5, blue] at (-0.55, -0.97) {$2347$};
            \node[scale=0.5, blue] at (-0.7, -1.6) {$2346$};
            \node[scale=0.5, blue] at (-1.2, -0.4) {$3467$};
            \node[scale=0.5, blue] at (0.4, -1.4) {$1234$};
            \node[scale=0.5, blue] at (1.6, -0.7) {$1238$};
            \node[scale=0.5, blue] at (1.05, -0.4) {$1348$};
            \node[scale=0.5, blue] at (0, 0.55) {$1367$};
            \node[scale=0.5, blue] at (1, 0.55) {$1378$};
            \node[scale=0.5, blue] at (0, 1.55) {$3678$};
            \node[scale=0.5, blue] at (-1.07, 0.80) {$4678$};
            \node[scale=0.5, blue] at (-1.6, 0.40) {$4567$};
            \node[scale=0.5, blue] at (-1.4, 1.15) {$5678$};
            \end{tikzpicture}}
    \caption{The target labels ${\bm T}(f)$ for $f \in  F (G^{\square})$.}
    \label{fig:n=4Ising-2}
\end{figure}
\noindent We have now computed the map $\bm H_G \circ \hat{\tau}^{-1}_{\mathscr{M}_\pi}(\tilde{M})$. To recover the edge weights in \cref{fig:n=4Ising}(c) from the C-variables, we compute $q_G \circ \bm H_G \circ \hat{\tau}^{-1}_{\mathscr{M}_\pi} (\tilde{M})$ to get:
\[
\resizebox{\textwidth}{!}{$
\begin{aligned}
    a &= \sqrt{\frac{C_{b^{\partial}_2} C_{b^{\partial}_3}}{C_{b^{\partial}_2} C_{b^{\partial}_3} + C_{f_1} C_{f_2}}},
    \quad 
    c &= \sqrt{\frac{C_{b_0} C_{b^{\partial}_3}}{C_{b_0} C_{b^{\partial}_3} + C_{f_1} C_{f_3}}},
    \quad 
    e &= \sqrt{\frac{C_{b_0} C_{b^{\partial}_1}}{C_{b_0} C_{b^{\partial}_1} + C_{f_0} C_{f_1}}}, 
    \quad
    g &= \sqrt{\frac{C_{b_0} C_{b^{\partial}_4}}{C_{b_0} C_{b^{\partial}_4} + C_{f_0} C_{f_3}}},\\
    b &= \sqrt{\frac{C_{f_1} C_{f_2}}{C_{b^{\partial}_2} C_{b^{\partial}_3} + C_{f_1} C_{f_2}}}, 
    \quad
    d &= \sqrt{\frac{C_{f_1} C_{f_3}}{C_{b_0} C_{b^{\partial}_3} + C_{f_1} C_{f_3}}}, 
    \quad
    f &= \sqrt{\frac{C_{f_0} C_{f_2}}{C_{b_0} C_{b^{\partial}_1} + C_{f_0} C_{f_1}}},
    \quad 
    h &= \sqrt{\frac{C_{f_0} C_{f_3}}{C_{b_0} C_{b^{\partial}_4} + C_{f_0} C_{f_3}}}.
\end{aligned}$}
\]
The coupling constants $J_{01}, J_{02},J_{03}$ and $J_{34}$ are then obtained as follows:
\[
J_{01} = \frac{1}{2} {\rm arctanh}(e),  \quad  
J_{04} = \frac{1}{2} {\rm arctanh}(g),  \quad 
J_{03} =\frac{1}{2} {\rm arctanh}(c),  \quad 
\text{and} \quad
J_{23} = \frac{1}{2} {\rm arctanh}(a). 
\]
\end{example}

\bibliographystyle{acm}
\bibliography{references}

\end{document}